\colorlet{mdtRed}{red!50!black}
\definecolor{dblue}{rgb}{0,0,.6}
\numberwithin{equation}{section}
\newtheorem{theorem}[equation]{Theorem}
\newtheorem{corollary}[equation]{Corollary}
\newtheorem{lemma}[equation]{Lemma}
\newtheorem{proposition}[equation]{Proposition}
\newtheorem{definition}[equation]{Definition}
\newtheorem*{theorem*}{Theorem}
\newtheorem*{corollary*}{Corollary}
\newtheorem*{proposition*}{Proposition}
\theoremstyle{remark}
\def\subsection{
	\refstepcounter{equation}
	\noindent {\bf \arabic{section}.\arabic{equation}.}
}
\newcommand{\Z}{\mathbb{Z}}
\newcommand{\R}{\mathbb{R}}
\newcommand{\mf}[1]{\mathfrak{#1}}
\newcommand{\ms}[1]{\mathscr{#1}}
\newcommand{\mb}[1]{\mathbb{#1}}
\newcommand{\mc}[1]{\mathcal{#1}}
\renewcommand{\t}[1]{\tilde{#1}}
\begin{document}

\title[Nef Cones of some Quot Schemes]{Nef cones of some Quot schemes on a Smooth Projective Curve}

\author[C. Gangopadhyay]{Chandranandan Gangopadhyay} 

\address{Department of Mathematics, Indian Institute of Technology Bombay, Powai, Mumbai 400076, Maharashtra, India.} 

\email{chandra@math.iitb.ac.in} 

\author[R. Sebastian]{Ronnie Sebastian} 

\address{Department of Mathematics, Indian Institute of Technology Bombay, Powai, Mumbai 400076, Maharashtra, India.} 

\email{ronnie@math.iitb.ac.in} 

\subjclass[2010]{14C05,14C20, 14C22,14E30, 14J10, 14J60}

\keywords{ Quot Scheme, Nef cone, Neron-Severi space}

\maketitle

\begin{abstract}
 Let
$C$ be a smooth projective curve over $\mb C$. Let $n,d\geq 1$. Let 
$\mc Q$ be the Quot scheme parameterizing torsion quotients of the vector bundle $\mc O^n_C$ of degree $d$. In this article we study the nef cone of $\mc Q$. We give a complete description of the nef cone in the case of elliptic curves. We compute it in the case when $d=2$  and $C$ very general, in terms of the nef cone of the second symmetric product of $C$. In the case when $n\geq d$ and $C$ very general, we give upper and lower bounds for the Nef cone. In general, we give a necessary and sufficient criterion for a divisor on $\mc Q$ to be nef.
\end{abstract}

\section{Introduction}
Throughout this article we assume that 
the base field to be $\mb C$.
Let $X$ be a smooth projective variety and let $N^1(X)$ 
be the $\mb R$-vector space of $\mb R$-divisors modulo 
numerical equivalence. It is known that $N^1(X)$ is a 
finite dimensional vector space. The closed cone 
${\rm Nef}(X)\subset N^1(X)$ is the cone of all 
$\mb R$-divisors whose intersection product with 
any curve in $X$ is non-negative. 
It has been an interesting problem  to compute 
${\rm Nef}(X)$. For example, when $X=\mb P(E)$ where 
$E$ is a semistable vector bundle over a smooth 
projective curve, Miyaoka computed the 
${\rm Nef}(X)$ in \cite{Miyaoka}. In \cite{Biswas-Param}, 
${\rm Nef}(X)$ was computed in the case 
when $X$ is the Grassmann bundle  associated 
to a vector bundle $E$ on a smooth projective 
curve $C$, in terms of the Harder Narasimhan filtration of $E$. 
Let $C^{(d)}$ denote the $d$th symmetric product.
In \cite{Pa}, the author computed the 
${\rm Nef}(C^{(d)})$ in the case when $C$ is a 
very general curve of even genus and $d={\rm gon}(C)-1$. 
In \cite{Kouv-1993} ${\rm Nef}(C^{(2)})$ is computed in the 
case  when $C$ is very general and $g$ is a perfect square. 
In \cite{Ciro-Kouv} ${\rm Nef}(C^{(2)})$ was computed assuming 
the Nagata conjecture. We refer the reader to 
\cite[Section 1.5]{Laz} for more such examples and details.

The reader is referred to \cite{FGAex} for the definition and details
on Quot schemes. 
Let $E$ be a vector bundle over a smooth projective curve $C$. 
Fix a polynomial $P\in \mb Q[t]$. Let $\mc Q(E,P)$ denote 
the Quot scheme parametrizing quotients of $E$ with Hilbert 
polynomial $P$. 
In \cite{Str}, when $C=\mb P^1$, 
the quot scheme $\mc Q(\mc O^n_{C},P)$ is 
studied as a natural compactification of the set of all 
maps from $C$ to some Grassmannians of a fixed degree. 
In this article we will consider the case when $P=d$ a constant, 
that is, when $\mc Q(E,d)$ parametrizes torsion quotients 
of $E$ of degree $d$. For notational convenience, we will 
denote $\mc Q(E,d)$ by 
$\mc Q$, when there is no possibility of confusion. 
It is known that $\mc Q$ is a smooth projective 
variety. Many properties of $\mc Q$ have been studied. 
In \cite{Bis-Dh-Hu} the Brauer group of 
$\mc Q(\mc O^n_C,d)$ is computed. 
In \cite{BGL}, the Betti cohomologies of $\mc Q(\mc O^n_C,d)$ 
are computed, $\mc Q(O^n_C,d)$ 
has been intepreted as the space of higher rank divisors of 
rank $n$, and an analogue of the Abel-Jacobi map was constructed. 
In \cite{G19}, the automorphism group scheme 
of $\mc Q(E,d)$ was computed in the case when either 
${\rm rk}~E\geq 3$ or $E$ is semistable and genus of $C$ 
satisfies $g(C)>1$.  
In  \cite{GS}, the $S$-fundamental group scheme of $\mc Q(E,d)$ was 
computed. 

In this article, we address the question of computing ${\rm Nef}(\mc Q)$. 
Recall that we have a map $\Phi:\mc Q\to C^{(d)}$ (a precise
definition can be found, for example, in \cite{GS}). 
For notational convenience, for a divisor $D\in N^1(C^{(d)})$ we will denote 
its pullback $\Phi^*D\in N^1(\mc Q)$ by
$D$, when there is no possibility of confusion.
The line bundle $\mc O_{\mc Q}(1)$ is defined in Definition \ref{def-O(1)}.
In Section \ref{section-nef-C^{(d)}} we recall the results
we need on ${\rm Nef}(C^{(d)})$. 
In Section \ref{section on picard group and neron-severi group of quot schemes} we compute 
${\rm Pic}(\mc Q)$. 
\begin{theorem*}[Theorem \ref{thm1}]
	${\rm Pic}(\mathcal{Q})=\Phi^*{\rm Pic}(C^{(d)})\bigoplus \Z[\mathcal{O}_{\mathcal{Q}}(1)] $\,.
\end{theorem*}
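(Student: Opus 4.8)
The plan is to exploit the morphism $\Phi:\mc Q\to C^{(d)}$ together with the smoothness of $\mc Q$, reducing the computation to an analysis of the generic fibre of $\Phi$. First I would record the basic geometry of $\Phi$: over a point of $C^{(d)}$ corresponding to a divisor $\sum_i m_i x_i$ the fibre is the product over the $x_i$ of the punctual Quot schemes of length-$m_i$ quotients of $\mc O^n$, and each such punctual Quot scheme is irreducible of dimension $(n-1)m_i$. Hence every fibre of $\Phi$ is integral of the same dimension $(n-1)d$, so $\Phi$ is equidimensional (indeed flat, by miracle flatness, since source and target are smooth), its fibres are geometrically connected and reduced, and therefore $\Phi_*\mc O_{\mc Q}=\mc O_{C^{(d)}}$. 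In particular $\Phi^*$ is injective on Picard groups, and over the open locus $U\subset C^{(d)}$ of reduced divisors the fibre is $(\mb P^{n-1})^d$, with Picard group $\Z^d$.

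Next I would analyze the restriction homomorphism $r:{\rm Pic}(\mc Q)\to {\rm Pic}(\mc Q_\eta)$ to the generic fibre $\mc Q_\eta$ over the generic point $\eta$ of $C^{(d)}$, with residue field $K=\mb C(C^{(d)})$. Because $\Phi$ is equidimensional, any prime divisor of $\mc Q$ dominates either $C^{(d)}$ or a prime divisor of $C^{(d)}$; and because each fibre over the generic point of a prime divisor of $C^{(d)}$ is integral, every vertical prime divisor $Z$ equals $\Phi^{-1}(\Delta)$ for a prime divisor $\Delta\subset C^{(d)}$, hence represents $\Phi^*[\Delta]$. By the localization sequence for class groups (using that $\mc Q$ and $\mc Q_\eta$ are smooth, so $\mathrm{Cl}={\rm Pic}$) the kernel of $r$ is precisely the subgroup generated by these vertical primes, which is exactly $\Phi^*{\rm Pic}(C^{(d)})$, and $r$ is surjective onto ${\rm Pic}(\mc Q_\eta)$. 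The only prime divisor of $C^{(d)}$ over whose generic point the fibre degenerates is the big diagonal, where the fibre is the punctual length-$2$ Quot scheme times a product of projective spaces; so the single geometric input driving this step is the irreducibility of that punctual Quot scheme, which I would cite or verify directly.

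It then remains to compute ${\rm Pic}(\mc Q_\eta)$. After base change to $\bar\eta={\rm Spec}\,\bar K$ the fibre becomes $(\mb P^{n-1}_{\bar K})^d$, with Picard group $\Z^d$ generated by the hyperplane classes of the $d$ factors, and the Galois group acts through the permutation action of $S_d$ on these factors (already visible over the Galois extension $\mb C(C^d)\supset K$ with group $S_d$). Hilbert's Theorem $90$ then yields an injection ${\rm Pic}(\mc Q_\eta)\hookrightarrow (\Z^d)^{S_d}=\Z\cdot(1,\dots,1)$. By Definition \ref{def-O(1)}, a short direct check shows that $\mc O_{\mc Q}(1)$ restricts on a fibre $(\mb P^{n-1})^d$ to $\mc O(1,\dots,1)$, i.e. to the generator $(1,\dots,1)$; hence the subgroup ${\rm Pic}(\mc Q_\eta)\subseteq\Z$ contains a generator and so equals $\Z$, generated by $r(\mc O_{\mc Q}(1))$. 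Combining, the sequence $0\to \Phi^*{\rm Pic}(C^{(d)})\to {\rm Pic}(\mc Q)\xrightarrow{r}\Z\to 0$ is exact and split by $\mc O_{\mc Q}(1)$, which is the asserted decomposition. I expect the main obstacle to be the integrality of the fibres over generic points of divisors in $C^{(d)}$ --- equivalently the irreducibility of the punctual length-$2$ Quot scheme --- since this is exactly what prevents extra ``boundary'' classes from appearing in $\ker r$ and thereby enlarging ${\rm Pic}(\mc Q)$ beyond the stated rank.
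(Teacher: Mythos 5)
Your argument is correct, and it takes a genuinely different route from the paper's. The paper proves Lemma \ref{generically the line bundle is O(n)} by an explicit specialization argument over configuration space: restricting an arbitrary $\mc L$ to the product fibres $\prod_i\mb P(E_{c_i})$ over the complement of the big diagonal, it moves the points $c_i$ one at a time to show the multidegree $(n_1,\dots,n_d)$ is constant and $S_d$-symmetric; it then twists down by $\mc O_{\mc Q}(n)$ and invokes the descent lemma of Mehta--Ramanathan \cite[Lemma 2.1.2]{MR-ss}, which uses flatness of $\Phi$ and integrality of \emph{all} its fibres (both quoted from \cite{GS}). You replace both steps: the constancy-and-symmetry of the multidegree is absorbed into the Galois-descent computation ${\rm Pic}(\mc Q_\eta)\hookrightarrow(\Z^d)^{S_d}=\Z$ (Hilbert 90 plus the transitive $S_d$-monodromy of the extension $\C(C^d)\supset\C(C^{(d)})$ --- this is exactly the paper's coordinate-permuting computation, packaged cohomologically), while the Mehta--Ramanathan lemma is replaced by the localization sequence for class groups, which needs ${\rm Cl}={\rm Pic}$ (smoothness of $\mc Q$) and integrality of fibres only over the codimension-one points of $C^{(d)}$, in effect only over the generic point of the big diagonal; that amounts to irreducibility and generic reducedness of the length-$2$ punctual Quot scheme, which you correctly isolate as the one nontrivial geometric input and which is covered by the same references \cite[Corollaries 6.3, 6.4, 6.6]{GS} the paper relies on. Your identification of the restriction of $\mc O_{\mc Q}(1)$ to a fibre with $\mc O(1,\dots,1)$ is the same computation as the paper's $\tilde{\psi}^{*}\mc O_{\mc Q}(1)=\bigotimes_i p^*_i\mc O_{\mb P(E)}(1)$, obtained via Lemma \ref{pullback-lemma}. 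As for trade-offs: your route additionally needs $\mc Q$ smooth and the fibre-dimension count (to get flatness by miracle flatness, though one can simply quote flatness from \cite{GS}), but it localizes the integrality hypothesis to a single divisor, avoids the see-saw-type descent lemma, and yields the finer structural statement that $0\to\Phi^*{\rm Pic}(C^{(d)})\to{\rm Pic}(\mc Q)\xrightarrow{r}{\rm Pic}(\mc Q_\eta)\to 0$ is exact and split by $\mc O_{\mc Q}(1)$; the paper's argument is more elementary and works for arbitrary locally free $E$ without change, exactly as stated in its Theorem \ref{thm1}.
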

As a corollary (Corollary \ref{neron-severi of quot scheme}) we get that
$N^1(\mc Q)\cong N^1(C^{(d)})\oplus \mb R[\mc O_{\mc Q}(1)]$.
The computation of $N^1(\mc Q)$ can also be found in \cite{Bis-Dh-Hu}.
As a result, when $C\cong \mb P^1$, since $C^{(d)}\cong \mb P^d$, 
we have that the $N^1(\mc Q)$ is $2$-dimensional and we 
prove that its nef cone is given as follows.
\begin{theorem*}[Theorem \ref{nef cone of quot schemes over projective line}]
	Let $C=\mb P^1$. Let $E=\bigoplus\limits^k_{i=1} \mc O(a_i)$ with $a_i\leq a_j$ for $i<j$. 
	Let $d\geq 1$.
	Then 
\begin{align*}	
	{\rm Nef}(\mc Q(E,d)) = &  \mb R_{\geq 0}([\mc O_{\mc Q(E,d)}(1)]+(-a_1+d-1)[\mc O_{\mb P^d}(1)])+\mb R_{\geq 0}[\mc O_{\mb P^d}(1)]\,.
\end{align*}
\end{theorem*}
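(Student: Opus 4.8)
The plan is to work with the two dimensional space $N^1(\mc Q)=\mb R\xi\oplus\mb R h$ furnished by Corollary \ref{neron-severi of quot scheme}, where $\xi:=[\mc O_{\mc Q}(1)]$ and $h:=[\Phi^*\mc O_{\mb P^d}(1)]$. Since ${\rm Nef}(\mc Q)$ is a closed two dimensional cone it is enough to pin down its two extremal rays, and I would do this by exhibiting two effective curves that bound it from outside together with a proof that the two proposed generators are themselves nef. Throughout I assume $k:={\rm rk}\,E\geq 2$ (when $k=1$ one has $\mc Q\cong\mb P^d$ and the asserted cone collapses to $\mb R_{\geq 0}h$, which is immediate).

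First I would produce two test curves. Let $f$ be a line in one $\mb P^{k-1}$-factor of a fibre of $\Phi$ over a reduced divisor $p_1+\dots+p_d$; it is contracted by $\Phi$, so $h\cdot f=0$, while by Definition \ref{def-O(1)} the bundle $\mc O_{\mc Q}(1)$ restricts on such a fibre $(\mb P^{k-1})^d$ to $\mc O(1,\dots,1)$, whence $\xi\cdot f=1$. Second, fix a base-point-free pencil of degree $d$ on $C$, that is a finite morphism $\psi\colon C\to\mb P^1$ of degree $d$, and on its graph $\Gamma_\psi$ put the minimal quotient line bundle $E\twoheadrightarrow\mc O(a_1)$ given by the projection onto a summand of least degree; this is a flat family of length-$d$ torsion quotients over $\mb P^1$ and so defines a curve $\beta\subset\mc Q$. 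As the pencil is a two dimensional linear system of degree-$d$ forms, $\Phi$ carries $\beta$ isomorphically onto a line in $C^{(d)}=\mb P^d$, giving $h\cdot\beta=1$, and Riemann–Roch for the degree-$d$ map $\psi$ gives $\xi\cdot\beta=a_1+1-d$. Writing $D=x\xi+yh$, effectivity of $f$ and $\beta$ forces $D\cdot f=x\geq 0$ and $D\cdot\beta=(a_1+1-d)x+y\geq 0$ for every nef $D$, which is exactly the inclusion of ${\rm Nef}(\mc Q)$ in the asserted cone.

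It remains to prove the reverse inclusion, namely that the two generators are nef. That $h$ is nef is clear, being the pullback of the ample class $\mc O_{\mb P^d}(1)$ under the morphism $\Phi$. The essential point is the nefness of $L:=\xi+(d-1-a_1)h$. I would identify $L$ with $\det V$ for the rank-$d$ bundle $V:=\pi_{\mc Q*}\big(\mc T\otimes\pi_C^*\mc O_C(d-1-a_1)\big)$, where $0\to\mc S\to\pi_C^*E\to\mc T\to 0$ is the universal sequence on $\mc Q\times C$; the equality $[\det V]=L$ in $N^1(\mc Q)$ can be verified numerically by restricting both sides to $f$ and to $\beta$, where they have degrees $1$ and $0$. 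To see $\det V$ is nef I would show $V$ is globally generated. Pushing forward the twisted universal surjection gives an $\mc O_{\mc Q}$-linear map $H^0(C,E\otimes\mc O_C(d-1-a_1))\otimes\mc O_{\mc Q}\to V$, and over a point $[0\to S\to E\to T\to 0]$ its surjectivity onto the fibre amounts to surjectivity of $H^0(C,E(d-1-a_1))\to H^0(C,T(d-1-a_1))$, which holds once $H^1(C,S\otimes\mc O_C(d-1-a_1))=0$. Since $T$ has length $d$ we have $E\otimes\mc O_C(-D)\subseteq S$ for its support divisor $D$ of degree $d$, and on $\mb P^1$ every such $D$ satisfies $\mc O_C(-D)\cong\mc O_C(-d)$; hence $H^1(C,S(d-1-a_1))$ is a quotient of $H^1\big(\bigoplus_i\mc O(a_i-a_1-1)\big)$, which vanishes because $a_i\geq a_1$ for all $i$. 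Thus $V$, and therefore $\det V=L$, is globally generated, so $L$ is nef, giving $\mb R_{\geq 0}L+\mb R_{\geq 0}h\subseteq{\rm Nef}(\mc Q)$.

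Combining the two inclusions yields ${\rm Nef}(\mc Q)=\mb R_{\geq 0}L+\mb R_{\geq 0}h$, the asserted cone. I expect the nefness of $L$ to be the main obstacle: the curves $f$ and $\beta$ only bound the nef cone from outside, and a priori it could be strictly smaller. The uniform vanishing $H^1(C,S\otimes\mc O_C(d-1-a_1))=0$ over all of $\mc Q$ is what closes the gap, and it works precisely because every effective divisor of degree $d$ on $\mb P^1$ is linearly equivalent to $d$ times a point, so that the splitting type of $E(-D)$ does not depend on $D$.
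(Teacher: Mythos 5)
Your proposal is correct, and although it shares the paper's skeleton (the two-dimensional $N^1(\mc Q)$ from Corollary \ref{neron-severi of quot scheme}, with the extremal class realized as the determinant of the rank-$d$ pushforward $V=p_{\mc Q*}(\mc B_{\mc Q}\otimes p_C^*\mc O(d-1-a_1))$, i.e.\ the paper's $B_{L,\mc Q(E,d)}$), the execution differs at both ends. The paper reduces to the trivial bundle: global generation of $E(-a_1)$ induces a morphism $\mc Q(E,d)\to\mc Q(n,d)$ with $n=h^0(E(-a_1))$, under which (Lemma \ref{lb-lemma}) this class pulls back from $B_{\mc O(d-1),\mc Q(n,d)}$, whose nefness is proved via Lemmas \ref{B_L.D geq 0} and \ref{line bundles restricted to a divisor} (a section of the determinant non-vanishing at any prescribed point); non-ampleness is obtained by showing $\eta^*B_{\mc O(d-1),\mc Q(n,d)}$ is trivial using the Koszul resolution of $\mc O_\Sigma$ on $\mb P^1\times\mb P W^*$, and the conversion to $[\mc O_{\mc Q}(1)]+(d-1-a_1)[\mc O_{\mb P^d}(1)]$ runs through Proposition \ref{difference-B_L} and $[x]=[\mc O_{\mb P^d}(1)]$. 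You instead (i) bound the cone from outside by two explicit curves --- note your $\beta$ is exactly the $\eta$-image of a pencil line, so your computation $\xi\cdot\beta=a_1+1-d$ via $\deg\psi_*\mc O_{\mb P^1}(a_1)$ is the curve-level counterpart of the paper's triviality of $\eta^*B$, with the bonus that $f$ and $\beta$ generate $\overline{NE}(\mc Q)$ --- and (ii) prove nefness directly on $\mc Q(E,d)$ by showing the whole bundle $V$ is globally generated, via the uniform vanishing $H^1(S(d-1-a_1))=0$ coming from $E\otimes\mc O(-D)\subseteq S$ and $\mc O_{\mb P^1}(-D)\cong\mc O(-d)$. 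This is stronger than producing a non-vanishing section of $\det V$ at each point, avoids the auxiliary Quot scheme $\mc Q(n,d)$ entirely, and your numerical identification of $[\det V]$ by pairing against $f$ and $\beta$ (legitimate, since that pairing matrix is unipotent, hence non-degenerate) sidesteps the G\"ottsche-type computation in Proposition \ref{difference-B_L}.

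Two small imprecisions, neither fatal. First, ``its support divisor $D$ of degree $d$'' must mean the degree-$d$ divisor cut out by the $0$-th Fitting ideal of $T$ (equivalently, the image $\Phi(q)$), not the reduced support: for $T=\mc O_C/\mf m_c^2$ one has $S=E(-2c)$ and $E(-c)\not\subseteq S$. Since you insist $\deg D=d$, your intended divisor is the right one, and the containment $E\otimes\mc O(-D)\subseteq S$ is the standard fact the paper records in Section 5 as $\mc I E\subset h(E')$. Second, in your $k=1$ aside the asserted cone collapses to $\mb R_{\geq 0}h$ only when $a_1(d-1)\geq 0$, since there $[\mc O_{\mc Q}(1)]=(a_1d-d+1)h$; as with the paper itself, the theorem should be read with ${\rm rk}\,E\geq 2$, where the fibres of $\Phi$ are positive-dimensional and $N^1(\mc Q)$ is honestly two-dimensional.
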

\noindent
Note that this theorem was already known in the case
when $E=V\otimes \mc O_{\mb P^1}$, for a vector 
space $V$ over $k$ (\cite[Theorem 6.2]{Str}). 

For the rest of the introduction, we will assume 
$E=V \otimes \mc O_C$ with ${\rm dim}_kV=n$ and denote 
by $\mc Q=\mc Q(n,d)$ the Quot scheme $\mc Q(E,d)$. Let 
us consider the case $g=1$. In this case, $N^1(\mc Q)$ is 
three-dimensional (see Proposition \ref{dim-N^1}), 
and we prove that its nef cone is given as follows
(see Definition
\ref{three divisors of symmetric product of curves} for notations).

\begin{theorem*}[Theorem \ref{O(1)+Delta/2 is nef for elliptic curves}]
Let $g=1$, $n\geq 1$ and $\mc Q=\mc Q(n,d)$.  
	Then the class $[\mc O_{\mc Q}(1)]+[\Delta_d/2]\in N^1(\mc Q)$ is nef.
	Moreover, 
	$${\rm Nef}(\mc Q)= \mb R_{\geq 0}([\mc O_{\mc Q}(1)]+[\Delta_d/2])+
	\mb R_{\geq 0}[\theta_d]+\mb R_{\geq 0}[\Delta_d/2]\,.$$
\end{theorem*}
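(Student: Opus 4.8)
The plan is to fix coordinates and prove the two inclusions separately. By Corollary~\ref{neron-severi of quot scheme} together with the genus one description of $N^1(C^{(d)})$ recalled in Section~\ref{section-nef-C^{(d)}}, every class in $N^1(\mc Q)$ is uniquely $D=a[\mc O_{\mc Q}(1)]+b[\theta_d]+c[\Delta_d/2]$ with $a,b,c\in\mb R$; this uses Proposition~\ref{dim-N^1} ($\dim N^1(\mc Q)=3$). The asserted cone is $\sigma:=\{\,a\ge 0,\ b\ge 0,\ c\ge a\,\}$, so it suffices to show $\sigma\subseteq{\rm Nef}(\mc Q)$ and ${\rm Nef}(\mc Q)\subseteq\sigma$. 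The two generators $[\theta_d]$ and $[\Delta_d/2]$ are $\Phi$-pullbacks of the two generators of ${\rm Nef}(C^{(d)})$ in the genus one case (Section~\ref{section-nef-C^{(d)}}), hence are nef; the whole point is the third generator $[\mc O_{\mc Q}(1)]+[\Delta_d/2]$.

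For the nefness of $[\mc O_{\mc Q}(1)]+[\Delta_d/2]$, let $B\subset\mc Q$ be an irreducible curve. If $\Phi$ contracts $B$ then $\theta_d\cdot B=\Delta_d\cdot B=0$, and since $\mc O_{\mc Q}(1)$ (Definition~\ref{def-O(1)}) is $\Phi$-relatively ample we get $(\mc O_{\mc Q}(1)+\Delta_d/2)\cdot B=\mc O_{\mc Q}(1)\cdot B>0$. Otherwise $\Phi|_B$ is finite onto its image, and I would restrict the universal sequence $0\to\mc S\to\mc O^n_{\mc Q\times C}\to\mc T\to 0$ to $B\times C$. Writing $p$ for the projection to $B$ and $Z={\rm Supp}(\mc T)$, by Definition~\ref{def-O(1)} one has $\mc O_{\mc Q}(1)|_B=\det p_*\mc T$, and multiplicativity of the determinant of cohomology together with the triviality of $\det Rp_*\mc O^n_{B\times C}$ gives $\mc O_{\mc Q}(1)\cdot B=-\deg\det Rp_*\mc S$. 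Because $g=1$ forces ${\rm Td}(C)=1$, a Grothendieck--Riemann--Roch computation simplifies this to $\mc O_{\mc Q}(1)\cdot B=-\int_{B\times C}\ch_2(\mc S)=\int_{B\times C}c_2(\mc S)-\tfrac12\,Z^2$, where $Z^2$ is the self-intersection in the surface $B\times C$. On the other hand, by the definition of $\Delta_d$ (Definition~\ref{three divisors of symmetric product of curves}) and the projection formula, $\Delta_d\cdot\Phi_*B$ is the degree of the branch divisor of $Z\to B$, which for $g=1$ equals $Z^2$ by Riemann--Hurwitz and adjunction (here $\deg T_C=0$ is used). Hence $(\mc O_{\mc Q}(1)+\Delta_d/2)\cdot B=\int_{B\times C}c_2(\mc S)$, and nefness reduces to the single positivity statement $\int_{B\times C}c_2(\mc S)\ge 0$ for the kernel family $\mc S$.

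For ${\rm Nef}(\mc Q)\subseteq\sigma$ I would exhibit three curves. Let $\gamma_1$ be a line inside one $\mb P^{n-1}$-factor of a fibre of $\Phi$ over a reduced divisor; then $\theta_d\cdot\gamma_1=\Delta_d\cdot\gamma_1=0$ and $\mc O_{\mc Q}(1)\cdot\gamma_1=1$, so every nef $D$ satisfies $a\ge 0$. Let $C_0,f$ be extremal curves for the two facets of ${\rm Nef}(C^{(d)})$, with $(\Delta_d/2)\cdot C_0=0<\theta_d\cdot C_0$ and $\theta_d\cdot f=0<\delta_f:=\Delta_d\cdot f$; since $\Delta_d$ is effective, $C_0$ is disjoint from the diagonal and $f$ lies in one fibre $|L|$ of $C^{(d)}\to{\rm Pic}^d(C)$. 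Lifting $C_0$ and $f$ to $\mc Q$ by the family of quotients given by a single fixed hyperplane $H\subset k^n$ at all $d$ points makes $\mc T\cong\mc O_Z$, whence $\mc O_{\mc Q}(1)\cdot(\text{lift})=-\tfrac12 Z^2=-(\Delta_d/2)\cdot\Phi_*(\text{lift})$ by the computation above. The lift $\gamma_2$ of $C_0$ then has $\mc O_{\mc Q}(1)\cdot\gamma_2=0$, $\Delta_d\cdot\gamma_2=0$, $\theta_d\cdot\gamma_2>0$, forcing $b\ge 0$; the lift $\gamma_3$ of $f$ has $\mc O_{\mc Q}(1)\cdot\gamma_3=-\tfrac12\delta_f$, $\theta_d\cdot\gamma_3=0$, $\Delta_d\cdot\gamma_3=\delta_f$, so $D\cdot\gamma_3=\tfrac12\delta_f(c-a)\ge 0$ forces $c\ge a$. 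These three inequalities cut out precisely $\sigma$, giving ${\rm Nef}(\mc Q)\subseteq\sigma$ and, with the earlier inclusion, the theorem.

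I expect the \emph{main obstacle} to be the positivity $\int_{B\times C}c_2(\mc S)\ge 0$, equivalently the nefness of $\mc O_{\mc Q}(1)+\Delta_d/2$. The fixed-hyperplane lifts show it is sharp, since there $\mc T\cong\mc O_Z$ gives $c_2(\mc S)=0$. The mechanism I would use is that $\mc T$ is a quotient of the trivial bundle $\mc O^n$ restricted to $Z$, hence globally generated on $Z$; when $Z$ and $\mc T$ are smooth this identifies $\int_{B\times C}c_2(\mc S)$ with the non-negative degree of a globally generated sheaf on $Z$, and the genuine difficulty is to control the local contributions at the points where $Z$ is singular and $\mc S$ fails to be locally free. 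This is exactly the elliptic-curve instance of the general nef criterion for $\mc Q$, which I would invoke if it is available. Apart from this, the argument is Chern-class bookkeeping special to $g=1$ (via ${\rm Td}(C)=1$ and $\deg T_C=0$) together with a finite-dimensional convex-duality argument.
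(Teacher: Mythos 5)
Your upper-bound half is essentially the paper's own argument (Proposition \ref{ub-g=1}): your $\gamma_1$ is the line $[l]$ in a fibre of $\Phi$, and your fixed-hyperplane lifts $\gamma_2,\gamma_3$ are exactly $\eta_*[\delta]$ and $\eta_*[l']$, with the same numerics since $\eta^*\mc O_{\mc Q}(1)=\mc O(-\Delta_d/2)$; your GRR bookkeeping $(\mc O_{\mc Q}(1)+\Delta_d/2)\cdot B=\int_{B\times C}c_2(\mc S)$ at $g=1$ is also correct. Two small slips there: the extremal curve $C_0$ with $(\Delta_d/2)\cdot C_0=0$ is the small diagonal, which is \emph{contained} in $\Delta_d$, not disjoint from it (though for $g=1$ a translate curve $x\mapsto\sum_i(x+a_i)$ in the same numerical class is genuinely disjoint, so this is repairable); and your treatment of $\Phi$-contracted curves invokes relative ampleness of $\mc O_{\mc Q}(1)$, which is nowhere established in the paper and is more than you need --- nonnegativity on contracted curves follows from Lemmas \ref{B_L.D geq 0} and \ref{line bundles restricted to a divisor} since $[x]$ kills such curves.

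The genuine gap is the nefness of $[\mc O_{\mc Q}(1)]+[\Delta_d/2]$ itself, which you correctly identify as the crux but then do not prove: you reduce it to $\int_{B\times C}c_2(\mc S)\geq 0$ for the kernel family $\mc S$, concede that the global-generation heuristic only works where $Z$ and $\mc T$ are nice, declare the singular/non-locally-free locus ``the genuine difficulty,'' and propose to invoke ``the general nef criterion \ldots if it is available.'' That criterion is Theorem \ref{criterion-for-nefness}, i.e.\ the very content to be proved, so as written the argument is circular at its key step. The paper closes exactly this gap by a different mechanism, avoiding $c_2$ altogether: given $f:D\to\mc Q$, choose a generic section $\mc O_{C\times D}\to\mc B_D$, split off $\mc O_{\Gamma'}$, kill the zero-dimensional torsion of the cokernel (which can only \emph{decrease} degree, so singular points of the support help rather than hurt), and induct on $d$ (Lemma \ref{bound on O(1) in terms of diagonals}), obtaining $[\mc O_{\mc Q}(1)]\cdot[D]\geq[\mc O(-\Delta_{\mathbf d}/2)]\cdot[D]$ for some partition $\mathbf d\in\mc P^{\leq n}_d$ through which $D\to C^{(d)}$ factors; nefness then follows from the numerical identity $[-\Delta_{d_i}/2]=(1-\mu_0^{(d_i)})[\theta_{d_i}]-\mu_0^{(d_i)}[L_0^{(d_i)}]$, the superadditivity Lemma \ref{compare various L_0}, and the coincidence $\mu_0^{(d)}=1$ for all $d$ when $g=1$ (Proposition \ref{another lower bound for NEF cone}, where $[L_0]=[\Delta_d/2]$). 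If you want to complete your route, you would in effect have to reprove this degeneration-to-partitions argument to control $\int c_2(\mc S)$ when the support curve $Z$ breaks into components with multiplicities; your $c_2$ identity is a clean reformulation for $g=1$, but it does not by itself yield the positivity.
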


From now on assume that $g\geq 2$ and $C$ is very general.
See Definition \ref{def-alpha_t} for the definition of 
$t$ and $\alpha_t$. When $d=2$ we have the following result. 
\begin{theorem*}[Theorem \ref{cone-d=2}]
	Let $g\geq 2$ and $C$ be very general. Let $d=2$. Consider the Quot scheme $\mc Q=\mc Q(n,2)$. 
	Then $${\rm Nef}(\mc Q)=\mb R_{\geq 0}([\mc O_{\mc Q}(1)]+\dfrac{t+1}{g+t}[L_0])+
	\mb R_{\geq 0}[L_0]+\mb R_{\geq 0}[\alpha_t]\,.$$ 
\end{theorem*}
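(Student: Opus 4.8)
The plan is to work throughout in the decomposition $N^1(\mc Q)\cong\Phi^*N^1(C^{(2)})\oplus\mb R[\mc O_{\mc Q}(1)]$ furnished by Corollary \ref{neron-severi of quot scheme}, writing a general class as $\Phi^*D+s[\mc O_{\mc Q}(1)]$ with $D\in N^1(C^{(2)})$ and $s\in\mb R$, and to feed in the description ${\rm Nef}(C^{(2)})=\mb R_{\geq0}[L_0]+\mb R_{\geq0}[\alpha_t]$ recalled in Section \ref{section-nef-C^{(d)}}. Recall there that $L_0$ is the extremal nef ray orthogonal to the diagonal $\Delta_2$ and $\alpha_t$ the other one; dually, the Mori cone $\overline{NE}(C^{(2)})$ is spanned by the diagonal $\Delta_2$ (which has negative self-intersection for $g\geq2$, hence is extremal, and satisfies $L_0\cdot\Delta_2=0$) and by a maximal-slope curve $\Gamma_t$ with $\alpha_t\cdot\Gamma_t=0$. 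The claimed cone is simplicial with rays $g_1:=[\mc O_{\mc Q}(1)]+\tfrac{t+1}{g+t}[L_0]$, $[L_0]$ and $[\alpha_t]$, and I prove the two inclusions separately.

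For the inclusion $\supseteq$, the classes $\Phi^*L_0$ and $\Phi^*\alpha_t$ are nef as pullbacks of nef classes, so the crux is that $g_1$ is nef. A curve $\Gamma$ contracted by $\Phi$ lies in a fibre, on which $\mc O_{\mc Q}(1)$ is $\Phi$-ample (Definition \ref{def-O(1)}) while $\Phi^*L_0$ is trivial, so $g_1\cdot\Gamma>0$. For $\Gamma$ mapping finitely to $C^{(2)}$ I must establish the key inequality
\[
\mc O_{\mc Q}(1)\cdot\Gamma\ \geq\ -\frac{t+1}{g+t}\,L_0\cdot\Gamma.\qquad(\star)
\]
To evaluate the left-hand side I normalise $\Gamma$ by a curve $B$ and pull back the universal sequence $0\to\mc S\to\mc O_{B\times C}^{\,n}\to\mc T\to0$, with $\mc T$ flat over $B$ of fibrewise length $2$; then $\mc O_{\mc Q}(1)\cdot\Gamma=\deg\det R\pi_{B*}\mc T=-\deg\det R\pi_{B*}\mc S$, since $R\pi_{B*}\mc O^n$ has degree zero. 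Applying Grothendieck--Riemann--Roch to $\pi_B\colon B\times C\to B$ expresses this degree through $c_1(\mc S)^2$, $c_2(\mc S)$ and $\pi_C^*K_C$: the $c_1$-part is pinned down by the two numbers $x\cdot\Gamma$ and $\delta\cdot\Gamma$ (equivalently by $L_0\cdot\Gamma$ and $\alpha_t\cdot\Gamma$) through the support curve of $\mc T$, while the remaining discriminant term is $\geq0$ because each restriction $\mc O_{\{b\}\times C}^{\,n}$ is trivial, hence semistable of slope $0$, so $\mc S$ obeys a Bogomolov-type inequality uniformly in $n$. After this computation $(\star)$ follows by combining the nefness of $\alpha_t$ on $C^{(2)}$ with the non-negativity of that discriminant term; hence $g_1$ is nef.

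For the reverse inclusion $\subseteq$ I exhibit three effective curves dual to the three facets of the simplicial cone. A line $\ell$ in a general fibre $\mb P^{n-1}\times\mb P^{n-1}$ has $\Phi^*D\cdot\ell=0$ and $\mc O_{\mc Q}(1)\cdot\ell>0$; testing against $\ell$ forces $s\geq0$ and identifies the facet $s=0$ with $\Phi^*{\rm Nef}(C^{(2)})=\langle L_0,\alpha_t\rangle$. Over the diagonal $\Delta_2$ the fibre of $\Phi$ is the punctual Quot scheme, and choosing a section into the locus where $\pi_{B*}\mc T$ trivialises produces a lift $\gamma_3$ of $\Delta_2$ with $\mc O_{\mc Q}(1)\cdot\gamma_3=0$ and $L_0\cdot\gamma_3=0$; thus $\gamma_3$ is orthogonal to $g_1$ and $L_0$, with $\alpha_t\cdot\gamma_3>0$, supporting the facet $\langle g_1,L_0\rangle$. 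Finally, over the maximal-slope curve $\Gamma_t$ I construct a lift $\gamma_2$ realising equality in $(\star)$, i.e. $\mc O_{\mc Q}(1)\cdot\gamma_2=-\tfrac{t+1}{g+t}L_0\cdot\gamma_2$; since $\alpha_t\cdot\gamma_2=0$ and $L_0\cdot\gamma_2>0$, this $\gamma_2$ is orthogonal to $g_1$ and $\alpha_t$, supporting the facet $\langle g_1,\alpha_t\rangle$. As $\ell$, $\gamma_2$, $\gamma_3$ are effective and are the inward facet normals, every nef class pairs non-negatively with them, giving ${\rm Nef}(\mc Q)\subseteq\langle g_1,L_0,\alpha_t\rangle$.

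The hard part will be the sharp constant $\tfrac{t+1}{g+t}$ in $(\star)$: it requires both the exact Grothendieck--Riemann--Roch evaluation of $\mc O_{\mc Q}(1)$ on an arbitrary horizontal family (in particular genuine control of the section-dependent discriminant term, not merely its sign) and the precise extremal slope in ${\rm Nef}(C^{(2)})$ for very general $C$, where the integer $t$ of Definition \ref{def-alpha_t} enters. Concretely, the entire difficulty is concentrated in producing the family $\gamma_2$ over $\Gamma_t$ that attains equality --- a minimal-$\mc O_{\mc Q}(1)$-degree lift of the extremal curve --- and verifying that its degree matches the constant dictated by $\alpha_t$; once this equality case is in hand, the nefness of $g_1$ and the sharpness of the cone are delivered simultaneously.
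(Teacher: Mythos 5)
Your overall skeleton (inner bound via nefness of $g_1=[\mc O_{\mc Q}(1)]+\tfrac{t+1}{g+t}[L_0]$, outer bound via non-ampleness of the three facets) matches the paper, but both of your key steps have genuine gaps. First, the nefness of $g_1$: you propose to prove the inequality $(\star)$ by Grothendieck--Riemann--Roch plus a ``Bogomolov-type inequality'' for the kernel $\mc S\subset\mc O^n_{B\times C}$, justified by semistability of the ambient trivial bundle. That deduction is invalid: Bogomolov requires semistability of $\mc S$ itself, and the restriction $\mc S|_{\{b\}\times C}$ is a full-rank subsheaf of $\mc O^n_C$ of degree $-2$ which is typically unstable (e.g.\ $\mc O_C^{n-1}\oplus\mc O_C(-2x)$), so the discriminant term has no a priori sign. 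Moreover, even granting a sign, you never explain how the constant $\tfrac{t+1}{g+t}$ would emerge --- and it cannot emerge from a Chern-class computation, since $t$ encodes the boundary of ${\rm Nef}(C^{(2)})$, which for very general $C$ is transcendental input, not intersection theory on $B\times C$. The paper's nefness proof is instead elementary and uses the special structure of $d=2$: every length-$2$ quotient is of one of three types; the cyclic ones lie in the open set $U$ of Definition \ref{def-U}, and the remaining type $\mc O^n_C\to k_c\oplus k_c$ lies in $U'$ of Definition \ref{def-U'}, so $U\cup U'=\mc Q$. If $D$ meets $U$, a general constant section of $\mc O^n$ embeds $\mc O_\Gamma$ into $\mc B_D$ with finite cokernel (Lemma \ref{the open set U}), giving $([\mc O_{\mc Q}(1)]+[\Delta_2/2])\cdot[D]\geq 0$, and then one converts via the identity $[\Delta_2/2]=\tfrac{t+1}{g+t}[L_0]-\tfrac{g-1}{g+t}[\alpha_t]$ of \eqref{relation between L_0, Delta, alpha} and the nefness of $\alpha_t$ --- this identity is the only place $t$ enters. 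If $D$ misses $U$ then $D\subset U'$ and $[\mc O_{\mc Q}(1)]\cdot[D]={\rm deg}\,(p_D)_*\mc B_D\geq 0$ because $\mc O_D^n\to(p_D)_*\mc B_D$ is generically surjective (Lemma \ref{the open set V}). Your uniform GRR computation never isolates the locus of quotients $k_c\oplus k_c$, which is exactly where the cyclic-section argument fails and which forces this dichotomy.

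Second, and more fundamentally, your sharpness argument for the facet $\langle g_1,\alpha_t\rangle$ cannot work as stated: you plan to lift a ``maximal-slope curve $\Gamma_t$'' with $\alpha_t\cdot\Gamma_t=0$, but such a curve need not exist. For very general $C$ one expects $t=\sqrt g$ (proved when $g$ is a perfect square, conjectural for $g\geq 9$), which is irrational for non-square $g$; if an integral curve class were orthogonal to $\alpha_t$ then $t$ would be rational, so the extremal nef ray is in general a limit ray dual to no effective curve, and your equality-attaining lift $\gamma_2$ --- which you yourself flag as the entire difficulty --- is unobtainable. The paper sidesteps this with the section $\eta:C^{(2)}\to\mc Q$ of \eqref{section-HC}: since $\eta^*[\mc O_{\mc Q}(1)]=[-\Delta_2/2]$, one gets $\eta^*g_1=\tfrac{g-1}{g+t}[\alpha_t]$, so every convex combination of $g_1$ and $\alpha_t$ restricts along the embedding $\eta$ to a positive multiple of $\alpha_t$, which is nef but not ample; no extremal curve is needed. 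Combined with the curve $\tilde\delta$ of \eqref{t delta} (which kills the facet $\langle g_1,[L_0]\rangle$, since $[\mc O_{\mc Q}(1)]\cdot[\tilde\delta]=[L_0]\cdot[\delta]=0$ --- this is essentially your $\gamma_3$, which is fine) and the fiber line $l$ (for $\langle[L_0],[\alpha_t]\rangle$), the entire boundary of the candidate cone consists of non-ample classes, which forces it to equal ${\rm Nef}(\mc Q)$.
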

Precise values of $t$ are known for small genus. When $g\geq 9$
it is conjectured that $t=\sqrt{g}$. This is known when 
$g$ is a perfect square. The precise statements have been mentioned 
after Theorem \ref{cone-d=2}.

In general (without any assumptions on $n$ and $d$), 
we give a criterion for certain line bundle on $\mc Q$ to be nef
in terms of its pullback along certain natural maps from 
products $\prod_i C^{(d_i)}$, see subsection \ref{subsection-criterion-for-nefness}
for notation. 

\begin{theorem*}[Theorem \ref{criterion-for-nefness}]
	Let $\beta\in N^1(C^{(d)})$.
	Then the class $[\mc O_{\mc Q}(1)]+\beta \in N^1(\mc Q)$ is nef iff the class 
	$[\mc O(-\Delta_{\mathbf{d}}/2)]+\pi^*_{\mathbf{d}}\beta\in N^1(C^{(\mathbf{d})})$ 
	is nef for all $\mathbf{d}\in \mc P^{\leq n}_d$.
\end{theorem*}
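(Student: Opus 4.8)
The plan is to test the class $L:=[\mc O_{\mc Q}(1)]+\beta$ against curves by transporting the question to the symmetric products $C^{(\mathbf d)}$. I will use the morphisms $\psi_{\mathbf d}\colon C^{(\mathbf d)}\to\mc Q$ of Subsection \ref{subsection-criterion-for-nefness}, sending a tuple $(D_1,\dots,D_k)$ to the quotient $\mc O_C^n\twoheadrightarrow\bigoplus_{i=1}^{k}\mc O_{D_i}$ along the first $k$ coordinates, together with the two identities $\Phi\circ\psi_{\mathbf d}=\pi_{\mathbf d}$ and $\psi_{\mathbf d}^*\mc O_{\mc Q}(1)=\mc O(-\Delta_{\mathbf d}/2)$ coming from that construction. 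Granting these, the forward implication is immediate: if $L$ is nef then so is its pullback $\psi_{\mathbf d}^*L=[\mc O(-\Delta_{\mathbf d}/2)]+\pi_{\mathbf d}^*\beta$ for every $\mathbf d\in\mc P^{\leq n}_d$, since the pullback of a nef class along a morphism is nef.

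For the converse I would exploit the action of $G=\mathrm{GL}(V)$ on $\mc Q$ induced by its action on $\mc O_C^n=V\otimes\mc O_C$. Two features make this action decisive. First, $\Phi$ is $G$-invariant, because the support divisor of a torsion quotient is unchanged when the framing of $\mc O_C^n$ is altered; hence $\Phi\circ a=\pi_{\mathbf d}\circ\mathrm{pr}_{C^{(\mathbf d)}}$ for the twisted evaluation map $a\colon G\times C^{(\mathbf d)}\to\mc Q$, $(g,\underline D)\mapsto g\cdot\psi_{\mathbf d}(\underline D)$. Second, $\mathrm{Pic}(G)=0$, so $a^*\mc O_{\mc Q}(1)$ restricts trivially to every slice $G\times\{\underline D\}$, and the seesaw principle gives $a^*\mc O_{\mc Q}(1)=\mathrm{pr}_{C^{(\mathbf d)}}^*M$; evaluating on $\{e\}\times C^{(\mathbf d)}$ identifies $M=\mc O(-\Delta_{\mathbf d}/2)$. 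Thus on $G\times C^{(\mathbf d)}$ one has the clean identity
$$a^*L=\mathrm{pr}_{C^{(\mathbf d)}}^*\big([\mc O(-\Delta_{\mathbf d}/2)]+\pi_{\mathbf d}^*\beta\big),$$
with no contribution from the $G$-direction.

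Now let $B\subset\mc Q$ be an arbitrary irreducible curve and let $\mathbf d\in\mc P^{\leq n}_d$ be its generic decomposition type, so that the generic torsion quotient along $B$ is $\bigoplus_i\mc O_{D_i}$ and lies in the $G$-orbit of $\mathrm{Im}(\psi_{\mathbf d})$. After replacing $B$ by a finite cover $\tilde B$ (to order the factors and split the framing over the generic point) and normalizing, choosing a framing yields a rational section $s\colon\tilde B\dashrightarrow G\times C^{(\mathbf d)}$ with $a\circ s=\nu\colon\tilde B\to B$; its second component extends to a genuine morphism $\phi\colon\tilde B\to C^{(\mathbf d)}$ since $C^{(\mathbf d)}$ is proper. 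Combining this with the displayed identity gives an equality of line bundles on $\tilde B$,
$$\nu^*L=\phi^*\big([\mc O(-\Delta_{\mathbf d}/2)]+\pi_{\mathbf d}^*\beta\big)\otimes\mc O_{\tilde B}(E),$$
where $E$ is supported on the finite locus where the framing degenerates. The first factor has non-negative degree by hypothesis, because $[\mc O(-\Delta_{\mathbf d}/2)]+\pi_{\mathbf d}^*\beta$ is nef and $\phi_*[\tilde B]$ is an effective curve class on $C^{(\mathbf d)}$. Hence $(\deg\nu)(L\cdot B)=\deg\nu^*L\geq\deg E$, and nefness of $L$ on $B$ follows once $\deg E\geq 0$.

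The main obstacle is exactly the non-negativity of the correction $E$, and this is where the geometry of $\mc Q$ beyond the symmetric products enters. At a point of $\tilde B$ where the generic splitting $\bigoplus_i\mc O_{D_i}$ degenerates, the framing matrix acquires zero determinant, the limiting quotient becomes strictly more special (for instance two colliding length-one quotients fuse into a single cyclic length-two quotient), and one must show that this specialization can only raise the degree of $\mc O_{\mc Q}(1)$ relative to the split model. I expect to prove $E\geq 0$ by a local analysis on the punctual Quot schemes occurring as the fibres of $\Phi$: resolving the indeterminacy of $s$ and computing the discrepancy between $\nu^*\mc O_{\mc Q}(1)$ and $\phi^*\mc O(-\Delta_{\mathbf d}/2)$ as a determinant of cohomology of the difference of the two flat families of torsion sheaves. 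The same computation disposes of the remaining case of curves $B$ contracted by $\Phi$, where $\phi$ is constant and the whole number $L\cdot B=\mc O_{\mc Q}(1)\cdot B$ is concentrated in $\deg E$; there $\deg E\geq 0$ is precisely the relative nefness of $\mc O_{\mc Q}(1)$ over $C^{(d)}$, which I would establish directly from its construction in Definition \ref{def-O(1)}.
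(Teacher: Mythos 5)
Your forward implication is the same as the paper's: pull back along $\eta_{\mathbf d}$ (your $\psi_{\mathbf d}$) and use $\eta_{\mathbf d}^*[\mc O_{\mc Q}(1)]=[\mc O(-\Delta_{\mathbf d}/2)]$, which is equation \eqref{pullback of O(1) via various sections}. The converse, however, has two genuine gaps.

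First, the reduction to $G\times C^{(\mathbf d)}$ fails at the outset: the generic quotient along a curve $B\subset\mc Q$ need not lie in the image of the twisted evaluation map $a\colon G\times C^{(\mathbf d)}\to\mc Q$ for \emph{any} $\mathbf d$. The action of $G=\mathrm{GL}(V)$ only moves the constant framing, whereas identifying a fibre $\mc B_b$ with $\bigoplus_i\mc O_{D_i}$ in a standard position requires automorphisms of the target, which involve non-constant units. Concretely, take $n=2$, $d=2$, and the quotient $q\colon\mc O_C^2\to\mc O_{2c}$, $(f_1,f_2)\mapsto f_1+tf_2$, where $t$ is a local parameter at $c$. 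Its type is $\mathbf d=(2)$, but the $G$-orbit of $\mathrm{Im}(\psi_{(2)})$ meets the two-dimensional fibre $\Phi^{-1}(2c)$ only in the one-dimensional family of quotients $(f_1,f_2)\mapsto\alpha f_1+\beta f_2$ with $(\alpha,\beta)$ constant, and $q$ is not of this form (forcing the same kernel makes the matrix singular); it is not in the orbit of $\mathrm{Im}(\psi_{(1,1)})$ either, since $\mc O_{2c}\not\cong k_c\oplus k_c$. A curve consisting of such quotients admits no rational section $s\colon\tilde B\dashrightarrow G\times C^{(\mathbf d)}$, so your identity for $\nu^*L$ is not available.

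Second, even where your framework applies, you defer the crux --- $\deg E\geq 0$ --- to an unproven ``local analysis on punctual Quot schemes,'' and your treatment of $\Phi$-contracted curves likewise asserts relative nefness of $\mc O_{\mc Q}(1)$ without proof. This deferred positivity is exactly the content of the paper's key Lemma \ref{bound on O(1) in terms of diagonals}: for any $f\colon D\to\mc Q$ there is a partition $\mathbf d\in\mc P^{\leq n}_d$ such that $\Phi\circ f$ factors through $\pi_{\mathbf d}$ and $[\mc O_{\mc Q}(1)]\cdot[D]\geq[\mc O(-\Delta_{\mathbf d}/2)]\cdot[D]$, after which the theorem is a two-line deduction. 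The paper proves the Lemma with no group action at all, by induction on $d$: a general section $\mc O_{C\times D}\to\mc B_D$ yields $0\to\mc O_{\Gamma'}\to\mc B_D\to\mc F\to 0$; discarding the zero-dimensional torsion $T_0(\mc F)$ gives $\mc F'$ flat over $D$, a quotient of $\mc O^{n-1}_{C\times D}$, hence a map $D\to\mc Q(n-1,d')$ handled by induction; the non-negativity comes precisely from $\deg f_*\mc F\geq\deg f_*\mc F'$ together with $\deg f_*\mc O_{\Gamma'}=[\mc O(-\Delta_{d-d'}/2)]\cdot[D]$. Your proposal contains no substitute for this mechanism, so the converse direction remains unproven; to repair it you would either need to control limits of framings far beyond what the constant $\mathrm{GL}(V)$-action sees, or adopt a degeneration argument of the paper's sectionwise, inductive type.
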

Using the above we show that certain classes are in ${\rm Nef}(\mc Q)$.
Define
\begin{equation}\label{lower bound along the theta side}
	\kappa_1:=[\mc O_{\mc Q}(1)]+\mu_0[L_0]+\dfrac{d+g-2}{dg}[\theta_d]
	\qquad \kappa_2=[\mc O_{\mc Q}(1)]+\dfrac{g+1}{2g}[L_0] \in N^1(\mc Q)\,.
\end{equation}	


\begin{proposition*}[Proposition \ref{another lower bound for NEF cone}]
	Let $g\geq 1$, $n\geq 1$ and $\mc Q=\mc Q(n,d)$.  
	Then  
	$${\rm Nef}(\mc Q)\supset \mb R_{\geq 0}\kappa_1+
	\mb R_{\geq 0}\kappa_2+\mb R_{\geq 0}[\theta_d]+\mb R_{\geq 0}[L_0]\,.$$
\end{proposition*}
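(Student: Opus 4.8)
The plan is to use that $\mathrm{Nef}(\mc Q)$ is a closed convex cone, so it suffices to prove that each of the four generators $\kappa_1,\kappa_2,[\theta_d],[L_0]$ lies in $\mathrm{Nef}(\mc Q)$. The two classes $[\theta_d]$ and $[L_0]$ are pullbacks under $\Phi$ of classes on $C^{(d)}$, so since the pullback of a nef class under any morphism is nef, it is enough that $\theta_d$ and $L_0$ are nef on $C^{(d)}$. The first is the pullback of the ample theta divisor on the Jacobian under the Abel--Jacobi map and is therefore nef; the second is nef on $C^{(d)}$ by its description in Definition \ref{three divisors of symmetric product of curves}. Hence $[\theta_d],[L_0]\in\Phi^*\mathrm{Nef}(C^{(d)})\subset\mathrm{Nef}(\mc Q)$.

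It remains to treat $\kappa_1$ and $\kappa_2$, both of the form $[\mc O_{\mc Q}(1)]+\beta$ with $\beta\in N^1(C^{(d)})$ (namely $\beta=\mu_0[L_0]+\tfrac{d+g-2}{dg}[\theta_d]$ for $\kappa_1$ and $\beta=\tfrac{g+1}{2g}[L_0]$ for $\kappa_2$). Here I would invoke the nefness criterion of Theorem \ref{criterion-for-nefness}: it reduces the nefness of $[\mc O_{\mc Q}(1)]+\beta$ on $\mc Q$ to the nefness of $[\mc O(-\Delta_{\mathbf d}/2)]+\pi^*_{\mathbf d}\beta$ on the product $C^{(\mathbf d)}=\prod_i C^{(d_i)}$, for every partition $\mathbf d\in\mc P^{\le n}_d$. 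Thus the whole problem becomes a family of nefness statements on products of symmetric products, one for each partition of $d$ into at most $n$ parts.

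The core of the argument is then to verify these statements. First I would fix coordinates on $N^1(C^{(\mathbf d)})=\bigoplus_i N^1(C^{(d_i)})$ using the point class $x_i$ and the theta class $\theta_i$ on each factor, and compute the relevant pullbacks in this basis: $\pi^*_{\mathbf d}L_0$, $\pi^*_{\mathbf d}\theta_d$, and the diagonal $\Delta_{\mathbf d}$. The first two are additive over the factors up to the correspondence (cross) terms coming from the theorem of the cube on the Jacobian, while $\Delta_{\mathbf d}$ splits as $\sum_i \Delta_{d_i}$ on the factors plus twice the sum of the pairwise ``shared point'' divisors $\Gamma_{ij}$ on $C^{(d_i)}\times C^{(d_j)}$. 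The strategy is then to rewrite $[\mc O(-\Delta_{\mathbf d}/2)]+\pi^*_{\mathbf d}\beta$ as a sum of a factorwise part $\sum_i p_i^*\big([\mc O(-\Delta_{d_i}/2)]+\beta_i\big)$ and pairwise cross parts supported on the products $C^{(d_i)}\times C^{(d_j)}$, and to show each summand is nef; since a sum of nef classes is nef, this yields the claim. The factorwise parts reduce to single symmetric products of smaller degree, where nefness follows from the same nef classes on $C^{(m)}$ whose pullbacks furnish $[\theta_d]$ and $[L_0]$, and the cross parts are handled by a two--factor computation showing that $-\tfrac12[\Gamma_{ij}]$ plus the corresponding piece of $\pi^*_{\mathbf d}\beta$ is nef. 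The coefficients $\mu_0$, $\tfrac{d+g-2}{dg}$ and $\tfrac{g+1}{2g}$ are exactly what is needed to make each of these inequalities hold.

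The step I expect to be the main obstacle is the verification of nefness on the multi--factor products $C^{(\mathbf d)}$: nefness is genuinely not a factorwise condition, so the cross--diagonal terms $\Gamma_{ij}$ must be controlled carefully. Concretely the difficulty is twofold. First, one must pin down the classes of $\Gamma_{ij}$ and of $\pi^*_{\mathbf d}\theta_d$ precisely, which requires the intersection theory of symmetric products and the behaviour of the theta class under the addition map on $J$. Second, one must prove that the resulting cross classes are nef on $C^{(d_i)}\times C^{(d_j)}$, which I would reduce to testing against the curves spanning the relevant part of the cone of curves of the two--factor product. Once the cross terms are shown nef and the factorwise terms are handled by the smaller--degree nef classes, summing over all $\mathbf d\in\mc P^{\le n}_d$ and applying Theorem \ref{criterion-for-nefness} completes the proof that $\kappa_1,\kappa_2\in\mathrm{Nef}(\mc Q)$, and hence that the displayed cone is contained in $\mathrm{Nef}(\mc Q)$.
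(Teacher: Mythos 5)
Your treatment of $[\theta_d]$, $[L_0]$ and $\kappa_2$ is essentially on the paper's track: the paper also reduces $\kappa_2$ via Theorem \ref{criterion-for-nefness} to the classes $[\mc O(-\Delta_{\mathbf d}/2)]+\mu_0^{(2)}\pi^*_{\mathbf d}[L_0^{(d)}]$, writes $-[\Delta_{d_i}/2]=(1-\mu_0^{(d_i)})[\theta_{d_i}]-\mu_0^{(d_i)}[L_0^{(d_i)}]$ by Lemma \ref{E in terms of various bases}, and controls the cross terms not by computing their classes but by the curve-level inequality of Lemma \ref{compare various L_0}, $[L_0^{(d)}]\cdot[D]\geq \sum_i[L_0^{(d_i)}]\cdot[D]$ (the mixed pairs in the definition of $L_0$ are pullbacks of $\Theta$ under difference morphisms, hence contribute nonnegatively); then $\mu_0^{(2)}\geq \mu_0^{(d_i)}$ for $d_i\geq 2$ and $[L_0^{(1)}]=0$ finish. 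One correction to your bookkeeping: $[\mc O(-\Delta_{\mathbf d}/2)]$ is \emph{defined} as the factorwise sum $\sum_i p_i^*[\mc O(-\Delta_{d_i}/2)]$, because it arises as $\eta_{\mathbf d}^*\mc O_{\mc Q}(1)$ (equation \eqref{pullback of O(1) via various sections}); it is not $\pi_{\mathbf d}^*[-\Delta_d/2]$, so the pairwise ``shared point'' divisors $\Gamma_{ij}$ you introduce never enter the criterion class at all.

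The genuine gap is $\kappa_1$. The paper does \emph{not} run $\kappa_1$ through the criterion: its nefness is Proposition \ref{some line bundle is nef }, proved earlier by a completely different mechanism --- the dichotomy on whether $f(D)$ meets the open set $U$, with Lemma \ref{the open set U} giving $([\mc O_{\mc Q}(1)]+[\Delta_d/2])\cdot[D]\geq 0$ in one case, and Lemma \ref{the complement of open set U} (global sections of $B_{L,\mc Q}$, via Lemmas \ref{B_L.D geq 0} and \ref{line bundles restricted to a divisor}) giving $([\mc O_{\mc Q}(1)]+(d+g-2)[x])\cdot[D]\geq 0$ in the other. Your plan --- split the criterion class for $\kappa_1$ into factorwise pieces $\sum_i p_i^*([\mc O(-\Delta_{d_i}/2)]+\beta_i)$ plus cross pieces and show each summand nef --- cannot work as stated. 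Factorwise, the piece on $C^{(d_i)}$ is $\bigl(1-\mu_0^{(d_i)}+\tfrac{d+g-2}{dg}\bigr)[\theta_{d_i}]+(\mu_0^{(d)}-\mu_0^{(d_i)})[L_0^{(d_i)}]$, and since $\mu_0^{(m)}$ is strictly decreasing in $m$, the $L_0$-coefficient is negative whenever $d_i<d$; for ${\rm gon}(C)\leq d_i<d$ such a class is not nef by Proposition \ref{nef-cone-C^(d)}. And the cross part of $\pi^*_{\mathbf d}\theta_d$ coming from the theorem of the cube is not nef either: for $d_i=d_j=1$ it is the class $f_1+f_2-\delta$ on $C\times C$ ($f_i$ the fibres, $\delta$ the diagonal), whose self-intersection is $-2g<0$. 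So no choice of coefficients makes the summands individually nef; the compensation between factor deficits and cross terms occurs only at the level of intersection numbers with curves, and your proposal supplies no mechanism for it. (The criterion classes for $\kappa_1$ \emph{are} nef, but by the ``only if'' direction of Theorem \ref{criterion-for-nefness} this is equivalent to the nefness of $\kappa_1$ itself --- assuming it is circular. The direct repair is simply to cite Proposition \ref{some line bundle is nef }, which is what the paper does.)
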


Now consider the case when $n\geq d\geq {\rm gon}(C)$. 
Then ${\rm Nef}(C^{(d)})$ is generated by $\theta_d$ and $L_0$ 
(see Definitions \ref{def-L_0} and 
\ref{three divisors of symmetric product of curves}). In this case 
we give the following upper bound for the nef 
cone in Proposition \ref{nef cone of quot schemes }.
Let $\mu_0:=\dfrac{d+g-1}{dg}$. Then 
$${\rm Nef}(\mc Q)\subset \mb R_{\geq 0}([\mc O_{\mc Q}(1)]+\mu_0[L_0])+\mb R_{\geq 0}[\theta_d]+\mb R_{\geq 0}[L_0]\,.$$
When $d\geq {\rm gon}(C)$, in Lemma \ref{B_L-not-ample} we show that 
any convex linear combination of the $\kappa_1$ and 
$\theta_d$ is nef but not ample. In particular, 
any such class lies on the boundary of ${\rm Nef}(\mc Q)$.
Similarly, in Corollary \ref{O(1)+mu^2L_0-not-ample} we show
when $n\geq d$, any convex linear combination of the 
class $\kappa_2$
and $L^{(d)}_0$ is nef but not ample. So any such class 
lies on the boundary of ${\rm Nef}(\mc Q)$. 
\begin{equation}\label{picture-nef-cone}
	\begin{tikzpicture}[scale = .7]
	\begin{scope}[thick,font=\scriptsize]
	\draw [dashed] (-3,0) -- (.8,0);
	\draw (.8,0) -- (4,0);
	\draw (-3,0) -- (1,-3);
	\filldraw (1.6,2.4) circle[radius=0.04cm]; 
	\path [draw=none,fill={rgb:black,1;white,2},semitransparent] (-3,0)--(.47,3.55)--(1,-3)--(-3,0);
	\draw (-3,0) -- (.47,3.55);
	\path [draw=none,fill={rgb:black,1;white,4},semitransparent] (-3,0)--(.47,3.55)--(4,0)--(-3,0);
	\filldraw (.565,2.3) circle[radius=0.04cm]; 
	\draw (-3,0) -- (.565,2.3); 
	\draw [dashed] (-3,0) -- (1.6,2.4); 
	\filldraw (.47,3.55) circle[radius=0.04cm];
	\draw [dashed] (1.6,2.4) -- (.565,2.3); 
	\draw (1,-3) -- (.47,3.55);
	\draw (4,0) -- ($(4,0)!5cm!(2.5,1.5)$);
	\draw (1,-3) -- (4,0);
	\filldraw (4,0) circle[radius=0.04cm];
	\node at (4,0) [right]{$(\textbf{C})$}; 
	\filldraw (1,-3) circle[radius=0.04cm];
	\node at (1,-3) [below]{$(\textbf{B})$}; 
	\node at (.3,2.2) [left]{$(\textbf{D})$}; 
	\node at (1.8,2.5) [right]{$(\textbf{E})$}; 
	\node at (.47-.3,3.55) [left]{$(\textbf{A})$}; 
	\node at (-3,0) [left]{$(\textbf{O})$};
	\end{scope}
	\end{tikzpicture}
\end{equation}

\begin{enumerate}[(1)]
	\item $(\textbf{A})=[\mc O_{\mc Q}(1)]+\mu_0[L_0]$
	\item $(\textbf{B})=[\theta_d]$
	\item $(\textbf{C})=[L_0]$
	\item $(\textbf{D})=\tau \kappa_1=\tau([\mc O_{\mc Q}(1)]/2+\mu_0[L_0])+(1-\tau)[\theta_d]\qquad
			\tau=\dfrac{1}{1+\frac{d+g-2}{dg}}$  
	\item $(\textbf{E})=\rho\kappa_2=\rho([\mc O_{\mc Q}(1)]+\mu_0[L_0]) + (1-\rho)[L_0]\qquad
		\rho=\dfrac{1}{1+\frac{g+1}{2g}-\frac{d+g-2}{dg}}$
\end{enumerate}
In terms of the above diagram, we have that when $n\geq d\geq {\rm gon}(C)$
$$\langle \overline{OD}, \overline{OE}, \overline{OC}, \overline{OB}\rangle 
	\subset {\rm Nef}(\mc Q)\subset \langle \overline{OA}, \overline{OC}, \overline{OB} \rangle \,. $$

We do not know if the inclusion in the right is 
an equality when $n\geq d\geq {\rm gon}(C)$. 
This is same as saying that $[\mc O_{\mc Q}(1)]+\mu_0[L_0]$
is nef when $n\geq d\geq {\rm gon}(C)$.
In Section \ref{curves-over-small-diagonal} we give a 
sufficient condition for when the pullback 
of $[\mc O_{\mc Q}(1)]+\mu_0[L_0]$ along a map $D\to \mc Q$ 
is nef. However, when $d=3$ we have the following result.

\begin{theorem*}[Theorem \ref{cone-d=3}]
	Let $C$ be a very general curve of genus $2\leq g(C)\leq 4$. 
	Let $n\geq3$ and let $\mc Q=\mc Q(n,3)$. Let $\mu_0=\dfrac{g+2}{3g}$
	Then 
	$${\rm Nef}(\mc Q)= \mb R_{\geq 0}([\mc O_{\mc Q}(1)]+\mu_0[L_0])+
	\mb R_{\geq 0}[\theta_d]+\mb R_{\geq 0}[L_0]\,.$$
\end{theorem*}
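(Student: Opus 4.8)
The plan is to promote the upper bound of Proposition \ref{nef cone of quot schemes } to an equality. First observe that the hypothesis $2\le g\le 4$ is exactly what guarantees $\mathrm{gon}(C)\le 3=d$ for a very general $C$ (since $\mathrm{gon}(C)=\lfloor (g+3)/2\rfloor$), so that $\mathrm{Nef}(C^{(3)})=\mb R_{\ge 0}[\theta_3]+\mb R_{\ge 0}[L_0]$ by the results recalled in Section \ref{section-nef-C^{(d)}}, and so that the upper bound of Proposition \ref{nef cone of quot schemes } applies and has precisely the stated form. Combining this with the lower bound of Proposition \ref{another lower bound for NEF cone}, which already shows $[\theta_3]$ and $[L_0]$ to be nef, the theorem collapses to the single assertion that the vertex $(\textbf{A})=[\mc O_{\mc Q}(1)]+\mu_0[L_0]$ is nef: convexity of the nef cone then yields the missing inclusion.

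To prove $(\textbf{A})$ nef I would apply the criterion of Theorem \ref{criterion-for-nefness} with $\beta=\mu_0[L_0]\in N^1(C^{(3)})$. Since $n\ge 3=d$, the index set $\mc P^{\leq n}_3$ consists of all partitions of $3$, namely $(3)$, $(2,1)$ and $(1,1,1)$, so it suffices to show that
$$[\mc O(-\Delta_{\mathbf d}/2)]+\mu_0\,\pi^*_{\mathbf d}[L_0]\in N^1(C^{(\mathbf d)})$$
is nef on $C^{(3)}$, on $C^{(2)}\times C$ and on $C\times C\times C$ respectively. In each case the first step is to expand the two pieces in the natural generators of $N^1(C^{(\mathbf d)})$: on a product $\Delta_{\mathbf d}$ splits as the sum $\sum_i \mathrm{pr}_i^*\Delta_{d_i}+\sum_{i<j}I_{ij}$ of the factor diagonals together with the pairwise incidence divisors $I_{ij}$, while $\pi^*_{\mathbf d}[L_0]$ splits as the sum of the corresponding point-classes on the factors. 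I would compute all of these using the intersection theory of symmetric products and the very general hypothesis.

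For $\mathbf d=(3)$ the map $\pi_{(3)}$ is the identity and the required nefness is a membership test in the explicitly known cone $\mb R_{\ge 0}[\theta_3]+\mb R_{\ge 0}[L_0]$; once $\Delta_3$ is written in the basis $\{[L_0],[\theta_3]\}$ this is a finite linear-algebra check, and the tuned value $\mu_0=\tfrac{g+2}{3g}$ should place the class exactly on the boundary of that cone. This is the step where the genus restriction is indispensable, since it is what makes the cone available in closed form.

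The main obstacle will be the two product cases $C^{(2)}\times C$ and $C\times C\times C$: for $g=3,4$ the gonality shortcut no longer produces a clean nef cone for the proper symmetric factor $C^{(2)}$, so these cannot be reduced to a known cone. For them I would instead verify nefness by intersecting the candidate class against a spanning set of curve classes, using the very general hypothesis to control $N^1$ of each product — the Néron–Severi groups are generated by the factor classes together with the diagonal/graph correspondence because $\mathrm{End}(J(C))=\mb Z$ — and keeping careful track of the coefficients of the incidence divisors $I_{ij}$. The symmetry of the class on $C\times C\times C$ under permuting the three factors should cut the verification down to a handful of test curves, after which the needed inequalities again reduce to the single numerical constraint encoded in $\mu_0$.
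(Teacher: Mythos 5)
Your reduction is sound as far as it goes: the hypothesis $2\le g\le 4$ is indeed equivalent to ${\rm gon}(C)\le 3$ for $C$ very general, so Proposition \ref{nef cone of quot schemes } gives the upper bound, $[\theta_3]$ and $[L_0]$ are nef as pullbacks along $\Phi$, and the theorem collapses to nefness of $[\mc O_{\mc Q}(1)]+\mu_0[L_0]$. Your partition-$(3)$ case is also correct, since $[\mc O(-\Delta_3/2)]+\mu_0^{(3)}[L_0]=(1-\mu_0^{(3)})[\theta_3]$ by Lemma \ref{E in terms of various bases} --- though note this is nef for \emph{every} genus, so the genus restriction is used only in the upper bound, not where you place it. The gap is in the product cases, and it is twofold. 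First, you misread the definition in subsection \ref{subsection-criterion-for-nefness}: $[\mc O(-\Delta_{\mathbf d}/2)]$ is $\sum_i p_i^*[\mc O(-\Delta_{d_i}/2)]$, with \emph{no} incidence divisors $I_{ij}$; including them, as you do, essentially turns every test class into $\pi_{\mathbf d}^*((1-\mu_0)[\theta_3])$, making the criterion vacuous --- a sign the decomposition is off. With the correct definition the $(1,1,1)$ case is trivial (the class is $\mu_0\pi^*[L_0]$, nef), and the entire difficulty sits in $(2,1)$: one must show $p_1^*[\mc O(-\Delta_2/2)]+\mu_0^{(3)}\pi^*[L_0^{(3)}]$ is nef on $C^{(2)}\times C$. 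Second, your proposed verification --- testing against a spanning set of curve classes --- is not a proof of nefness: one must test against the full cone of effective curves of $C^{(2)}\times C$, which is not known for very general $C$ (for $g=3,4$ even the second extremal ray of ${\rm Nef}(C^{(2)})$ is the nontrivial $\alpha_t$, and cones of curves of such products are open problems); knowing $N^1$ is small via ${\rm End}(J(C))=\mb Z$ does not determine it. The numerics confirm that soft estimates cannot close this: writing $\pi^*[L_0^{(3)}]=p_1^*[L_0^{(2)}]+M$ with $M$ nef (the content of Lemma \ref{compare various L_0}), the factor part $p_1^*(-[\Delta_2/2]+\mu_0^{(3)}[L_0^{(2)}])$ is \emph{not} nef since $\mu_0^{(3)}<\mu_0^{(2)}$ for $g\ge 2$ (e.g.\ for $g=3$ one has $\mu_0^{(3)}=5/9<7/12=\frac{t+1}{g+t}$), so everything hinges on bounding $M\cdot D$ from below for arbitrary curves $D$ --- which, by the ``if and only if'' in Theorem \ref{criterion-for-nefness}, is logically equivalent to the theorem itself rather than a simplification of it.

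The paper does not argue through the partition criterion at all. Nefness of $[\mc O_{\mc Q}(1)]+\mu_0^{(3)}[L_0^{(3)}]$ is Corollary \ref{nefness of the line bundle-d=3}, proved by stratifying $\mc Q(n,3)$ by the three quotient types: curves meeting $U$ or $U'$ are handled by Corollary \ref{cor-image-meets-U} and Lemma \ref{the open set V}; for a curve $D$ all of whose points are of type (2), one passes to a cover (Corollary \ref{cover-D}) so that $D\to C^{(3)}$ lifts to $C^3$ with the shape $d\mapsto(g_1(d),g_1(d),g_2(d))$, splits off a general section as in diagram \eqref{diagram-inductive-step} to get $[\mc O_{\mc Q}(1)]\cdot[D]\ge [\mc O(-\Delta_2/2)]\cdot[D]\ge -\mu_0^{(2)}[L_0^{(2)}]\cdot[D]$, and then uses the key identity $[L_0^{(3)}]\cdot[D]=2[L_0^{(2)}]\cdot[D]$, valid precisely because $D$ lies over the partial diagonal: the difference map of the doubled pair is constant and the two cross pairs contribute equally. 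The surviving coefficient is $2\mu_0^{(3)}-\mu_0^{(2)}\ge 0$, and this factor of $2$ is exactly the gain your route discards --- for an arbitrary curve of $C^{(2)}\times C$ one only gets the factor $1$ of Lemma \ref{compare various L_0}, which is insufficient. To repair your proposal you would have to abandon the criterion and run the geometric case analysis of Section \ref{curves-over-small-diagonal}, including Proposition \ref{prop-curve-over-small-diag} (nonnegativity of $[\mc O_{\mc Q}(1)]$ on curves over the small diagonal, via $\mc I/\mc I^2\cong g^*\omega_C$ and global generation of the graded pieces), none of which appears in your plan.
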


Some of the results above can be improved in the case
when $g=2k$ using the 
results in \cite{Pa}. (See Proposition \ref{nef-B_L-Pa}.)

\section{Nef cone of $C^{(d)}$}\label{section-nef-C^{(d)}}

We follow \cite[\S 2]{Pa} for this section.
Assume that either  $C$ is an elliptic curve or is a very general curve of genus $g\geq 2$. 
Then it is known that the Neron-Severi space is  $2$-dimensional. 
So in this case, to compute the nef cone, 
it is enough to give two classes in $N^1(C)$ which are nef but not ample. 

For any smooth projective curve and $d\geq 2$ 
(not just a very general curve) 
there is a natural line bundle $L_0$ on $C^{(d)}$ which is nef but not ample. 
This line bundle is constructed in the following manner. Consider the map 
$$\phi:C^d \to J(C)^{d \choose 2}\,,$$
$$(x_i)\mapsto (x_i-x_j)_{i<j}\,.$$
Let $p_{ij}$ denote the projections from $J(C)^{d \choose 2}$. 
Since $\phi$ is not finite, as it contracts the diagonal,
the line bundle $\phi^*(\otimes p^*_{ij}\Theta)$ is nef but not ample. 
This line bundle is invariant under the action of $S_d$ on $C^d$. 
This follows from the fact that 
$\Theta$ in $J(C)$ is invariant under the involution $L\mapsto L^{-1}$. 
\begin{definition}\label{def-L_0}
	$\phi^*(\otimes p^*_{ij}\Theta)$ descends to a line bundle $L_0$ on $C^{(d)}$. 
\end{definition}
Since $\phi$ contracts the small diagonal 
$\delta:C\hookrightarrow C^{(d)}$, we have 
$\delta^*[L_0]=0$. Hence $L_0$ is nef but not ample \cite[Lemma 2.2]{Pa}.
Therefore, in the case when $C$ is very general, 
computing the nef cone of $C^{(d)}$ boils down 
to finding another class which is nef but not ample. 

In the case when $d\geq {\rm gon}(C)=:e$, \cite[Lemma 2.3]{Pa}
we can easily construct another line bundle which is nef but not ample: 
Then we have a map $g_e:C\to \mb P^1$ of degree $e$. 
This induces a closed immersion 
$\mb P^1\to C^{(e)}$ with  $v\mapsto [(g_e)^{-1}(v)]\in C^{(e)}$. 
This in turn gives a closed immersion 
$\mb P^1\to C^{(d)}$ with $v\mapsto [(g_e)^{-1}(v)+(d-e)x]$ for some point $x\in C$. 
\begin{definition}\label{p1-in-C^(d)}
	Denote the class of this $\mb P^1$ in $N_1(C^{(d)})$ by $[l']$. 
\end{definition}
The composition $\mb P^1\to C^{(d)}\xrightarrow{u_d} J(C)$ is constant, 
since there can be no non-constant maps from $\mb P^1\to J(C)$. 
Hence $u_d:C^{(d)}\to J(C)$ is not finite and we get that 
$u_d^*\Theta$ is nef but not ample. 
\begin{definition}\label{def-theta_d}
	Define $\theta_d:=u_d^*\Theta$.
\end{definition}

\noindent 
Recall that over $C^{(d)}$ we have natural divisors \cite[\S 2]{Pa}:

\begin{definition}\label{three divisors of symmetric product of curves}
	Define
\begin{enumerate}
	\item $\theta_d$
	\item the big diagonal $\Delta_d\hookrightarrow C^{(d)}$
	\item If $i_{d-1}:C^{(d-1)}\to C^{(d)}$
	is the map given by $D\mapsto D+x$ for a point $x\in C$,
	then the image $i_{d-1}(C^{(d-1)})$. This divisor will be denoted $[x]$. 
\end{enumerate}
\end{definition}

It is known that when $g=1$ or $C$ is very general of $g\geq 2$, 
then $N^1(C^{(d)})$ is of dimension $2$ and any two of the 
above three forms a basis. 

By abuse of notation, let us denote the class ($\delta$ is the small diagonal) 
$[\delta_*(C)]\in N_1(C^{(d)})$
by $\delta$. We summarise the above discussion in the following theorem.

\begin{proposition}\cite[Proposition 2.4]{Pa}\label{nef-cone-C^(d)}
	When $d\geq {\rm gon}(C)$, we have:
	\begin{enumerate} 
		\item ${\rm Nef}(C^{(d)})=\mb R_{\geq 0}[L_0]\oplus \mb R_{\geq 0}[\theta_d]$\,,
		\item $\overline{NE}(C^{(d)})=\mb R_{\geq 0}[l']\oplus \mb R_{\geq 0}[\delta]$\,.
	\end{enumerate}
	The above basis are dual to each other.
\end{proposition}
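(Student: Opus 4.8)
The plan is to exploit that, under the standing hypothesis, $N^1(C^{(d)})$ is $2$-dimensional, so that ${\rm Nef}(C^{(d)})$ is a closed, pointed, full-dimensional convex cone in a plane; it is therefore the convex cone spanned by exactly two boundary rays, and its interior is the ample cone. Consequently it suffices to identify these two edges, and for that it is enough to exhibit two non-proportional classes that are nef but not ample: a nef class lies on $\partial\,{\rm Nef}(C^{(d)})$ precisely when it fails to be ample, and two non-proportional boundary classes of a pointed planar cone cannot lie on the same edge, hence must span the two distinct edges. The positive span of these two classes is then the whole cone.

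The two candidate classes are already available, since $L_0$ and $\theta_d$ have been shown to be nef but not ample; so first I would simply record that each lies on $\partial\,{\rm Nef}(C^{(d)})$. The remaining content of part (1) is that $[L_0]$ and $[\theta_d]$ are not proportional, and I would establish this by pairing them against the two curve classes $[l']$ and $\delta$. Two of the four intersection numbers are already known to vanish: $L_0\cdot\delta=\delta^*[L_0]=0$ because $\phi$ contracts the small diagonal, and $\theta_d\cdot[l']=0$ because $u_d$ contracts the rational curve $[l']$. Thus the pairing matrix of $\{[L_0],[\theta_d]\}$ against $\{[l'],\delta\}$ is diagonal, and both the non-proportionality and the ``dual basis'' assertion reduce to checking that the two diagonal entries $L_0\cdot[l']$ and $\theta_d\cdot\delta$ are strictly positive.

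Establishing these two positivities is the technical core. The entry $\theta_d\cdot\delta$ is the easy one: $u_d\circ\delta\colon C\to J(C)$ sends $x$ to the class of $d$ copies of $x$ up to a fixed translate, i.e. it is a translate of the composite of the Abel--Jacobi embedding with multiplication by $d$, hence a non-constant finite morphism, so $\theta_d\cdot\delta=\deg\bigl((u_d\circ\delta)^*\Theta\bigr)>0$. The delicate one is $L_0\cdot[l']>0$: here I would pull the $\mb P^1$ indexing $[l']$ back to $C^d$ through $\phi$ and compute the degree of $\phi^*(\otimes p^*_{ij}\Theta)$ along it, using that the pencil $g_e$ moves the points of $g_e^{-1}(v)$ non-trivially, so that at least one difference coordinate $x_i-x_j$ varies non-constantly in $J(C)$ and pairs positively with $\Theta$. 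This positivity is exactly what prevents $L_0$ and $\theta_d$ from sharing an edge, and I expect it to be the main obstacle; the remaining steps are formal. Granting it, $[L_0]$ and $[\theta_d]$ are independent and lie on the two distinct edges of the planar nef cone, which is therefore ${\rm Nef}(C^{(d)})=\mb R_{\geq 0}[L_0]\oplus\mb R_{\geq 0}[\theta_d]$.

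For part (2) I would invoke the duality $\overline{NE}(C^{(d)})={\rm Nef}(C^{(d)})^{\vee}$ together with the perfectness of the intersection pairing $N^1(C^{(d)})\times N_1(C^{(d)})\to\mb R$. Under this duality the edge $\mb R_{\geq 0}[L_0]$ is cut out by the curve class pairing to zero with $\theta_d$, namely $[l']$, and the edge $\mb R_{\geq 0}[\theta_d]$ by the class pairing to zero with $L_0$, namely $\delta$; since $\theta_d\cdot[l']=0$, $L_0\cdot\delta=0$ and the remaining two pairings are positive, $\{[l'],\delta\}$ is precisely the dual extremal basis. This yields $\overline{NE}(C^{(d)})=\mb R_{\geq 0}[l']\oplus\mb R_{\geq 0}[\delta]$ and, since the pairing matrix between $\{[L_0],[\theta_d]\}$ and $\{[l'],\delta\}$ is diagonal with positive entries, the asserted duality of the two bases.
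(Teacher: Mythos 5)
Your proposal is correct and takes essentially the same approach as the paper, which simply cites \cite[Proposition 2.4]{Pa} after laying out exactly your strategy: since $N^1(C^{(d)})$ is two-dimensional it suffices to exhibit two independent nef-but-not-ample classes, and $L_0$, $\theta_d$ qualify because $\phi$ contracts the small diagonal $\delta$ and $u_d$ contracts $[l']$. Your verification of the diagonal pairing matrix (where $\theta_d\cdot\delta=d^2g>0$, and $L_0\cdot[l']>0$ follows even more quickly from $[L_0]=dg[x]-[\theta_d]$ together with $[x]\cdot[l']=1$ and $\theta_d\cdot[l']=0$) just fills in the duality details that the paper delegates to the citation.
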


We will need to write $[L_0]$ in terms of $[x]$ and $[\theta_d]$, for which we need 
the following computations.
Define $$\delta':C \xrightarrow{f} C^d \to C^{(d)}$$
where the first map is given by $x\mapsto (x,x_1,\dots,x_{d-1})$.

\begin{lemma}
	Let $d\geq 1$. We have the following 
	\begin{enumerate}
		\item ${\rm deg} (\delta^*[\theta_d])=d^2g$
		\item ${\rm deg} (\delta'^*[\theta_d])=g$
		\item ${\rm deg} (\delta^*[x])=d$
		\item ${\rm deg} (\delta'^*[x])=1$
	\end{enumerate}
\end{lemma}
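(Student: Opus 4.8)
The plan is to reduce (1)--(2) to the Abel--Jacobi map and (3)--(4) to the quotient map $\pi\colon C^d\to C^{(d)}$. Fix a base point $p_0\in C$ and write $a\colon C\to J(C)$, $p\mapsto[\mc O_C(p-p_0)]$, for the Abel--Jacobi embedding, so that $u_d\colon C^{(d)}\to J(C)$ sends $D\mapsto[\mc O_C(D-dp_0)]$. For (1) I would first note that $u_d\circ\delta=[d]\circ a$, where $[d]$ is multiplication by $d$ on $J(C)$, since $u_d(\delta(p))=[\mc O_C(dp-dp_0)]=d\cdot a(p)$. Hence $\delta^*[\theta_d]=a^*[d]^*[\Theta]$ as classes. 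The key input is $[d]^*[\Theta]\equiv d^2[\Theta]$ in $N^1(J(C))$, which follows from the theorem of the cube together with the symmetry $[-1]^*[\Theta]=[\Theta]$ of a principal polarization class; combined with the classical fact $\deg(a^*[\Theta])=g$ (the Abel--Jacobi curve meets $\Theta$ in $g$ points, by Poincar\'e's formula), this yields $\deg(\delta^*[\theta_d])=d^2g$. For (2) the same factorisation gives $u_d\circ\delta'=t_c\circ a$ with $t_c$ translation by the constant $c=[\mc O_C(x_1+\dots+x_{d-1}-(d-1)p_0)]$; since translations act trivially on $N^1(J(C))$, $\delta'^*[\theta_d]\equiv a^*[\Theta]$ has degree $g$.

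For (3) and (4) I would factor $\delta$ and $\delta'$ through $\pi$ using the lifts $\tilde\delta\colon p\mapsto(p,\dots,p)$ and $\tilde\delta'\colon p\mapsto(p,x_1,\dots,x_{d-1})$. The one fact to establish is $\pi^*\mc O_{C^{(d)}}([x])\cong\mc O_{C^d}\big(\sum_{i=1}^dH_i\big)$, where $H_i=\mathrm{pr}_i^{-1}(x)$: the set-theoretic preimage of $[x]$ is $\bigcup_iH_i$, and $\pi$ is \'etale at the generic point of each $H_i$ (a general point of $H_i$ has distinct coordinates), so every multiplicity equals $1$. Granting this, functoriality of pullback gives $\delta^*\mc O([x])=\tilde\delta^*\bigotimes_i\mathrm{pr}_i^*\mc O_C(x)=\mc O_C(x)^{\otimes d}$, of degree $d$, which is (3). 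For (4), $\mathrm{pr}_i\circ\tilde\delta'$ is the identity when $i=1$ and the constant map to $x_{i-1}$ when $i\geq2$; choosing $x_1,\dots,x_{d-1}$ distinct from $x$, all but the first factor pull back to the trivial bundle, leaving $\deg(\delta'^*[x])=1$.

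The only genuinely delicate point is that in (1) and (3) the relevant maps land in the ramification locus of $\pi$, namely the small diagonal, so one cannot naively pull back Weil divisors and track multiplicities. I expect this to be the main obstacle, but it is circumvented by working throughout at the level of line bundles, where the functoriality $(\pi\circ\tilde\delta)^*=\tilde\delta^*\circ\pi^*$ is automatic and no ramification bookkeeping is required; the symmetry $[-1]^*[\Theta]=[\Theta]$ plays the analogous role on the Jacobian side, absorbing the contraction of the small diagonal into the clean scaling $[d]^*[\Theta]\equiv d^2[\Theta]$.
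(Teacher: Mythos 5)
Your proof is correct and takes essentially the same route as the paper's: (1)--(2) by factoring $u_d\circ\delta$ as multiplication by $d$ composed with the Abel--Jacobi map (resp.\ $u_d\circ\delta'$ as a translate of it) and using $[d]^*\Theta\equiv d^2\Theta$ together with translation-invariance in $N^1(J(C))$, and (3)--(4) by lifting to $C^d$ and pulling back $[x]$ as $\bigotimes_i \mathrm{pr}_i^*\mc O_C(x)$. The only difference is cosmetic: you supply an \'etale-at-the-generic-point argument for the identification $\pi^*\mc O([x])\cong\bigotimes_i\mathrm{pr}_i^*\mc O_C(x)$, whereas the paper simply cites $[x]=[\mc O(x)^{\boxtimes d}]$ from Pacienza.
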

\begin{proof}
	Recall that $\theta_d=u_d^*\Theta$, where
	$u_d:C^{(d)} \to  J(C) $ is given by
	$D    \mapsto    \mc O(D-dx_0)$
	for a fixed point $x_0\in C$. Therefore the composition 
	$u_d\circ \delta:C\to J(C)$ is given by 
	$x \mapsto dx   \mapsto \mc O(dx-dx_0)$,
	which is the map 
	$$C \xrightarrow{u_1} J(C) \xrightarrow{\times d} J(C) \,.$$
	The pullback of $\Theta$ under the map 
	$J(C)\xrightarrow{\times d} J(C)$ is $\Theta^{d^2}$ and 
	the degree of the pullback of $\Theta$ under the map 
	$u_1:C\to J(C)$ is $g$. Hence degree of $\delta^*\theta_d=d^2g$. 
	This proves (1).
	
	The composition $u_d\circ \delta':C \to J(C)$ is given by 
	$C \to  C^{(d)} \to     J(C)$
	\begin{align*}
	x \mapsto & x+\sum_{i=1}^{d-1} x_i   \mapsto \mc O(x+\sum_{i=1}^{d-1} x_i-dx_0)
	\end{align*}
	which is the composition
	$C\xrightarrow{u_1} J(C)\xrightarrow{t_a} J(C)$, 
	where $t_a$ is translation by an element in $J(C)$.
	Hence degree of $\delta'^*\theta_d=g$. This proves (2).
	
	For a line bundle $L$ on $C$, we will denote by $L^{\boxtimes d}$
	to be the unique line bundle on $C^{(d)}$, whose pullback 
	under the quotient map $\pi:C^d\to C^{(d)}$ is $\bigotimes_{i=1}^d p_i^*L$.
	Recall that by \cite[\S2]{Pa}, we have 
	that $[x]=[\mc O(x)^{\boxtimes d}]$ for a point $x\in C$. 
	By definition under the map $\pi:C^d\to C^{(d)}$ the 
	pullback of $\mc O(x)^{\boxtimes d}$ is $\bigotimes^d_{i=1}p^*_i\mc O(x)$. 
	Now $\delta:C\hookrightarrow C^{(d)}$ is the composition
	$C \to  C^d \rightarrow C^{(d)} $
	$$x \mapsto  (x,..,x) \mapsto dx \,.$$
	Hence we get that the pullback of $\mc O(x)^{\boxtimes d}$ to $\delta$ is $\mc O(dx)$. 
	Therefore degree of $\delta^*[x]=d$. This proves (3).
	
	We know $\delta'$ is the composition
	$C \to  C^d \rightarrow C^{(d)} $
	$$x \mapsto  (x,x_1,..,x_{d-1}) \mapsto x+x_1+\dots+x_{d-1} \,.$$
	Hence we get that $\delta'^*[x]=\mc O(x)$.
	Therefore degree of $\delta'^*[x]=1$. This proves (4).
\end{proof}

\begin{lemma}\label{E in terms of various bases}
	Let $g,d\geq 1$. Let $\mu_0:=\dfrac{d+g-1}{dg}$. Then 
	\begin{align*}
	[L_0]&=dg [x]-[\theta_d]\\
	&=(dg-d-g+1).[x]+[\Delta_d/2]\\
	&=(\dfrac{1}{\mu_0}-1)[\theta_d]+\dfrac{1}{\mu_0}[\Delta_d/2]\,.
	\end{align*}
\end{lemma}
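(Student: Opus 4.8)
The plan is to work entirely inside $N^1(C^{(d)})$, which under the standing hypothesis of this section is two-dimensional with $\{[x],[\theta_d]\}$ a basis. The three displayed expressions for $[L_0]$ are equivalent once one knows the classical relation $[\Delta_d/2]=(d+g-1)[x]-[\theta_d]$ (MacDonald's diagonal formula; see \cite[\S2]{Pa}), since this lets one pass freely between the bases $\{[x],[\theta_d]\}$ and $\{[x],[\Delta_d]\}$. So the substance is to establish the first identity $[L_0]=dg[x]-[\theta_d]$, after which the second follows by substituting the diagonal relation and the third by eliminating $[x]$; both of these last steps are routine linear algebra and I will not carry them out here.

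To prove the first identity I would write $[L_0]=a[x]+b[\theta_d]$ and pin down $a,b$ by pairing against the two curve classes $\delta$ and $\delta'$. These are linearly independent in $N_1(C^{(d)})$ for $d\geq 2$: the preceding lemma records $\deg\delta^*[x]=d$, $\deg\delta^*[\theta_d]=d^2g$, $\deg\delta'^*[x]=1$, $\deg\delta'^*[\theta_d]=g$, and the matrix $\left(\begin{smallmatrix} d & d^2g\\ 1 & g\end{smallmatrix}\right)$ is invertible. (The case $d=1$ is degenerate, since then $C^{(d)}=C$, $L_0$ is trivial, and the identities reduce to a degree check, so I treat it separately.) I already know $\deg\delta^*[L_0]=0$ because $\phi$ contracts the small diagonal, which gives the relation $ad+bd^2g=0$; the one new computation needed is $\deg\delta'^*[L_0]$.

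For that I would pull $L_0$ back along $\delta'$ using its definition $\pi^*L_0=\phi^*(\bigotimes_{i<j}p_{ij}^*\Theta)$ and the factorization $\delta'=\pi\circ f$ with $f(x)=(x,x_1,\dots,x_{d-1})$. Composing with $\phi$, the pair projection $p_{ij}\circ\phi\circ f$ sends $x$ to $\mathcal{O}(y_i-y_j)$ where $(y_1,\dots,y_d)=(x,x_1,\dots,x_{d-1})$; this is constant unless the moving coordinate is involved, i.e. unless $i=1$, and for each of the $d-1$ pairs $(1,j)$ it is a translate of $u_1$, whose theta-pullback has degree $g$. Hence $\deg\delta'^*[L_0]=(d-1)g$, giving the second relation $a+bg=(d-1)g$. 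Solving the two linear equations yields $a=dg$ and $b=-1$, which is the first identity.

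The main thing to be careful about is the factor of $2$ hidden in the diagonal term, equivalently the normalization of MacDonald's relation. The map $\pi\colon C^d\to C^{(d)}$ is branched to order two along the diagonal, so the big diagonal pulls back as $\pi^*\Delta_d=2\sum_{i<j}\Delta_{ij}$, and correspondingly $\delta'$ is tangent to (not transverse to) $\Delta_d$, meeting it with multiplicity $2(d-1)$. It is precisely this factor of $2$ that produces $[\Delta_d/2]$ rather than $[\Delta_d]$ in the second formula and the coefficient $\tfrac{1}{\mu_0}=\tfrac{dg}{d+g-1}$ in the third; getting it wrong throws off all three coefficients. If one prefers a derivation of the diagonal relation independent of \cite{Pa}, I would instead compute $[L_0]$ directly on $C^d$: the difference map $C\times C\to J(C)$ pulls $\Theta$ back to $(g-1)(f_1+f_2)+\Delta$, and summing this over all pairs (using $\sum_i F_i=\pi^*[x]$ and the factor-of-two identity above) gives $\pi^*L_0=\pi^*\big((dg-d-g+1)[x]+[\Delta_d/2]\big)$, which is the second identity on the nose by injectivity of $\pi^*$, and the diagonal relation then drops out by comparing with the first.
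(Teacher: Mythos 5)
Your proof is correct and takes essentially the same route as the paper: you write $[L_0]$ in the basis $\{[x],[\theta_d]\}$, extract the two linear equations from $\deg\delta^*[L_0]=0$ and $\deg\delta'^*[L_0]=(d-1)g$ (computed exactly as in the paper, via the pairs $(1,j)$ in the composition with $\phi$), and then pass to the other two expressions using the relation $[\theta_d]=(d+g-1)[x]-[\Delta_d/2]$ from \cite[Lemma 2.1]{Pa}. Your closing direct computation of $\pi^*L_0$ on $C^d$ is a correct independent check of the factor-of-two normalization, but it is supplementary; the core argument coincides with the paper's.
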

\begin{proof}
	Let $[L_0]=a[\theta_d]+b[x]$. We need two equations to solve for $a$ and $b$.
	The first equation is $\delta^*[L_0]=0$. Recall 
	$$\delta':C \xrightarrow{f} C^d \to C^{(d)}$$
	where the first map is given by $x\mapsto (x,x_1,\dots,x_d)$.
	Hence 
	$$\delta'^*[L_0]=f^*\phi^*(\otimes p^*_i\Theta)\,.$$
	Now the composition
	$$C\xrightarrow{f} C^d\xrightarrow{\phi} J(C)^{d \choose 2}$$
	is given by $x\mapsto (x-x_1,x-x_2,\dots,x-x_{d-1},x_i-x_j)_{i<j}$.
	Hence 
	$${\rm deg}(\delta'^*[L_0])=\sum\limits_{i=1}^{d-1} {\rm deg}(\theta_1)=(d-1)g\,.$$
	This will be our second equation.
	
	We use these two equations and the preceding computations to 
	compute $a$ and $b$.
	\begin{align*}
	0&={\rm deg}(\delta^*[L_0])\\
	&= a.{\rm deg}(\delta^*[\theta_d])+b.{\rm deg}(\delta^*[x]) \\
	&= ad^2g+bd\,.
	\end{align*}
	Therefore 
	$$b=-adg\,.$$
	Now using the second equation we get
	\begin{align*}
	(d-1)g&={\rm deg}(\delta'^*[L_0])\\
	&=  a.{\rm deg}(\delta'^*[\theta_d])+b.{\rm deg}(\delta'^*[x])\\
	&= ag+b                      \\
	&= ag-adg   =  ag(1-d)\,.
	\end{align*}
	Therefore 
	$$a=-1,\qquad  b=dg\,.$$
	Hence we get $[L_0]=dg[x]-[\theta_d]$. For the other 
	two equalities, we use the relation 
	$$[\theta_d]=(d+g-1)[x]-[\Delta_d/2]$$
	between $[x],[\Delta_d/2]$ and $[\theta_d]$ \cite[Lemma 2.1]{Pa}.
\end{proof}

\section{Picard group and Neron-Severi group of $\mc Q$}\label{section on picard group and neron-severi group of quot schemes}
Let $E$ be a locally free sheaf over $C$. Throughout this section 
$\mc Q$ will denote the Quot scheme $\mc Q(E,d)$ 
which parametrizes torsion quotients of $E$ of degree d.
In this section we compute the Picard group of $\mc Q$, and the 
vector spaces $N^1(\mc Q)$ and $N_1(\mc Q)$.

\begin{lemma}\label{pullback-lemma}
	Let $S$ be a scheme over $k$. Let $F$ be a 
	coherent sheaf over $C\times S$ which is $S$-flat 
	and for all $s\in S$, $F|_{C\times s}$ is a torsion 
	sheaf over $C$ of degree $d$. Let 
	$p_S:C\times S\to S$ be the projection. Then 
	\begin{enumerate}[(i)]
		\item $p_{S*}(F)$ is locally free of rank $d$ 
		and $\forall s\in S$ the natural map 
		$p_{S*}(F)|_s\to H^0(C,F|_{C\times s})$ is an isomorphism. 
		\item Assume that we are given a morphism $\phi:T\to S$. We have the following diagram:
		\[
		\begin{tikzcd}
		C\times T \ar[r,"id\times \phi"] \ar[d,"p_T"] & C\times S \ar[d,"p_S"]\\
		T \ar[r,"\phi"] & S
		\end{tikzcd}
		\]
		Then the natural morphism
		$$\phi^*p_{S*}(F)\to (p_T)_*(id\times \phi)^*F$$ is an isomorphism.
	\end{enumerate}
\end{lemma}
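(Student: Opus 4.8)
The plan is to recognize the whole statement as an instance of cohomology and base change, the key geometric input being that for each $s\in S$ the fibre $F|_{C\times s}$ is a torsion sheaf, hence supported on a zero-dimensional subset of $C$. First I would record the two consequences of this: $H^1(C, F|_{C\times s})=0$ (the support has dimension $0$, so by Grothendieck vanishing there is no higher cohomology), and $\dim_{k(s)} H^0(C, F|_{C\times s}) = \deg(F|_{C\times s}) = d$ (a torsion sheaf is a skyscraper, so its length equals $h^0$). These vanishings hold at every point of $S$, with no reducedness hypothesis, so I cannot simply invoke Grauert's theorem; instead I would run the argument through the Grothendieck complex, which works for arbitrary, possibly non-reduced, $S$ and handles both parts uniformly.

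Concretely, since $p_S\colon C\times S\to S$ is projective and $F$ is $S$-flat, locally on $S=\operatorname{Spec}A$ there is a bounded complex $K^\bullet$ of finite free $A$-modules, which I may take concentrated in degrees $0$ and $1$ (as $C\times S\to S$ has relative dimension $1$), with the universal property that for every $A$-algebra $B$ one has $H^i(C\times_S\operatorname{Spec}B,\,F_B)\cong H^i(K^\bullet\otimes_A B)$, compatibly with further base change. The vanishing of $H^1$ on fibres says $\operatorname{coker}(d^0)\otimes_A k(s)=H^1(K^\bullet\otimes_A k(s))=0$ for every $s$, where $d^0\colon K^0\to K^1$; since $\operatorname{coker}(d^0)$ is finitely generated with vanishing fibres everywhere, Nakayama forces $\operatorname{coker}(d^0)=0$, i.e. $d^0$ is surjective.

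A surjection of finite free modules splits, so $\ker d^0$ is a direct summand of $K^0$, hence locally free, and $p_{S*}F = H^0(K^\bullet) = \ker d^0$. This proves $p_{S*}F$ is locally free, and comparing with the fibrewise computation its rank is $\dim_{k(s)} H^0(C, F|_{C\times s}) = d$. Taking $B=k(s)$ in the universal property, the splitting of $d^0$ survives base change, so $\ker(d^0)\otimes k(s)=\ker(d^0\otimes k(s))=H^0(K^\bullet\otimes k(s))$, which identifies the natural map $p_{S*}(F)|_s \xrightarrow{\ \sim\ } H^0(C, F|_{C\times s})$ as an isomorphism; this establishes (i).

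For (ii) the same splitting is the whole point: because $d^0$ is a split surjection of locally free sheaves, its kernel commutes with arbitrary base change, $\ker(d^0)\otimes_A B = \ker(d^0\otimes_A B)$. Applying this to the sheaf of algebras corresponding to $\phi\colon T\to S$ and using the universal property of $K^\bullet$ for the base change $id\times\phi$ identifies $\phi^*p_{S*}(F)=\phi^*\ker d^0$ with $\ker(d^0\otimes\mc O_T)=H^0$ of the pulled-back complex, which is $(p_T)_*(id\times\phi)^*F$. The only genuinely delicate point is the one already flagged: since $S$ need not be reduced, I must extract local freeness and base change from the Grothendieck complex (cohomological flatness in degree $0$, forced by the fibrewise vanishing of $H^1$) rather than from constancy of fibre dimensions.
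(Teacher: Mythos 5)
Your proof is correct, but it runs along a different track from the paper's. The paper does not touch the Grothendieck complex: for (i) it cites Hartshorne III.12.11 directly, first with $i=1$ (fibrewise $H^1$-vanishing gives $R^1p_{S*}F=0$ and surjectivity of $\varphi^0(s)$), then with $i=0$ to get local freeness of $p_{S*}F$ and the fibre isomorphism; for (ii) it does \emph{not} prove a base-change statement abstractly, but instead applies part (i) on both $S$ and $T$ (flatness being stable under base change) to see that both $\phi^*p_{S*}(F)$ and $(p_T)_*(id\times\phi)^*F$ are locally free of rank $d$, and then checks via a commutative diagram that the natural map is an isomorphism on every fibre $t\in T$, concluding because a surjective map of vector bundles of equal rank is an isomorphism. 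Your argument instead unwinds the proof of III.12.11 itself: you reduce the representing complex to two terms $K^0\to K^1$ of finite locally free modules (legitimate for a relative curve, by the standard split-off of the top of the complex), use Nakayama on $\operatorname{coker}(d^0)$ to make $d^0$ a split surjection, and read off local freeness of $\ker d^0=p_{S*}F$ together with compatibility with \emph{arbitrary} affine base change $B$ in one stroke, which yields (ii) directly without the pointwise vector-bundle argument. What each approach buys: the paper's is shorter given the reference and stays at the level of fibres, while yours is self-contained, correctly sidesteps Grauert (whose reducedness hypothesis you rightly flag, and which the paper also avoids since III.12.11 needs no reducedness), and proves the stronger universal base-change property rather than deducing the isomorphism a posteriori from equality of ranks and fibrewise checks.
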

\begin{proof}
	Since $F|_{C\times s}$ is a 
	torsion sheaf for all $s\in S$, we have $H^1(C,F|_{C\times s})=0$. 
	By \cite[Chapter III, Theorem 12.11(a)]{Ha}  we get 
	$R^1p_{S*}(F)=0$. Using \cite[Chapter III, Theorem 12.11(b)]{Ha}
	(ii) with $i=1$ we get that the morphism 
	$p_{S*}(F)|_s\to H^0(C,F|_{C\times s})$ is surjective. 
	Again using the same  
	with $i=0$ we get that $p_{S*}(F)$ is locally free of rank $d$
	and the map $p_{S*}(F)|_s\to H^0(C,F|_{C\times s})$ is an isomorphism.
	
	Since $F$ is $S$-flat it follows that $(id\times \phi)^*F$ is $T$-flat. 
	Applying the above we see $\phi^*p_{S*}(F)$ and $(p_T)_*(id\times \phi)^*F$ 
	are locally free of rank $d$. For 
	each $t\in T$ we have the commutative diagram:
	\[
	\begin{tikzcd}
	\phi^*p_{S*}(F)|_t=p_{S*}(F)|_{\phi(t)} \ar[r] \ar[d] & (p_T)_*(id\times \phi)^*F|_t \ar[d] & \\
	H^0(C,F|_{C\times \phi(t)}) \ar[r,equal] & H^0(C,(id\times \phi^*)F|_{C\times t}) 
	\end{tikzcd}
	\]
	By the first part we get that the vertical arrows 
	are isomorphisms. Hence we get that the first row 
	of the diagram is an isomorphism. Therefore
	$$\phi^*p_{S*}(F)\to (p_T)_*(id\times \phi)^*F$$ 
	is a surjective morphism of vector bundles of same rank and hence an isomorphism.
\end{proof}

We define a line bundle on $\mathcal{Q}$.
Let us denote the projections $C\times \mathcal{Q}$ 
to $C$ and $\mathcal{Q}$ by $p_{C}$ and $p_{Q}$ respectively. 
Then we have the universal quotient 
$p^*_CE \rightarrow \mathcal{B}_{\mc Q}$ over $C\times \mc Q$. By Lemma 
\ref{pullback-lemma}, 
$p_{\mathcal{Q}*} (\mathcal{B}_{\mc Q}) $ is a vector bundle of rank 
$d$. 
\begin{definition}\label{def-O(1)}
	Denote the line bundle 
{\rm det($p_{\mc {Q}*} (\mathcal{B}_{\mc Q})$)} by 
$\mc {O}_{\mc {Q}}(1)$.
\end{definition}

Denote the $d$-th symmetric product of $C$ by $C^{(d)}$.
Recall the Hilbert-Chow map 
$\Phi: \mathcal{Q}\rightarrow C^{(d)}$ 
which sends $[E\rightarrow B]$ to $\sum l(B_{p})p$, 
where $l(B_{p})$ is the length of the $\mathcal{O}_{C,p}$-module $B_{p}$.  
Therefore, we have the pullback 
$\Phi^*:{\rm Pic}(C^{(d)})\to {\rm Pic}(\mathcal{Q})$ 
which is in fact an inclusion. 
To see this, recall that the fibres of
$\Phi$ are projective integral varieties 
\cite[Corollary 6.6]{GS} and $\Phi$ is flat \cite[Corollary 6.3]{GS}. 
Hence $\Phi_*(\mathcal O_{\mc{Q}})=\mc{O}_{C^{(d)}}$.
Now by projection formula
$\Phi_{*}\Phi^*L\cong L$ for all $L\in {\rm Pic}(C^{(d)})$
and the statement follows.

The big diagonal is the image of the map $C\times C^{(d-2)}\to C^{(d)}$ 
given by $(x,A)\mapsto 2x +A$. 
Let us denote the big diagonal in $C^{(d)}$ by 
$\Delta$. Let $U_{C}:=C^{(d)}\setminus \Delta$ and 
$\mathcal{U}:=\Phi^{-1}(U_C)$. Then $\mc U\subset \mc Q$.
\begin{lemma}\label{generically the line bundle is O(n)}
	For any line bundle $\mc L\in {\rm Pic}(\mc Q)$, $\exists$ 
	an unique $n\in \mb Z$ such that 
	$(\mc L\otimes \mc O_{\mc Q}(-n))|_{\Phi^{-1}(p)}\cong \mc O_{\Phi^{-1}(p)}$ 
	for all $p\in U_C$. 
\end{lemma}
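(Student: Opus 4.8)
The plan is to analyze the fibres of $\Phi$ over $U_C$ explicitly. We may assume $r:={\rm rk}(E)\geq 2$, the case $r=1$ (where $\Phi$ is an isomorphism and the fibres are points) being trivial. Fix $p=x_1+\cdots+x_d\in U_C$ with the $x_i$ distinct. A point of $\Phi^{-1}(p)$ is a quotient $E\twoheadrightarrow B$ with $B$ supported on $\{x_1,\dots,x_d\}$ and of length one at each $x_i$; such a quotient splits as $\bigoplus_i(E\to k(x_i))$, and each factor is a one-dimensional quotient of the fibre $E_{x_i}:=E\otimes k(x_i)$, i.e. a point of $\mb P(E_{x_i}^\vee)\cong\mb P^{r-1}$. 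Hence $\Phi^{-1}(p)\cong\prod_{i=1}^d\mb P(E_{x_i}^\vee)$, whose Picard group is $\bigoplus_{i=1}^d\mb Z h_i$, with $h_i$ the hyperplane class of the $i$-th factor. First I would record, using the base-change statement of Lemma \ref{pullback-lemma}(ii), that the formation of $p_{\mc Q*}(\mc B_{\mc Q})$, and hence of $\mc O_{\mc Q}(1)=\det p_{\mc Q*}(\mc B_{\mc Q})$, commutes with restriction to the fibre; since $\mc B_{\mc Q}$ restricts on $C\times\Phi^{-1}(p)$ to $\bigoplus_i k(x_i)\boxtimes\mathrm{pr}_i^*\mc O_{\mb P(E_{x_i}^\vee)}(1)$, pushing forward and taking determinants gives $\mc O_{\mc Q}(1)|_{\Phi^{-1}(p)}\cong\mc O(1,\dots,1)=\mc O(\sum_i h_i)$.

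Given $\mc L$, its restriction to the fibre is some $\mc O(\sum_i a_i h_i)$, and the heart of the argument is to show the $a_i$ are all equal and independent of $p$. To see this I would pull everything back along the \'etale $S_d$-cover $V:=C^d\setminus(\text{diagonals})\to U_C$ of ordered distinct $d$-tuples. Over $V$ the family $\mc U_V:=\mc U\times_{U_C}V$ acquires a genuine product structure $\prod_{i=1}^d\mb P(E_i^\vee)$, where $E_i$ is the pullback of $E$ along the $i$-th evaluation $V\to C$; iterating the projective-bundle formula gives $\mathrm{Pic}(\mc U_V)=\mathrm{pr}^*\mathrm{Pic}(V)\oplus\bigoplus_{i=1}^d\mb Z h_i$. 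The pullback $\mc L_V$ of $\mc L$ to $\mc U_V$ is $S_d$-invariant because it descends to $\mc U$. Now $\sigma\in S_d$ acts on this decomposition by its action on $V$ on the first summand, and, because the tautological quotient $\mc O(1)$ is functorial (so $\sigma^*E_i\cong E_{\sigma^{-1}(i)}$ induces $\sigma^*h_i=h_{\sigma^{-1}(i)}$ with no horizontal correction), it permutes the $h_i$ on the second summand. Writing $\mc L_V=\mathrm{pr}^*M\otimes\mc O(\sum_i a_i h_i)$ and comparing the $\bigoplus_i\mb Z h_i$-components of $\sigma^*\mc L_V$ and $\mc L_V$ forces $a_{\sigma(i)}=a_i$ for all $\sigma$, hence $a_1=\cdots=a_d=:n$. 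As $\mc L_V$ is a single line bundle, the class $n\sum_i h_i$ is the same on every fibre, so restricting to the fibre $Y_v\cong\Phi^{-1}(p)$ over any $v\mapsto p$ (the horizontal $\mathrm{pr}^*\mathrm{Pic}(V)$ part dies on $Y_v$) yields $\mc L|_{\Phi^{-1}(p)}\cong\mc O(n,\dots,n)$ for every $p\in U_C$ with the same $n$. Combined with the first paragraph this gives $(\mc L\otimes\mc O_{\mc Q}(-n))|_{\Phi^{-1}(p)}\cong\mc O$.

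For uniqueness, if $n$ and $n'$ both work then $\mc O_{\mc Q}(n-n')|_{\Phi^{-1}(p)}\cong\mc O(n-n',\dots,n-n')$ is trivial on $(\mb P^{r-1})^d$, which (as $r\geq 2$) forces $n=n'$. I expect the main obstacle to be the balancing step: a priori $\mc L|_{\Phi^{-1}(p)}=\mc O(a_1,\dots,a_d)$ could be unbalanced, and ruling this out is exactly where the symmetry of the configuration---encoded by the $S_d$-action on the ordered cover $V$, i.e. the monodromy permuting the points $x_i$ and hence the factors---is essential. Everything else is either the fibrewise computation of $\mc O_{\mc Q}(1)$ via Lemma \ref{pullback-lemma} or a formal consequence of the projective-bundle Picard formula.
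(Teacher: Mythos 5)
Your proposal is correct, and its skeleton coincides with the paper's: the paper also computes $\mc O_{\mc Q}(1)$ on fibres via Lemma \ref{pullback-lemma} (obtaining $\tilde\psi^{*}\mc O_{\mc Q}(1)=\bigotimes_i p_i^*\mc O_{\mb P(E)}(1)$ on the ordered cover), also passes to the cover by ordered distinct $d$-tuples, and also uses $S_d$-invariance to equalize the multidegrees. Where you genuinely diverge is in the mechanism for proving that the multidegree is constant in $p$. The paper treats the exponents as integer-valued functions $n_i$ on $\psi^{-1}(U_C)$ and proves constancy of $n_1$ by hand: it restricts to one-parameter slices $C\setminus\{c_2,\dots,c_d\}$ (where the family is a product of a projective bundle over a curve with fixed projective spaces, forcing local constancy) and then runs an explicit chain of point-swaps through $2d$ distinct points, using the relation $n_{\sigma(i)}(\underline c)=n_i(\sigma(\underline c))$. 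You instead invoke the iterated projective-bundle formula ${\rm Pic}(\prod_V\mb P(E_i^\vee))={\rm pr}^*{\rm Pic}(V)\oplus\bigoplus_i\mb Z h_i$ over the smooth base $V=C^d\setminus({\rm diagonals})$, which makes the coefficients $a_i$ global integers from the start, so constancy in $p$ is automatic and only the balancing $a_1=\cdots=a_d$ remains, which your $S_d$-argument (with the correct observation that $\sigma^*h_i$ permutes the $h_i$ with no horizontal correction, since the $h_i$ are pullbacks along the permuted projections) settles. Your route is shorter and more structural at the cost of citing the Picard computation for projectivized bundles over a regular base; the paper's slice-and-swap argument is more elementary and needs nothing beyond local constancy on slices. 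Two small points: your identification $\mc U\times_{U_C}V\cong\prod_V\mb P(E_i^\vee)$ is asserted rather than proved; the paper only records the fibrewise isomorphism (citing earlier work for the map $\tilde\psi:U_Z\to\mc U$ with $U_Z\subset\mb P(E)^d$), and the cleanest fix is either to note that the induced proper bijective morphism $U_Z\to\mc U\times_{U_C}V$ between smooth varieties in characteristic zero is an isomorphism, or, as the paper does, to bypass the identification entirely and run your Picard argument on $U_Z$ with $\tilde\psi^*\mc L$, whose $S_d$-invariance follows from $\tilde\psi\circ\sigma=\tilde\psi$. Also, your flagging of $r\geq 2$ for uniqueness is in fact more careful than the paper's ``the uniqueness of $n$ is obvious,'' since for $r=1$ the fibres are points and uniqueness genuinely fails (though calling that case ``trivial'' elides this); this degenerate case is harmless for the paper's applications.
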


\begin{proof}
	Let $\pi:\mb P(E) \to C$ 
	be the projective bundle associated to $E$ 
	and let $\mc O_{\mb P(E)}(1)$ be the universal line bundle over $\mb P(E)$. 
	Let $Z=\mb P(E)^d$. Let $p_i:Z\to \mb P(E)$ be the $i$-th projection. 
	Let $\pi_{d}:Z \to C^d$ be the product map.
	The symmetric group $S_d$ acts on $Z$ and the map 
	$\pi_d$ is equivariant for this action.
	Let $\psi:C^d\to C^{(d)}$ be the quotient map. 
	Define $U_{Z}:=(\psi\circ \pi_d)^{-1}(U)$. 
	
	Let $c\in C$ be a closed point and let $k_{c}$ denote 
	the skyscraper sheaf supported at $c$. A closed 
	point of $\mb P(E)$ which maps to $c\in C$ 
	corresponds to a quotient $E\to E_c\to k_c$.
	Recall that we have a map \cite[Theorem 2.2(a)]{G19}
	$$\tilde{\psi}:U_Z\to \mc U$$
	which sends a closed point 
	$$(E_{c_i}\to k_{c_i})^d_{i=1}\in U_Z$$ 
	to the quotient 
	$$E\to \bigoplus\limits_i E_{c_i}\to \bigoplus\limits_i k_{c_i}\in \mc U\,.$$
	So we have a commutative diagram:
	\[
	\begin{tikzcd}
	U_Z            \ar[r,"\tilde{\psi}"] \ar[d,"\pi_d"] &  \mc U \ar[d,"\Phi"] \\
	\psi^{-1}(U_C) \ar[r,"\psi"]                        &      U_C    
	\end{tikzcd}
	\]
	Moreover, if $\underline{c}=(c_1,\ldots,c_d)\in \psi^{-1}(U_C)$, 
	then by \cite[Lemma 6.5]{GS} $\tilde{\psi}$ induces an isomorphism
	$$\prod \mb P(E_{c_i})=\pi_d^{-1}(\underline{c})\xrightarrow{\sim} \Phi^{-1}(\psi(\underline{c}))\,.$$
	Applying Lemma \ref{pullback-lemma} by taking $T=U_Z$, $S=\mc U$
	and $\phi=\tilde \psi$
	and the definition of 
	the map $\tilde \psi$ (see the proof of \cite[Theorem 2.2(a)]{G19}) we see that
	$$\tilde{\psi}^{*} \mc O_{\mc Q}(1)=\bigotimes\limits^d_{i=1}p^*_i\mc O_{\mb P(E)}(1)|_{U_Z}\,.$$
	Hence it is enough to show that $\exists~n\in \mb Z$ such that 
	$\forall~\underline{c}\in \psi^{-1}(U_C)$ 
	$$\tilde{\psi}^*\mc L|_{\pi_d^{-1}(\underline{c})}\cong \bigotimes_{i=1}^d p^*_i\mc O(n)|_{\pi_d^{-1}(\underline{c})}\,.$$   
	For $\underline{c}\in \psi^{-1}(U_C)$ define $n_i(\underline{c})\in \Z$ 
	using the equation 
	$$\tilde\psi^*\mathcal{L}|_{ \pi_d^{-1}(\underline{c})}=
			\bigotimes_{i=1}^d p_i^*\mathcal{O}_{\mb P(E_{c_i})}(n_i(\underline{c}))\,.$$ 
	We may 
	view the $n_i$ as functions $n_i:\psi^{-1}(U_C)\to \Z$.
	Since the line bundle $\t\psi^*\mc L$ 
	is invariant under the action of the group $S_d$, it follows that 
	\begin{equation}\label{permute-index-line-bundles}
		n_{\sigma(i)}(\underline{c})=n_i(\sigma(\underline{c})).
	\end{equation}
	Here $\sigma(\underline{c}):=(c_{\sigma(1)},\ldots,c_{\sigma(d)})$.
	Hence it suffices to show that $n_1$ is a constant function. 
	
	Let $c_2,\ldots,c_d$ be distinct points in $C$.
	Define $V:=C\setminus \{c_{2},..,c_{d}\}$ 
	and a map 
	$$i:V\hookrightarrow \psi^{-1}(U_{C})\qquad \qquad i(c):= (c,c_{2},..,c_{d})\,.$$ 
	Then $\pi^{-1}_d(V)$ is equal to 
	$\mathbb{P}(E|_{V})\times \mathbb{P}(E_{c_{2}})\times ... \times \mathbb{P}(E_{c_{d}})$.
	The restriction of $\tilde{\psi}^*\mc L$ to 
	$\mathbb{P}(E|_{V})\times \mathbb{P}(E_{c_{2}})\times ... \times \mathbb{P}(E_{c_{d}})$
	is isomorphic to 
	$$\pi^*M\otimes p_1^*\mc O_{\mb P(E\vert_V)}(a_1)\otimes p_2^*\mc O_{\mb P(E_{c_2})}(a_2)\ldots
	\otimes p_d^*\mc O_{\mb P(E_{c_d})}(a_d)\,,$$
	where $M$ is a line bundle on $V$. Further restricting to $(c,c_2,\ldots,c_d)$
	and $(c',c_2,\ldots,c_d)$, where $c,c'\in V$, we see that 
	\begin{equation}\label{Picard-computation-e1}
		n_i(c,c_{2},..,c_{d})=n_i(c',c_2,\ldots,c_d)\qquad \qquad \forall~i\,.
	\end{equation}
	This proves that for distinct points $c,c',c_2,\ldots,c_{d}\in C$ we have 
	\begin{equation}\label{Picard-computation-e2}
		n_i(c,c_{2},..,c_{d})=n_i(c',c_2,\ldots,c_d)\qquad \qquad \forall~i\,.
	\end{equation}
	Choose $2d$ distinct points $c_1,\ldots,c_d,c'_1,\ldots,c'_d$ in $C$. 
	Then using equations \eqref{Picard-computation-e1} and \eqref{Picard-computation-e2} we get
	\begin{align*}
	n_1(c_1,c_2,\ldots,c_d)&=n_1(c'_1,c_2,\ldots,c_d)\\
	&=n_2(c_2,c'_1,\ldots,c_d)\\
	&=n_2(c'_2,c'_1,c_3,\ldots,c_d)\\
	&=n_1(c'_1,c'_2,c_3,\ldots,c_d)\\
	&=\ldots\\
	&=n_1(c'_1,c'_2,\ldots,c'_d)\,.
	\end{align*}
	Finally, for any two points $\underline{c},\underline{c}'\in \psi^{-1}(U_C)$
	choose a third point $\underline{c}''$ such that the coordinates of $\underline{c}''$ 
	are distinct from those of $\underline{c}$ and $\underline{c}'$. Then we see that 
	$n_1(\underline{c})=n_1(\underline{c}'')=n_1(\underline{c}')$. This proves that $n_1$ 
	is the constant function.
	Therefore, $\psi^*\mathcal{L}|_{ \pi_d^{-1}(\underline{c})}$ is of the form 
	$\bigotimes p_i^*\mathcal{O}_{\mb P(E_{c_i})}(n)$, $\forall \underline{c}\in \psi^{-1}(U_C)$. The uniqueness of $n$ is obvious.
\end{proof}

\begin{theorem}\label{thm1}
${\rm Pic}(\mathcal{Q})=\Phi^*{\rm Pic}(C^{(d)})\bigoplus \Z[\mathcal{O}_{\mathcal{Q}}(1)] $\,.
\end{theorem}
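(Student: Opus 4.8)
The plan is to use Lemma \ref{generically the line bundle is O(n)} to strip off the $\mc O_{\mc Q}(1)$-component of an arbitrary class, and then a descent-along-$\Phi$ argument to identify the remainder with an element of $\Phi^*{\rm Pic}(C^{(d)})$. Directness of the sum and surjectivity onto ${\rm Pic}(\mc Q)$ are treated separately.

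For directness, recall from the proof of Lemma \ref{generically the line bundle is O(n)} that a general fibre of $\Phi$ is isomorphic to $\prod_i \mb P(E_{c_i})\cong (\mb P^{n-1})^d$ and that $\mc O_{\mc Q}(1)$ restricts there to $\bigotimes_i p_i^*\mc O(1)$, which is nontrivial; on the other hand every class in $\Phi^*{\rm Pic}(C^{(d)})$ restricts trivially to every fibre. Hence $\Z[\mc O_{\mc Q}(1)]\cap \Phi^*{\rm Pic}(C^{(d)})=0$, and together with the injectivity of $\Phi^*$ already recorded this shows the sum is direct. (This step uses ${\rm rk}\,E\geq 2$; for a line bundle $\mc O_{\mc Q}(1)$ would itself lie in $\Phi^*{\rm Pic}(C^{(d)})$.)

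For surjectivity, take $\mc L\in {\rm Pic}(\mc Q)$ and let $n$ be the integer furnished by Lemma \ref{generically the line bundle is O(n)}, so that $\mc L':=\mc L\otimes \mc O_{\mc Q}(-n)$ is trivial on $\Phi^{-1}(p)$ for all $p\in U_C$. Over $U_C$ the morphism $\Phi\colon \mc U\to U_C$ is proper and flat with integral fibres and satisfies $\Phi_*\mc O=\mc O$, so a seesaw / cohomology-and-base-change argument gives that $M:=\Phi_*(\mc L'|_{\mc U})$ is a line bundle on $U_C$ and that the adjunction map $\Phi^*M\to \mc L'|_{\mc U}$ is an isomorphism. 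Since $C^{(d)}$ is smooth and $\Delta$ is a prime divisor, the restriction ${\rm Pic}(C^{(d)})\to {\rm Pic}(U_C)$ is surjective; lifting $M$ to $\bar M$ and setting $\mc N:=\mc L'\otimes \Phi^*\bar M^{-1}$, we obtain a class that is trivial on $\mc U$.

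It remains to identify the kernel of ${\rm Pic}(\mc Q)\to {\rm Pic}(\mc U)$. As $\mc Q$ is smooth and $\mc Q\setminus \mc U=\Phi^{-1}(\Delta)$ is of pure codimension one, this kernel is spanned by the classes of the irreducible components of $\Phi^{-1}(\Delta)$. The crux of the argument, and the step I expect to be the main obstacle, is to show that $\Phi^*\Delta$ is a single reduced and irreducible divisor $D$, so that $[D]=\Phi^*[\Delta]$ and the kernel is contained in $\Phi^*{\rm Pic}(C^{(d)})$. Irreducibility follows since $\Phi$ is flat with irreducible fibres and $\Delta$ is irreducible. Reducedness follows from the general principle that a scheme flat over a reduced base with geometrically reduced fibres is reduced: the morphism $\Phi^{-1}(\Delta)\to \Delta$ is flat as a base change of the flat map $\Phi$ \cite[Corollary 6.3]{GS}, the divisor $\Delta$ is reduced, and the fibres of $\Phi$ are integral \cite[Corollary 6.6]{GS}, hence geometrically reduced over $\mb C$. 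Granting this, $\mc N=\Phi^*(a\Delta)$ for some $a\in \Z$, whence $\mc L'\in \Phi^*{\rm Pic}(C^{(d)})$ and $\mc L=\mc O_{\mc Q}(n)\otimes \Phi^*(\bar M\otimes \mc O(a\Delta))$, as required. I emphasize that without the integrality of the fibres of $\Phi$ one would only obtain the class of the reduced preimage, which a priori could be a proper $\Z$-submultiple of $\Phi^*[\Delta]$; this is precisely where \cite[Corollary 6.6]{GS} is indispensable.
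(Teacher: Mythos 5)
Your proposal is correct, and its second half takes a genuinely different route from the paper. The paper's proof is very short: after applying Lemma \ref{generically the line bundle is O(n)} exactly as you do, it cites \cite[Lemma 2.1.2]{MR-ss} --- a descent lemma for flat projective morphisms with integral fibres, whose point is that triviality of $\mc L'$ on fibres is a \emph{closed} condition (semicontinuity applied to $\mc L'$ and $\mc L'^{-1}$, using integrality), so triviality over the dense open $U_C$ propagates to all fibres and $\mc L'$ descends globally; uniqueness then follows from the uniqueness of $n$ in the lemma together with injectivity of $\Phi^*$. You instead descend only over $U_C$ (cohomology and base change plus $\Phi_*\mc O=\mc O$), extend the descended bundle across the boundary using smoothness of $C^{(d)}$, and control the kernel of ${\rm Pic}(\mc Q)\to {\rm Pic}(\mc U)$ by excision, which obliges you to prove that $\Phi^{-1}(\Delta)$ is integral: irreducible because $\Phi$ is flat (hence open) with irreducible fibres over the irreducible $\Delta$, and reduced by the standard fact that a flat, finite-type morphism with geometrically reduced fibres over a reduced base has reduced total space (unproblematic over $\mb C$). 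Both steps are valid, and your closing remark is a sharp diagnosis: the integrality of the fibres from \cite{GS} enters your argument at exactly the place (multiplicity one of $\Phi^*\Delta$, so that the generator $[D]$ of the kernel equals $\Phi^*[\Delta]$ rather than a fraction of it) where it is hidden inside the black-boxed lemma in the paper. The paper's route buys brevity; yours buys self-containedness and makes visible why the boundary divisor contributes nothing beyond $\Phi^*{\rm Pic}(C^{(d)})$. Your directness argument via restriction to a general fibre is the same in substance as the paper's appeal to uniqueness (two cosmetic points: for general $E$ the fibre is $\prod_i\mb P(E_{c_i})\cong (\mb P^{r-1})^d$ with $r={\rm rk}\,E$, not necessarily $(\mb P^{n-1})^d$; and your parenthetical that the statement implicitly requires ${\rm rk}\,E\geq 2$ is a correct observation that applies to the paper's formulation as well).
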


\begin{proof}
Let $\mc L\in {\rm Pic}(\mc Q)$. 
By \cite[Corollary 6.3]{GS} and \cite[Corollary 6.4]{GS} 
the morphism $\Phi$ is flat and fibres of $\Phi$ are 
integral. Then by \cite[Lemma 2.1.2]{MR-ss} and 
Lemma \ref{generically the line bundle is O(n)} we get 
that $\mc L\otimes \mc O_{\mc Q}(-n)=\Phi^*\mc M$ for some 
$\mc M\in {\rm Pic}(C^{(d)})$. Hence 
$\mc L=\Phi^*\mc M\otimes \mc O_{\mc Q}(n)$. The uniqueness 
of such an expression follows from the statement on 
uniqueness in Lemma \ref{generically the line bundle is O(n)}.
\end{proof}

For a  projective variety $X$ over $k$ recall that  
$N^1(X)$ (respectively, $N_1(X)$) is the vector space of 
$\mb R$-divisors (respectively, $1$-cycles) modulo 
numerical equivalences \cite[\S 1.4]{Laz}. It is known 
that $N^1(X)$ and $N_1(X)$ are finite dimensional and 
the intersection product defines a non-degenerate pairing
$$N^1(X)\times N_1(X)\to \mb R\qquad \qquad ([\beta],[\gamma])\mapsto [\beta]\cdot [\gamma]\,.$$

We will compute $N^1(\mc Q)$ and $N_1(\mc Q)$. 
Let $\underline{c}\in U_C\subset C^{(d)}$. As we saw 
in the proof of Theorem \ref{thm1},  
$$\Phi^{-1}(\underline{c})\cong \prod \mb P(E_{c_i})\,.$$ 
Let $\mb P^1\hookrightarrow \mb P(E_{c_1})$ be a line 
and let $v_i\in \mb P(E_{c_i})$ for $i\geq 2$. Then we have an embedding:
\begin{equation}\label{line-fiber-Phi}
	\mb P^1\cong \mb P^1\times v_2\times\ldots\times v_d\hookrightarrow \mb P(E_{c_1}) \times \prod\limits_{i\geq 2}\mb P(E|_{c_i})=\Phi^{-1}(\underline{c})\subset \mc Q\,.
\end{equation}
\begin{definition}\label{line-fiber-hc}
Let us denote the class of this curve in $N_1(\mc Q)$ by $[l]$. 
\end{definition}

\begin{corollary} \label{neron-severi of quot scheme}
 $N^1(\mc Q)=\Phi^*N^1(C^{(d)})\bigoplus \R[\mathcal{O}_{\mathcal{Q}}(1)]$.
\end{corollary}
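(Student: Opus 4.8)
The plan is to bootstrap from Theorem \ref{thm1} by tensoring with $\R$ and then passing to the quotient by numerical equivalence, using the fiber-line class $[l]$ of Definition \ref{line-fiber-hc} to isolate the $\mc O_{\mc Q}(1)$-direction. For a smooth projective variety the natural map $\textnormal{Pic}(\mc Q)\otimes_{\Z}\R\to N^1(\mc Q)$ is surjective, since $N^1(\mc Q)$ is by definition spanned by classes of line bundles. Tensoring the decomposition of Theorem \ref{thm1} with $\R$ gives $\textnormal{Pic}(\mc Q)\otimes\R=\Phi^*\big(\textnormal{Pic}(C^{(d)})\otimes\R\big)\oplus\R[\mc O_{\mc Q}(1)]$. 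Because $\Phi^*$ is compatible with numerical equivalence and $\textnormal{Pic}(C^{(d)})\otimes\R\to N^1(C^{(d)})$ is itself surjective, the image of the first summand in $N^1(\mc Q)$ is exactly $\Phi^*N^1(C^{(d)})$. Hence $N^1(\mc Q)=\Phi^*N^1(C^{(d)})+\R[\mc O_{\mc Q}(1)]$, which gives the spanning half of the statement.

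It remains to check that this sum is direct, i.e.\ that $\Phi^*N^1(C^{(d)})\cap\R[\mc O_{\mc Q}(1)]=0$. Here I would pair against the curve class $[l]$ of \eqref{line-fiber-Phi}. Since $[l]$ lies inside a single fiber of $\Phi$ it is contracted, so $\Phi_*[l]=0$, and the projection formula yields $\Phi^*D\cdot[l]=D\cdot\Phi_*[l]=0$ for every $D\in N^1(C^{(d)})$. On the other hand, by the computation of $\tilde{\psi}^*\mc O_{\mc Q}(1)$ in the proof of Lemma \ref{generically the line bundle is O(n)}, the restriction of $\mc O_{\mc Q}(1)$ to the fiber $\Phi^{-1}(\underline{c})\cong\prod_i\mb P(E_{c_i})$ is $\bigotimes_i p_i^*\mc O_{\mb P(E_{c_i})}(1)$; restricting further to the line $[l]$, which is a line in the factor $\mb P(E_{c_1})$ and a point in the others, gives $\mc O_{\mb P^1}(1)$, so $[\mc O_{\mc Q}(1)]\cdot[l]=1$. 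Consequently, if $\Phi^*D=\lambda[\mc O_{\mc Q}(1)]$ then intersecting with $[l]$ forces $\lambda=0$ and therefore $\Phi^*D=0$, which establishes directness.

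To identify the first summand with $N^1(C^{(d)})$ rather than merely its image, I would note that $\Phi^*\colon N^1(C^{(d)})\to N^1(\mc Q)$ is injective: as $\Phi$ is surjective, every curve $\gamma\subset C^{(d)}$ is dominated by a curve $\gamma'\subset\mc Q$, whence $\Phi^*D\cdot\gamma'=(\deg\Phi|_{\gamma'})\,(D\cdot\gamma)$, so a class with $\Phi^*D\equiv 0$ must already be numerically trivial. Everything in this argument is formal apart from the single evaluation $[\mc O_{\mc Q}(1)]\cdot[l]=1$, which is where the explicit description of $\mc O_{\mc Q}(1)$ on the fibers of $\Phi$ from Lemma \ref{generically the line bundle is O(n)} is genuinely used; I therefore do not expect a real obstacle, the only care needed being the bookkeeping between $\textnormal{Pic}\otimes\R$ and $N^1$.
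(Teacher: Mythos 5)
Your proof is correct and takes essentially the same route as the paper's own proof: spanning from Theorem \ref{thm1}, directness by pairing against the fiber-line class $[l]$ using $[\mc O_{\mc Q}(1)]\cdot[l]=1$ together with the projection formula, and injectivity of $\Phi^*$ from surjectivity of $\Phi$ (which the paper simply cites as \cite[Example 1.4.4]{Laz}, while you reprove it). The only differences are expository: you make explicit the ${\rm Pic}\otimes_{\Z}\R\to N^1$ bookkeeping and the fiber restriction computing $[\mc O_{\mc Q}(1)]\cdot[l]=1$, both of which the paper leaves implicit.
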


\begin{proof}
Since $\Phi$ is surjective, 
$N^1(C^{(d)})\to N^1(\mc Q)$ is an inclusion \cite[Example 1.4.4]{Laz}. 
Note that $\mc O_{\mc Q}(1)\neq 0$ in $N^1(\mc Q)$
since $[\mc O_{\mc Q}(1)]\cdot [l]=1$. 
Hence $\mc O_{\mc Q}(1)\neq 0$ in $N^1(\mc Q)$. 
This also shows that $\mc O_{\mc Q}(1)\notin \Phi^*N^1(C^{(d)})$.
 
By theorem \ref{thm1}, 
we know that any $N^1(\mc Q)$ is generated by 
$\Phi^*N^1(C^{(d)})$ and $[\mc O_{\mc Q}(1)]$. 
The only thing left is to show that 
$$\Phi^*N^1(C^{(d)}) \cap \mb R[\mc O_{\mc Q}(1)]=0\,.$$
For $a\in \mb R$ if $a[\mc O_{\mc Q}(1)]\in N^1(C^{(d)})$, 
then $a[\mc O_{\mc Q}(1)]\cdot [l]=a=0$. Hence the result follows.
\end{proof}

Hence, it follows from Corollary \ref{neron-severi of quot scheme} that 

\begin{proposition} \label{dim-N^1}
	If $g=1$ or $C$ is very general with $g\geq 2$, then
	${\rm dim}_\R N^1(\mc Q)=3$. 
\end{proposition}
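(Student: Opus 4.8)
The plan is to read this off directly from Corollary~\ref{neron-severi of quot scheme}, which already establishes the direct sum decomposition
$$N^1(\mc Q)=\Phi^*N^1(C^{(d)})\bigoplus \R[\mc O_{\mc Q}(1)]\,.$$
Since this is a direct sum of vector spaces, dimensions add, so
$${\rm dim}_\R N^1(\mc Q)={\rm dim}_\R N^1(C^{(d)})+1\,.$$
The only remaining input is the value of ${\rm dim}_\R N^1(C^{(d)})$, and the whole point is that under the stated hypotheses this equals $2$.

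First I would invoke the known fact, recorded in Section~\ref{section-nef-C^{(d)}}, that when $g=1$ or $C$ is very general of genus $g\geq 2$, the Neron--Severi space $N^1(C^{(d)})$ is two-dimensional, with any two of $\theta_d$, $[\Delta_d/2]$, $[x]$ forming a basis. (For an elliptic curve and for the very general case this is the classical computation following \cite[\S 2]{Pa}; it is precisely the hypothesis that singles out these two cases, since for special curves of higher genus the Picard number of $C^{(d)}$ can jump.) Substituting ${\rm dim}_\R N^1(C^{(d)})=2$ into the displayed formula yields ${\rm dim}_\R N^1(\mc Q)=3$, which is the assertion.

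There is essentially no obstacle in the argument itself: the content has been front-loaded into Corollary~\ref{neron-severi of quot scheme} (whose proof in turn rests on Theorem~\ref{thm1} and the pairing $[\mc O_{\mc Q}(1)]\cdot[l]=1$ that shows $[\mc O_{\mc Q}(1)]$ is nonzero and independent from the pullback summand). The one place where genuine hypotheses enter is the dimension count for the symmetric product, and that is quoted rather than reproved. Accordingly I would keep the proof to a single sentence, citing Corollary~\ref{neron-severi of quot scheme} together with the stated two-dimensionality of $N^1(C^{(d)})$, and explicitly flag that the hypothesis ``$g=1$ or $C$ very general'' is used solely to guarantee ${\rm dim}_\R N^1(C^{(d)})=2$.
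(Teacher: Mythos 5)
Your proposal is correct and matches the paper's own proof exactly: the paper also deduces the claim immediately from Corollary~\ref{neron-severi of quot scheme} together with the fact, recorded in Section~\ref{section-nef-C^{(d)}}, that $N^1(C^{(d)})$ is two-dimensional under the stated hypotheses. Your extra remark that the hypothesis ``$g=1$ or $C$ very general'' enters only through the dimension count for $N^1(C^{(d)})$ is an accurate gloss, not a deviation.
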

\begin{proof}
	We already saw that $N^1(C^{(d)})$ is of dimension $2$.
	The Proposition follows.
\end{proof}

To compute $N_1(\mc Q)$ 
we first construct a section of $\Phi:\mc Q\to C^{(d)}$. 
Over $C\times C^{(d)}$ we have the universal divisor $\Sigma$ 
which gives us the universal quotient 
$\mc O_{C\times C^{(d)}}\to \mc O_{\Sigma}$. 
Choose a surjection 
$E\to L$ over $C$, where $L$ is a line bundle on $C$. 
This induces a surjection 
$E\otimes \mc O_{C\times C^{(d)}}\to L\otimes \mc O_{C\times C^{(d)}}$.
 Then the composition 
$$E\otimes \mc O_{C\times C^{(d)}} \to L\otimes \mc O_{C\times C^{(d)}} \to L\otimes \mc O_{\Sigma}$$ 
gives us a morphism 
\begin{equation}\label{section-HC}
	\eta:C^{(d)}\to \mc Q
\end{equation}
which is easily seen to be a section of $\Phi$.
 
\begin{corollary}
$N_1(\mc Q)=N_1(C^{(d)})\oplus \mb R[l]$
where $N_1(C^{(d)})\hookrightarrow N_1(\mc Q)$ 
is the morphism given by the pushforward $\eta_*$.
\end{corollary}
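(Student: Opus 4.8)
The plan is to play off the two decompositions we already have, using the perfect intersection pairing $N^1(\mc Q)\times N_1(\mc Q)\to\mb R$. First I would record that $\eta$ is a section of $\Phi$, so $\Phi\circ\eta=\mathrm{id}_{C^{(d)}}$ and therefore $\Phi_*\circ\eta_*=\mathrm{id}$ on $N_1(C^{(d)})$; this shows both that $\eta_*$ descends to numerical classes and that it is injective. Since the pairing on $C^{(d)}$ is non-degenerate, $\dim N_1(C^{(d)})=\dim N^1(C^{(d)})=:r$, and since the pairing on $\mc Q$ is non-degenerate, Corollary \ref{neron-severi of quot scheme} gives $\dim N_1(\mc Q)=\dim N^1(\mc Q)=r+1$. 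Hence it suffices to show that the $r+1$ classes $\eta_*\gamma_1,\ldots,\eta_*\gamma_r,[l]$ are linearly independent, where $\gamma_1,\ldots,\gamma_r$ is any basis of $N_1(C^{(d)})$.

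To prove independence I would pair these classes against the basis $\Phi^*D_1,\ldots,\Phi^*D_r,[\mc O_{\mc Q}(1)]$ of $N^1(\mc Q)$, where $D_1,\ldots,D_r$ is a basis of $N^1(C^{(d)})$, and examine the resulting Gram matrix. All the needed entries come from the projection formula. Because $l$ lies in a fibre of $\Phi$ we have $\Phi_*[l]=0$, hence $\Phi^*D_i\cdot[l]=D_i\cdot\Phi_*[l]=0$; because $\Phi\circ\eta=\mathrm{id}$ we get $\Phi^*D_i\cdot\eta_*\gamma_j=D_i\cdot\Phi_*\eta_*\gamma_j=D_i\cdot\gamma_j$, the Gram matrix $A$ of the non-degenerate pairing on $C^{(d)}$; and $\mc O_{\mc Q}(1)\cdot[l]=1$ as noted in the proof of Corollary \ref{neron-severi of quot scheme}. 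The remaining entries $\mc O_{\mc Q}(1)\cdot\eta_*\gamma_j$ will turn out to be irrelevant.

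Writing the Gram matrix in block form gives
\[
M=\begin{pmatrix} A & 0\\ \ast & 1\end{pmatrix},
\]
which is block lower-triangular, so $\det M=\det A\neq 0$ since $A$ is the matrix of a non-degenerate pairing in chosen bases. Linear independence is then immediate: if $\sum_j c_j\eta_*\gamma_j+c_0[l]=0$, pairing against each element of the $N^1(\mc Q)$-basis shows $M$ kills the coefficient vector, so all $c_j,c_0$ vanish. Thus the $r+1$ classes form a basis of $N_1(\mc Q)$, and the direct sum decomposition $N_1(\mc Q)=\eta_*N_1(C^{(d)})\oplus\mb R[l]$ follows.

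The point I would emphasize—rather than a genuine obstacle—is the vanishing of the top-right block $\Phi^*D_i\cdot[l]=0$: it forces $M$ to be block-triangular, so one never needs to identify $\eta^*\mc O_{\mc Q}(1)$ and the non-degeneracy of $M$ comes for free from that of $A$. The only non-formal inputs are $\Phi_*[l]=0$ (as $l$ sits in a fibre) and $\mc O_{\mc Q}(1)\cdot[l]=1$, both already established, so once the pairing is set up correctly the argument is essentially a determinant computation.
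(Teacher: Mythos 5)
Your proof is correct, but it reaches the conclusion by a different mechanism than the paper. You argue abstractly: since the intersection pairing identifies $N_1$ with the dual of $N^1$, Corollary \ref{neron-severi of quot scheme} forces $\dim N_1(\mc Q)=r+1$, and then the block-triangular Gram matrix $\left(\begin{smallmatrix} A & 0\\ \ast & 1\end{smallmatrix}\right)$ shows $\eta_*\gamma_1,\dots,\eta_*\gamma_r,[l]$ are independent, hence a basis. The paper instead proves spanning constructively: it exhibits the explicit projector identity
\[
\gamma=\eta_*\Phi_*\gamma+\Bigl([\mc O_{\mc Q}(1)]\cdot \bigl(\gamma-\eta_*\Phi_*\gamma\bigr)\Bigr)[l]
\]
for every $\gamma\in N_1(\mc Q)$, verified by pairing against the generators $\Phi^*D'$ and $[\mc O_{\mc Q}(1)]$ of $N^1(\mc Q)$; injectivity of $\eta_*$ and $[l]\notin \eta_*N_1(C^{(d)})$ are checked exactly as in your first two observations. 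The two arguments consume the same numerical inputs --- $\Phi_*\circ\eta_*=\mathrm{id}$, $\Phi_*[l]=0$, $[\mc O_{\mc Q}(1)]\cdot[l]=1$, the projection formula, and non-degeneracy of the pairing --- so neither is more demanding; but they trade different conveniences. Your route never has to guess the decomposition formula: the vanishing block $\Phi^*D_i\cdot[l]=0$ does all the work and the entries $[\mc O_{\mc Q}(1)]\cdot\eta_*\gamma_j$ (which encode $\eta^*\mc O_{\mc Q}(1)$, computed only later in the paper via Lemma \ref{restriction of O(1) to symmetric product}) are genuinely irrelevant. The paper's route yields more than the bare statement: an explicit formula for the $N_1(C^{(d)})$- and $[l]$-components of any curve class, which is the practically useful form when one later pairs nef divisor candidates against specific curves; it also does not need to invoke equality of dimensions of $N_1$ and $N^1$, though that is standard from \cite[\S 1.4]{Laz} and your use of it is unobjectionable.
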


\begin{proof}
Since $\Phi\circ \eta=id_{C^{(d)}}$ we have that $\eta_*$ is an injection. 
Also since $[\mc O_{\mc Q}(1)]\cdot [l]=1$, we have $[l]\neq 0$. 
We claim that $[l]\notin N_1(C^{(d)})$. If not, assume that 
$[l]=\eta_*[\gamma]$ for $[\gamma]\in N^1(C^{(d)})$. Then for every $\beta\in N^1(C^{(d)})$
we have 
$$[l]\cdot \Phi^*\beta=\Phi_*([l])\cdot \beta = 0=\gamma \cdot \beta\,.$$
This proves that $\gamma=0$. 

Let $\gamma \in N_1(\mc Q)$. Then we claim that 
$$\gamma=\eta_*\Phi_*\gamma+\bigg([\mc O_{\mc Q}(1)]\cdot (\gamma-\eta_*\Phi_*\gamma)\bigg)[l]\,\,.$$
This can be seen as follows. It is enough to show that $\forall~D\in N^1(\mc Q)$, 
$$[D]\cdot \gamma=[D]\cdot (\eta_*\Phi_*\gamma)+([\mc O_{\mc Q}(1)]\cdot \gamma)[D]\cdot [l]\,.$$
By Corollary \ref{neron-severi of quot scheme}, 
it is enough to consider the case when $D=\Phi^*D'$ 
where $D'\in N^1(C^{(d)})$ or $D=\mc O_{\mc Q}(1)$. 
In the first case the statement follows from projection formula 
and the second case is by definition. This completes the 
proof of the Corollary.
\end{proof}

Let $p_C:C\times \mc Q \to \mc Q$ and $p_{\mc Q}:C\times \mc Q \to C$ 
be the projections. Let $\mc B_{\mc Q}$ denote the universal quotient on 
$C\times \mc Q$. 
For a vector bundle $F$ over $C$, we 
define 
$$B_{F,\mc Q}:={\rm det}(p_{\mc Q*}(\mc B_{\mc Q}\otimes p^*_CF))\,.$$

\begin{lemma}\label{lb-lemma}
	Suppose we are given a map $f:T\to \mc Q$. Let 
	$(id\times f)^*\mc B_{\mc Q}=\mc B_T$. Let $p_T:C\times T\to T$ and 
	$p_{1,T}:C\times T \to C$ be the projections. 
	\[
	\begin{tikzcd}
	C\times T \ar[r,"id\times f"] \ar[d,"p_T"] & C\times \mc Q \ar[d,"p_{\mc Q}"] \\
	T \ar[r,"f"] & \mc Q
	\end{tikzcd} 
	\]
	\begin{enumerate}[(i)]
		\item $f^*p_{\mc Q*}(\mc B_{\mc Q}\otimes p^*_C F)\to p_{T*}(\mc B_T\otimes p^*_{1,T}F)$ is an isomorphism.
		\item For a vector bundle $F$ on $C$ define 
		$B_{F,T}:= {\rm det}(p_{T*}(\mc B_T\otimes p^*_{1,T}F))$. Then $f^*B_{F,\mc Q}=B_{F,T}$.
	\end{enumerate}
\end{lemma}

\begin{proof}
	For (i) take $\mc B_{\mc Q}\otimes p^*_CF$ and use Lemma \ref{pullback-lemma}. 
	The assertion (ii) follows from (i) by applying determinant to the isomorphism 
	$$f^*p_{\mc Q*}(\mc B_{\mc Q}\otimes p^*_C F)\xrightarrow{\sim} p_{T*}(\mc B_T\otimes p^*_{1,T}F)\,.$$
\end{proof}

Recall the definition of $\eta$ from equation \eqref{section-HC}, this is a section of $\Phi$. 
For a line bundle $L$ on $C$ we have a line bundle $\mc G_{d,L}$ over $C^{(d)}$ 
(see \cite[page 8]{Pa} for notation).

\begin{lemma}\label{restriction of O(1) to symmetric product}
	Let $\eta$ be defined by a quotient $E\to M\to 0$. 
	Then $$\eta^*B_{L,\mc Q}\cong \mc G_{d,L\otimes M}\,.$$
\end{lemma}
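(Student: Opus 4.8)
The plan is to transport the computation to $C\times C^{(d)}$ using the base-change statement of Lemma~\ref{lb-lemma}, to identify there the pullback of the universal quotient under $\mathrm{id}\times\eta$, and then to recognise the resulting determinant as Pacienza's line bundle $\mc G_{d,\cdot}$.

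First I would apply Lemma~\ref{lb-lemma}(ii) with $T=C^{(d)}$ and $f=\eta$. Writing $q_C\colon C\times C^{(d)}\to C$ and $q\colon C\times C^{(d)}\to C^{(d)}$ for the two projections and setting $\mc B_{C^{(d)}}:=(\mathrm{id}\times\eta)^*\mc B_{\mc Q}$, this gives $\eta^*B_{L,\mc Q}\cong\det\big(q_*(\mc B_{C^{(d)}}\otimes q_C^*L)\big)$. The problem is thereby reduced to understanding the quotient sheaf $\mc B_{C^{(d)}}$ on $C\times C^{(d)}$.

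The key step is to identify $\mc B_{C^{(d)}}$. By the universal property of the Quot scheme, the section $\eta$ corresponds to the flat family of quotients used to define it in \eqref{section-HC}, so $(\mathrm{id}\times\eta)^*\mc B_{\mc Q}$ is canonically that family; unwinding the construction, it is the composite quotient $E\otimes\mc O_{C\times C^{(d)}}\to M\otimes\mc O_\Sigma$, whence $\mc B_{C^{(d)}}\cong q_C^*M\otimes\mc O_\Sigma$, where $\Sigma\subset C\times C^{(d)}$ is the universal divisor. This is the main obstacle: one must check carefully, from the definition of $\eta$, that the pulled-back universal quotient is \emph{exactly} $q_C^*M\otimes\mc O_\Sigma$ and not merely isomorphic to it up to a twist by a line bundle pulled back from $C^{(d)}$. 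The point is that $\eta$ is built from the fixed surjection $E\to M$ and the universal quotient $\mc O_{C\times C^{(d)}}\to\mc O_\Sigma$ with no extra twisting, and that the universal property pins the family down on the nose rather than only fibrewise.

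Granting this, I would substitute and use $\mc O_\Sigma\otimes q_C^*M\otimes q_C^*L\cong\mc O_\Sigma\otimes q_C^*(L\otimes M)$ to obtain $\eta^*B_{L,\mc Q}\cong\det\big(q_*(\mc O_\Sigma\otimes q_C^*(L\otimes M))\big)$. By Lemma~\ref{pullback-lemma} the sheaf inside the determinant is locally free of rank $d$, so this determinant is a well-defined line bundle on $C^{(d)}$, and by the definition of $\mc G_{d,\cdot}$ in \cite[page~8]{Pa} it is precisely $\mc G_{d,L\otimes M}$. This concluding identification is a matter of matching notation with Pacienza's construction, and it completes the proof.
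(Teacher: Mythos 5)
Your proposal is correct and follows essentially the same route as the paper's proof: pull back via Lemma~\ref{lb-lemma}, identify $(\mathrm{id}\times\eta)^*\mc B_{\mc Q}$ with $q_C^*M\otimes\mc O_\Sigma$ from the definition of $\eta$ (since $\mc Q$ is a fine moduli space and the family is a quotient of the fixed sheaf $p_C^*E$, there is indeed no twist ambiguity, as you note), and match the resulting determinant with Pacienza's $\mc G_{d,L\otimes M}$. If anything, your write-up is slightly more careful than the paper's, which displays the pulled-back quotient with $L$ where $M$ is meant.
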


\begin{proof}
	We have the diagram:
	\[
	\begin{tikzcd}
	C\times C^{(d)} \ar[r,"id_C\times \eta"] \ar[d] & C\times \mc Q \ar[d] \\
	C^{(d)} \ar[r,"\eta"]                     &         \mc Q
	\end{tikzcd}
	\]
	Recall that by definition of $\eta$, the pullback of 
	the universal quotient on $C\times \mc Q$
	to $C\times C^{(d)}$ via the section $(id_C\times \eta)$ is the quotient
	$$E\otimes \mc O_{C\times C^{(d)}} \to L\otimes \mc O_{C\times C^{(d)}} \to L\otimes \mc O_{\Sigma}$$
	Hence by Lemma \ref{lb-lemma}, we have 
	$$\eta^*B_{L,\mc Q}\cong \mc G_{d,L\otimes M}\,.$$
\end{proof}

\begin{proposition}\label{difference-B_L}
	For any two line bundles $L,L'$ over $C$
	$$B_{L,\mc Q}\otimes B^{-1}_{L',\mc Q}=\Phi^*((L\otimes L'^{-1})^{\boxtimes d})\,.$$
\end{proposition}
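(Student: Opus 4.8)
The plan is to exploit the decomposition $\mathrm{Pic}(\mc Q)=\Phi^*\mathrm{Pic}(C^{(d)})\oplus\Z[\mc O_{\mc Q}(1)]$ from Theorem \ref{thm1}. Both $B_{L,\mc Q}$ and $B_{L',\mc Q}$ split as $\Phi^*(-)\otimes\mc O_{\mc Q}(n)$, and I first show that their $\mc O_{\mc Q}(1)$-coefficients agree, so that the difference $B_{L,\mc Q}\otimes B_{L',\mc Q}^{-1}$ lies in the subgroup $\Phi^*\mathrm{Pic}(C^{(d)})$. I then pin down the corresponding class on $C^{(d)}$ by pulling back along the section $\eta$ of $\Phi$ and invoking Lemma \ref{restriction of O(1) to symmetric product}.

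For the first step I use Lemma \ref{generically the line bundle is O(n)}, according to which the integer $n$ attached to a line bundle on $\mc Q$ is read off from its restriction to a fibre $\Phi^{-1}(\underline c)$ with $\underline c=(c_1,\dots,c_d)\in U_C$ a tuple of distinct points. Over such a fibre, identified via $\tilde\psi$ with $\prod_i\mb P(E_{c_i})$, the universal sheaf $\mc B_{\mc Q}$ restricts to the universal quotients on the factors, and tensoring by $p_C^*L$ merely tensors the $i$-th factor by the one–dimensional space $L_{c_i}$. This does not alter the relative hyperplane class, so $B_{L,\mc Q}$ and $B_{L',\mc Q}$ (and indeed $\mc O_{\mc Q}(1)=B_{\mc O_C,\mc Q}$) restrict to the same line bundle on every such fibre. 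Hence their $\mc O_{\mc Q}(1)$-coefficients coincide and $B_{L,\mc Q}\otimes B_{L',\mc Q}^{-1}\in\Phi^*\mathrm{Pic}(C^{(d)})$.

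For the second step, since $\Phi\circ\eta=\mathrm{id}_{C^{(d)}}$ we have $\eta^*\Phi^*=\mathrm{id}$ on $\mathrm{Pic}(C^{(d)})$, so any class in $\Phi^*\mathrm{Pic}(C^{(d)})$ is recovered as $\Phi^*$ of its $\eta$-pullback. Applying $\eta^*$ to the difference and using Lemma \ref{restriction of O(1) to symmetric product} (with $\eta$ defined by a quotient $E\to M$) gives $\eta^*(B_{L,\mc Q}\otimes B_{L',\mc Q}^{-1})=\mc G_{d,L\otimes M}\otimes\mc G_{d,L'\otimes M}^{-1}$. The standard identity $\mc G_{d,A}\otimes\mc G_{d,B}^{-1}=(A\otimes B^{-1})^{\boxtimes d}$ for the line bundles $\mc G_{d,-}$ then yields $(L\otimes L'^{-1})^{\boxtimes d}$, the auxiliary bundle $M$ cancelling. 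Therefore $B_{L,\mc Q}\otimes B_{L',\mc Q}^{-1}=\Phi^*\big((L\otimes L'^{-1})^{\boxtimes d}\big)$.

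The main obstacle is the first step: verifying rigorously that the $\mc O_{\mc Q}(1)$-components cancel, i.e. that twisting by a line bundle pulled back from $C$ leaves the fibrewise relative $\mc O(1)$ untouched. This is precisely where one must use that over $U_C$ the fibres are honest products of projective spaces, so that the non-reduced/derived subtleties of $\mc B_{\mc Q}$ away from $U_C$ are irrelevant, Lemma \ref{generically the line bundle is O(n)} testing only on $U_C$. A secondary point is the identity for $\mc G_{d,-}$, which follows from its definition in \cite{Pa}. As an independent consistency check, Grothendieck--Riemann--Roch computes $c_1(B_{L,\mc Q})-c_1(B_{L',\mc Q})=\Phi^*[\,(L\otimes L'^{-1})^{\boxtimes d}\,]$ numerically, matching the asserted isomorphism.
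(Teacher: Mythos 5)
Your proposal is correct and takes essentially the same route as the paper: both arguments first show the $\mc O_{\mc Q}(1)$-coefficients of $B_{L,\mc Q}$ and $B_{L',\mc Q}$ agree by restricting to a fibre $\Phi^{-1}(\underline{c})\cong\prod_i\mb P(E_{c_i})$ over distinct points (the paper evaluates the degree on a line in this fibre and verifies $\mc B_Y\otimes p_{1,Y}^*L\cong\mc B_Y$ explicitly, which is exactly your observation that twisting by $p_C^*L$ only tensors each factor by the one-dimensional space $L_{c_i}$), and both then identify the resulting class in $\Phi^*{\rm Pic}(C^{(d)})$ by pulling back along the section $\eta$, using Lemma \ref{restriction of O(1) to symmetric product} and G\"ottsche's formula $\mc G_{d,N}=N^{\boxtimes d}\otimes\mc O(-\Delta_d/2)$, with the auxiliary quotient $M$ cancelling. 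There is no gap; your GRR check is extra but harmless.
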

\begin{proof}
	First we show that $B_{L,\mc Q}\otimes B^{-1}_{L',\mc Q}\in \Phi^*{\rm Pic}(C^{(d)})$.
	Since any line bundle over $\mc Q$ is of the form 
	$\mc O_{\mc Q}(a)\otimes \phi^*\mc L$, where $\mc L\in {\rm Pic}(C^{(d)})$, 
	it is enough to show that both $B_{L,\mc Q}$ and $B_{L',\mc Q}$ have the 
	same $\mc O_{\mc Q}(1)$-th coeffcient. 
	
	To compute the coefficient of this component of any line 
	bundle over $\mc Q$, we can do the following.
	Fix $d$ distinct points $c_1,\ldots,c_d\in C$.
	These define a point $\underline{c}\in C^{(d)}$. As we saw 
	in the proof of Theorem \ref{thm1},  
	$$\Phi^{-1}(\underline{c})\cong \prod_{i=1}^d \mb P(E_{c_i})\,.$$ 
	Let $v_i\in \mb P(E_{c_i})$ for $i\geq 2$. Then we have an embedding:
	$$f:\mb P(E_{c_1})\times v_2\times\ldots\times v_d\hookrightarrow \mb P(E_{c_1}) \times \prod\limits_{i\geq 2}\mb P(E_{c_i})=\Phi^{-1}(\underline{c})\,.$$
	Then the $\mc O_{\mc Q}(1)$-th coefficient of a line 
	bundle $\mc M$ over $\mc Q$ is the degree of $f^*\mc M$ 
	with respect to $\mc O_{\mb P(E_{c_1})}(1)$. 
	Let $Y=\mb P(E_{c_1})$. 
	Using Lemma \ref{lb-lemma}, $f^* B_{L,\mc Q}={\rm det}(p_{Y*}(\mc B_Y\otimes p^*_{1,Y}L))$. 
	
	The $v_j\in \mb P(E_{c_j})$ correspond to 
	quotients $v_j:E\to E_{c_j}\to k_{c_j}$, 
	for $2\leq j \leq d$. Over $C\times Y$ 
	we have the inclusions $i_j:Y\cong c_j\times Y \hookrightarrow C\times Y$
	for every $1\leq j\leq d$.
	We have a map
	$$p_{1,Y}^*E \to \bigoplus\limits_{j=1}^{d} i_{j*}(p_{1,Y}^*E\vert_{c_j\times Y})\,.$$
	The bundle $p_{1,Y}^*E\vert_{c_j\times Y}$ is just the trivial bundle 
	on $Y$, and using $v_j$ we can get quotients $p_{1,Y}^*E\vert_{c_j\times Y}\to \mc O_Y$
	for $2\leq j\leq d$. For $j=1$ we have the quotient $p_{1,Y}^*E\vert_{c_1\times Y}\to i_{1*}(\mc O_Y(1))$.
	Since the $c_j\times Y$ are disjoint we can put these together to 
	get a quotient on $C\times Y$
	$$p_{1,Y}^*E  \to \left( \bigoplus\limits_{j=2}^{d} i_{j*}\mc O_{Y}\right) 
	\bigoplus i_{1*}\mc O_Y(1)\,.$$
	By definition, the sheaf $\mc B_Y$ is the sheaf in the RHS. 
	Then 
	\begin{align*}
	\mc B_Y\otimes p^*_{1,Y}L= & \left( \bigoplus\limits_{j=2}^{d} i_{j*}\mc O_{Y}\right) \otimes p^*_{1,Y}L
	\,\,\,  \bigoplus \,\,\, i_{1*}\mc O_Y(1)\otimes p^*_{1,Y}L \\
	= & \left( \bigoplus\limits_{j=2}^{d} i_{j*}\mc O_{Y}\right) \bigoplus i_{1*}\mc O_Y(1)\\
	= & \mc B_Y\,\,.
	\end{align*}
	Thus, using the remark in the preceding para, we get that the 
	$\mc O_{\mc Q}(1)$-th coefficient of $B_{L,\mc Q}$ is 
	the same as that of $B_{L',\mc Q}$. Hence $B_{L,\mc Q}\otimes B^{-1}_{L',\mc Q}=\Phi^*\mc L$.
	
	Recall the section $\eta$ 
	of $\Phi$ from equation \eqref{section-HC}, constructed using 
	some line bundle quotient $E\to M$.
	Then $\eta^*(B_{L,\mc Q}\otimes B^{-1}_{L',\mc Q})=s^*\Phi^*\mc L=\mc L$. 
	Now using Lemma \ref{restriction of O(1) to symmetric product}, 
	we get that $\eta^*B_{L,\mc Q}=\mc G_{d,L\otimes M}$. 

	By G\"ottsche's theorem (\cite[page 9]{Pa}) we get that 
	$\eta^*B_{L,\mc Q}=\mc G_{d,L\otimes M}=(L\otimes M)^{\boxtimes d}\otimes \mc O(-\Delta_d/2)$. 
	Therefore, we get 
	$$\mc L=\eta^*(B_{L,\mc Q}\otimes B^{-1}_{L',\mc Q})=
	(L\otimes L'^{-1})^{\boxtimes d}\,.$$
	This completes the proof of the Proposition.
\end{proof}

\begin{corollary}\label{compare-B_L-O(1)}
	$[B_{L,\mc Q}]=[\mc O_{\mc Q}(1)]+{\rm deg}(L)[x]$ in $N^1(\mc Q)$.
\end{corollary}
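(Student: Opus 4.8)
The plan is to deduce this directly from Proposition \ref{difference-B_L}. First I would note that $\mc O_{\mc Q}(1)$ is itself the special case $B_{\mc O_C,\mc Q}$ of the construction $B_{F,\mc Q}$: taking $F=\mc O_C$ gives $p^*_C\mc O_C=\mc O_{C\times \mc Q}$, so $\mc B_{\mc Q}\otimes p^*_C\mc O_C=\mc B_{\mc Q}$ and hence $B_{\mc O_C,\mc Q}=\det(p_{\mc Q*}\mc B_{\mc Q})=\mc O_{\mc Q}(1)$ by Definition \ref{def-O(1)}.

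Next I would apply Proposition \ref{difference-B_L} with $L'=\mc O_C$, which gives
$$B_{L,\mc Q}\otimes B^{-1}_{\mc O_C,\mc Q}=\Phi^*\big((L\otimes \mc O_C^{-1})^{\boxtimes d}\big)=\Phi^*(L^{\boxtimes d})\,,$$
so that $B_{L,\mc Q}=\mc O_{\mc Q}(1)\otimes \Phi^*(L^{\boxtimes d})$ in ${\rm Pic}(\mc Q)$. Passing to numerical classes in $N^1(\mc Q)$ and using the convention $\Phi^*D=D$, the corollary will follow once I identify $[L^{\boxtimes d}]$ with ${\rm deg}(L)[x]$ in $N^1(C^{(d)})$.

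This last identification is the only step I expect to need real care. The operation $L\mapsto L^{\boxtimes d}$ is a group homomorphism ${\rm Pic}(C)\to {\rm Pic}(C^{(d)})$, since its characterizing pullback $\bigotimes_i p^*_iL$ along $\pi:C^d\to C^{(d)}$ is additive in $L$ and the $\boxtimes$-bundle is uniquely determined by that pullback; composing with ${\rm Pic}(C^{(d)})\to N^1(C^{(d)})$ gives a homomorphism to the Neron-Severi space. The point is that this homomorphism factors through ${\rm deg}$: if ${\rm deg}\,L=0$ then $L\in {\rm Pic}^0(C)=J(C)$, which is connected, so $L$ is algebraically equivalent to $\mc O_C$; hence $L^{\boxtimes d}$ is algebraically equivalent to $\mc O_C^{\boxtimes d}=\mc O_{C^{(d)}}$, and therefore numerically trivial. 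Since $[x]=[\mc O(x)^{\boxtimes d}]$ (as recalled in Section \ref{section-nef-C^{(d)}}) and any $L$ on $C$ is numerically ${\rm deg}(L)[\mc O(x)]$, additivity yields $[L^{\boxtimes d}]={\rm deg}(L)[x]$. Substituting this into $[B_{L,\mc Q}]=[\mc O_{\mc Q}(1)]+[L^{\boxtimes d}]$ gives the claimed formula, and the main obstacle, namely the well-definedness of $[L^{\boxtimes d}]$ on numerical classes, is handled precisely by the connectedness of $J(C)$.
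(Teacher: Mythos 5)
Your proposal is correct and is exactly the argument the paper intends: the corollary is stated without proof as an immediate consequence of Proposition \ref{difference-B_L} applied with $L'=\mc O_C$, together with $B_{\mc O_C,\mc Q}=\mc O_{\mc Q}(1)$ and the identity $[x]=[\mc O(x)^{\boxtimes d}]$ recalled from \cite[\S 2]{Pa}. Your extra care in checking that $L\mapsto [L^{\boxtimes d}]$ factors through ${\rm deg}$ (via connectedness of $J(C)$) is a sound justification of a step the paper leaves implicit.
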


\section{Upper bound on NEF cone}

Let $V$ be a vector space of dimension $n$. 
From now, unless mentioned otherwise, 
the notation $\mc Q$ will be reserved for the space 
$\mc Q(V\otimes \mc O_C,d)$. Sometimes we will also 
denote this space by $\mc Q(n,d)$ when we want to emphasize  
$n$ and $d$. \\
{\bf Notation}. For the rest of this article, except in section \ref{genus-0}, 
the genus of the curve $C$ will be $g(C)\geq 1$.
If $g(C)\geq 2$ then we will also assume that $C$ is very general.

Our aim is to compute the NEF cone of $\mc Q$.
Since this cone is dual to the cone of effective
curves, it follows that if we take effective curves
$C_1,C_2,\ldots,C_r$, take the cone generated by these 
in $N_1(\mc Q)$, and take the dual cone $T$ in $N^1(\mc Q)$,
then ${\rm Nef}(\mc Q)$ is contained in $T$. This gives us an upper 
bound on ${\rm Nef}(\mc Q)$.
We already know two curves in $\mc Q$. The first 
being a line in the fiber of $\Phi:\mc Q\to C^{(d)}$,
see Definition \ref{line-fiber-hc}, which was denoted $[l]$. 
Recall the section $\eta$ 
of $\Phi$ from equation \eqref{section-HC}, taking $L$ to be the trivial
bundle.
The second curve is $\eta_*([l'])$, where $[l']$ is from Definition 
\ref{p1-in-C^(d)}. Now we will construct a third curve in $\mc Q$.

Define a morphism 
\begin{equation}\label{t delta}
	\tilde{\delta}:C\to \mc Q
\end{equation}
as follows. Let $p_1,p_2:C\times C\to C$ be the first and second projections respectively. 
Let $i:C\to C\times C$ be the diagonal. 
Fix a surjection $k^n\to k^d$ of vector spaces. 
Then define the quotient over $C\times C$
$$ \mc O^n_{C\times C} \to  \mc O^d_{C\times C} \to i_*i^* \mc O^d_{C\times C}\,.$$ 
This induces a  morphism $\tilde{\delta}:C\to \mc Q$ 
which sends $c \mapsto [\mc O^n_C \to k^d_c\to 0]$.
We will abuse notation and denote the class  
$[\tilde \delta_*(C)]\in N_1(\mc Q)$ by $[\tilde \delta]$.

We now give an upper bound for the NEF cone when $n\geq d\geq {\rm gon}(C)$.

\begin{proposition} \label{nef cone of quot schemes } 
	Consider the Quot scheme $\mc Q=\mc Q(n,d)$.
	Assume $n\geq d \geq {\rm gon}(C)$.
	Let $\mu_0:=\dfrac{d+g-1}{dg}$. Then 
	$${\rm Nef}(\mc Q)\subset \mb R_{\geq 0}([\mc O_{\mc Q}(1)]+\mu_0[L_0])+\mb R_{\geq 0}[\theta_d]+\mb R_{\geq 0}[L_0]\,.$$
\end{proposition}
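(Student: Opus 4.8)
The plan is to exploit the duality between $\mathrm{Nef}(\mc Q)$ and the cone of effective curves, exactly as sketched in the paragraph preceding the statement. I have three effective curves on $\mc Q$ at my disposal: the fiber line $[l]$ of Definition \ref{line-fiber-hc}, the small-diagonal curve $[\tilde{\delta}]$ of \eqref{t delta} (which exists precisely because $n\geq d$, so that a surjection $k^n\to k^d$ is available), and the image $\eta_*[l']$ of the gonal curve, which exists because $d\geq \mathrm{gon}(C)$. Writing $K\subset \overline{NE}(\mc Q)$ for the cone these three span, one has $\mathrm{Nef}(\mc Q)=\overline{NE}(\mc Q)^{\vee}\subset K^{\vee}$, so it is enough to compute $K^{\vee}$ and match it with the right-hand side. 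Throughout I work in the basis $\{[\mc O_{\mc Q}(1)],[\theta_d],[L_0]\}$ of the three-dimensional space $N^1(\mc Q)$ furnished by Corollary \ref{neron-severi of quot scheme} and Proposition \ref{dim-N^1}.

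The core of the argument is to compute how each curve pairs with this basis. For $[l]$ this is immediate: it is contracted by $\Phi$, so $\Phi^*\beta\cdot[l]=0$ for every $\beta\in N^1(C^{(d)})$, while $[\mc O_{\mc Q}(1)]\cdot[l]=1$; thus $[l]$ gives the triple $(1,0,0)$. For $[\tilde{\delta}]$, note that $\Phi\circ\tilde{\delta}$ is the small diagonal $\delta\colon c\mapsto dc$, so by the projection formula the $\theta_d$- and $L_0$-components are $\deg(\delta^*\theta_d)=d^2g$ and $\deg(\delta^*L_0)=0$; for the remaining component I pull back the universal quotient, which along $\tilde{\delta}$ is $i_*\mc O_C^d$ on $C\times C$, so Lemma \ref{lb-lemma} gives $\tilde{\delta}^*\mc O_{\mc Q}(1)=\det\big(p_{2*}(i_*\mc O_C^d)\big)=\det(\mc O_C^d)=\mc O_C$, whence $[\mc O_{\mc Q}(1)]\cdot[\tilde{\delta}]=0$ and the triple is $(0,d^2g,0)$. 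For $\eta_*[l']$, since $\eta$ is a section, the projection formula reduces everything to $C^{(d)}$: Lemma \ref{restriction of O(1) to symmetric product} applied to the trivial quotient together with G\"ottsche's formula yields $\eta^*\mc O_{\mc Q}(1)=\mc O(-\Delta_d/2)$, and combining $\theta_d\cdot[l']=0$ (the gonal $\mb P^1$ maps to a point of $J(C)$), $[x]\cdot[l']=1$ (a generic point lies on a single fiber of the pencil), and $[\Delta_d/2]=(d+g-1)[x]-[\theta_d]$ from Lemma \ref{E in terms of various bases} gives the triple $(-(d+g-1),0,dg)$.

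It remains to dualize. A class $a[\mc O_{\mc Q}(1)]+b[\theta_d]+c[L_0]$ lies in $K^{\vee}$ exactly when $a\geq 0$, $b\geq 0$, and $c\,dg\geq a(d+g-1)$, i.e. $c\geq a\mu_0$ with $\mu_0=(d+g-1)/dg$. Rewriting $a[\mc O_{\mc Q}(1)]+b[\theta_d]+c[L_0]=a\big([\mc O_{\mc Q}(1)]+\mu_0[L_0]\big)+b[\theta_d]+(c-a\mu_0)[L_0]$ exhibits $K^{\vee}$ as $\mb R_{\geq 0}([\mc O_{\mc Q}(1)]+\mu_0[L_0])+\mb R_{\geq 0}[\theta_d]+\mb R_{\geq 0}[L_0]$, the claimed cone. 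I expect the only genuinely geometric steps---rather than bookkeeping---to be the two computations $[\mc O_{\mc Q}(1)]\cdot[\tilde{\delta}]=0$ and $\eta^*\mc O_{\mc Q}(1)=\mc O(-\Delta_d/2)$; once these are in hand the remainder is the linear algebra of inverting the $3\times 3$ pairing matrix, whose determinant $d^3g^2\neq 0$ confirms that $K$ is full-dimensional and hence that $K^{\vee}$ is exactly the simplicial cone described.
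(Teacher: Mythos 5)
Your proposal is correct and is essentially the paper's own argument: the paper bounds ${\rm Nef}(\mc Q)$ by the dual of the cone spanned by exactly the same three effective curves $[l]$, $\eta_*[l']$ and $[\tilde\delta]$, using the same key facts you isolate, namely $[\mc O_{\mc Q}(1)]\cdot[\tilde\delta]=0$ (via Lemma \ref{pullback-lemma}) and $\eta^*\mc O_{\mc Q}(1)=\mc O(-\Delta_d/2)$ (via Lemma \ref{restriction of O(1) to symmetric product} and G\"ottsche's formula), together with Lemma \ref{E in terms of various bases}. The only cosmetic difference is that you compute and invert the full $3\times 3$ pairing matrix, whereas the paper verifies directly that the three proposed generators annihilate the appropriate curves and then invokes Proposition \ref{nef-cone-C^(d)}.
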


\begin{proof}
	We claim that the cone dual to $\langle [l],\eta_*([l']),[\tilde \delta]\rangle$
	is precisely 
	$$\langle ([\mc O_{\mc Q}(1)]+\mu_0[L_0]),[L_0],[\theta_d]\rangle\,.$$
	We have the following equalities:
	\begin{enumerate}
		\item $([\mc O_{\mc Q}(1)]+\mu_0[L_0])\cdot [l]=1$. This is clear.
		\item $([\mc O_{\mc Q}(1)]+\mu_0[L_0])\cdot \eta_*[l']=0$. By 
		projection formula and Lemma \ref{restriction of O(1) to symmetric product}, 
		we get that 
		$$([\mc O_{\mc Q}(1)]+\mu_0[L_0])\cdot [\eta_*l']=([-\Delta_d/2]+\mu_0[L_0])\cdot [l']\,.$$
		By Lemma \ref{E in terms of various bases} we get that 
		$[-\Delta_d/2]+\mu_0[L_0]=(1-\mu_0)[\theta_d]$. But as we saw earlier, 
		$[\theta_d]\cdot [l']=0$.
		\item $([\mc O_{\mc Q}(1)]+\mu_0[L_0])\cdot [\tilde{\delta}]=0$. By 
		Lemma \ref{pullback-lemma}, it is easy to see that 
		$[\mc O_{\mc Q}(1)]\cdot [\tilde{\delta}]=0$. By 
		projection formula, we get 
		$$([\mc O_{\mc Q}(1)]+\mu_0[L_0])\cdot [\tilde{\delta}]=
		[\mu_0L_0]\cdot [\Phi_*\tilde{\delta}]=[\mu_0L_0]\cdot [\delta]=0\,.$$
		\item $[\theta_d]\cdot [l]=[L_0]\cdot [l]=0$ follows using the projection formula. 	
	\end{enumerate}
	Now the claim follows from Proposition \ref{nef-cone-C^(d)}.
	As explained before, since ${\rm Nef}(\mc Q)$ is contained in the 
	dual to the cone $\langle [l],\eta_*([l']),[\tilde \delta]\rangle$,
	the proposition follows.
\end{proof}

When the genus $g=1$, we have the following improvement of 
Proposition \ref{nef cone of quot schemes }. 

\begin{proposition} \label{ub-g=1} 
	Let $C$ be a smooth projective curve of genus $g=1$.
	Consider the Quot scheme $\mc Q=\mc Q(n,d)$.
	Assume $d \geq {\rm gon}(C)=2$. Then 
	$${\rm Nef}(\mc Q)\subset \mb R_{\geq 0}([\mc O_{\mc Q}(1)]+[L_0])+\mb R_{\geq 0}[\theta_d]+\mb R_{\geq 0}[L_0]\,.$$
\end{proposition}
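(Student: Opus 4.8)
The plan is to mimic the proof of Proposition \ref{nef cone of quot schemes } but to exploit the fact that when $g=1$ the coefficient $\mu_0 = \dfrac{d+g-1}{dg}$ simplifies. Indeed, substituting $g=1$ gives $\mu_0 = \dfrac{d+1-1}{d\cdot 1} = \dfrac{d}{d} = 1$. Thus the cone appearing in Proposition \ref{nef cone of quot schemes } already reads $\mb R_{\geq 0}([\mc O_{\mc Q}(1)]+[L_0])+\mb R_{\geq 0}[\theta_d]+\mb R_{\geq 0}[L_0]$, which is exactly the asserted upper bound. So the first thing I would do is verify that the hypotheses of Proposition \ref{nef cone of quot schemes } are met: we need $n\geq d\geq {\rm gon}(C)$. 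For an elliptic curve ${\rm gon}(C)=2$, so the hypothesis $d\geq {\rm gon}(C)=2$ is stated, but we must also supply $n\geq d$, which is \emph{not} assumed here. This is the main obstacle: the present statement drops the assumption $n\geq d$, so I cannot simply invoke the earlier proposition verbatim.

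To remove the $n\geq d$ hypothesis I would re-examine where it was used. In the proof of Proposition \ref{nef cone of quot schemes } the three curves $[l]$, $\eta_*[l']$ and $[\tilde\delta]$ are produced, and the dual-cone computation (items (1)--(4)) is carried out. The construction of $[l]$ (Definition \ref{line-fiber-hc}) needs only a line inside some fiber $\mb P(E_{c_1})$, which exists as soon as the fibers have dimension $\geq 1$, i.e. $n\geq 2$; since $d\geq 2$ and we always take $n\geq 1$, I would check that $n\geq 2$ holds in the relevant range (if $n=1$ the Quot scheme degenerates and the statement should be handled or excluded separately). The curve $\eta_*[l']$ needs a surjection $E=V\otimes\mc O_C\to L$ with $L$ trivial, which always exists, and needs $d\geq{\rm gon}(C)$ so that $[l']$ exists (Definition \ref{p1-in-C^(d)}); this is supplied. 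The curve $[\tilde\delta]$ (equation \eqref{t delta}) requires a surjection $k^n\to k^d$ of vector spaces, and \emph{this} is the only place where $n\geq d$ enters. Hence the key step is to find a replacement curve playing the role of $[\tilde\delta]$ when $n<d$, or to argue that the intersection numbers in item (3) can be realized by a different curve whose class still maps to $[\delta]$ under $\Phi_*$ and pairs to zero with $\mc O_{\mc Q}(1)$.

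The heart of the argument will therefore be constructing, for $g=1$ and $d\geq 2$ without assuming $n\geq d$, a curve $\gamma\subset\mc Q$ with $[\mc O_{\mc Q}(1)]\cdot\gamma=0$ and $\Phi_*\gamma=[\delta]$ (the small diagonal class), so that items (1)--(4) go through unchanged and the dual of $\langle[l],\eta_*[l'],\gamma\rangle$ is again $\langle[\mc O_{\mc Q}(1)]+[L_0],[L_0],[\theta_d]\rangle$ by Proposition \ref{nef-cone-C^(d)}. A natural candidate is a torsion quotient supported on a moving point $c\in C$ but of length $d$ concentrated at $c$, i.e. using the quotient $\mc O^n_{C\times C}\to i_*(\mc O^d_{c,\ell})$ where $i_*$ now records a length-$d$ thickening along the diagonal rather than a reduced point; concretely one takes a surjection $\mc O^n_C \to \mc O_C/\mf m_c^d$-type quotient realized in families, which requires only $n\geq 1$ to produce a cyclic length-$d$ module. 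I would verify that such a family sweeps out a curve whose image under $\Phi$ is the small diagonal $\delta:C\hookrightarrow C^{(d)}$, $c\mapsto dc$, and that $[\mc O_{\mc Q}(1)]$ restricts trivially on it by the same pushforward/Lemma \ref{pullback-lemma} computation as in item (3). Granting this, the proof concludes identically: the cone dual to the three effective curves is the claimed upper bound, and since ${\rm Nef}(\mc Q)$ is contained in that dual cone, the Proposition follows. The part I expect to require genuine care is checking that the length-$d$ cyclic quotient varies in an $\mc O_{\mc Q}(1)$-degree-zero family even when $n<d$, since the universal pushforward $p_{\mc Q*}(\mc B_{\mc Q})$ is rank $d$ regardless of $n$; the determinant-degree computation must be redone to confirm it still vanishes on $\gamma$.
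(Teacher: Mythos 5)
Your route is in fact the paper's route: the replacement curve you are looking for is exactly $\eta_*[\delta]$, the pushforward of the small diagonal $\delta:C\hookrightarrow C^{(d)}$ under the section $\eta$ of \eqref{section-HC} taken with $L=\mc O_C$, i.e.\ the family $c\mapsto [\mc O^n_C\to \mc O_C\to \mc O_C/\mf m^d_{C,c}]$ of cyclic length-$d$ quotients at a moving point --- precisely the candidate you describe. The paper's proof of Proposition \ref{ub-g=1} substitutes this class for $[\tilde\delta]$ in the triple $\langle [l],\eta_*[l'],\eta_*[\delta]\rangle$ and reruns the dual-cone computation of Proposition \ref{nef cone of quot schemes } verbatim, and you correctly isolated the only place where $n\geq d$ was used (the surjection $k^n\to k^d$ defining $\tilde\delta$), which this substitution removes.

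The gap is that you defer exactly the step that carries the proposition, the vanishing $[\mc O_{\mc Q}(1)]\cdot \eta_*[\delta]=0$, and your diagnosis of where the difficulty sits is off. The computation is entirely independent of $n$: along this family the universal quotient on $C\times C$ factors through $\mc O_{C\times C}$, so by Lemma \ref{pullback-lemma} one has $[\mc O_{\mc Q}(1)]\cdot \eta_*[\delta]={\rm deg}\, p_{2*}(\mc O_{C\times C}/\mc I^d)$, where $\mc I$ is the ideal sheaf of the diagonal --- no trace of $n$ remains. What the vanishing genuinely requires is $g=1$: the filtration by powers of $\mc I$ has graded pieces $\mc I^j/\mc I^{j+1}\cong (\mc I/\mc I^2)^{\otimes j}$, and by adjunction $\mc I/\mc I^2$ restricted to the diagonal is $\omega_C$, so ${\rm deg}\, p_{2*}(\mc O_{C\times C}/\mc I^d)=(2g-2)\binom{d}{2}$. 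For an elliptic curve $\omega_C$ is trivial and this is $0$; for $g\geq 2$ the same curve pairs \emph{strictly positively} with $\mc O_{\mc Q}(1)$, which is why this sharper upper bound is special to $g=1$ and why "granting this" cannot be waved through. (Your side worry about $n=1$ is harmless: there $\Phi$ is an isomorphism and the claim reduces to Proposition \ref{nef-cone-C^(d)}.) With this short computation inserted, your proposal becomes the paper's proof.
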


\begin{proof}
	We claim that the cone dual to $\langle [l],\eta_*([l']),\eta_*[\delta]\rangle$
	is precisely 
	$$\langle ([\mc O_{\mc Q}(1)]+[L_0]),[L_0],[\theta_d]\rangle\,.$$
	Let us check that $[([\mc O_{\mc Q}(1)]+[L_0])]\cdot \eta_*[\delta]=0$.
	Since $[L_0]\cdot [\delta]=0$ it is clear that it suffices to check that 
	$[\mc O_{\mc Q}(1)]\cdot \eta_*[\delta]=0$. Applying the definition of 
	the map $\eta\circ\delta:C\to \mc Q$
	we see that $[\mc O_{\mc Q}(1)]\cdot \eta_*[\delta]={\rm deg}(p_{2*}(\mc O/\mc I^d))$,
	where $\mc I$ is the ideal sheaf of the diagonal in $E\times E$.
	Since $\mc I/\mc I^2$ is trivial and $\mc I^j/\mc I^{j+1}=(\mc I/\mc I^2)^{\otimes j}$,
	it follows that ${\rm deg}(p_{2*}(\mc O/\mc I^d))=0$. 
	The rest of the proof is the same as that of Proposition \ref{nef cone of quot schemes }.
\end{proof}

\section{Lower bound on NEF cone}

In this section we obtain a lower bound for ${\rm Nef}(\mc Q)$ ($\mc Q=\mc Q(n,d)$).

\begin{lemma}\label{B_L.D geq 0}
    Let $f:D\to \mc Q$ be a morphism, where $D$ is a smooth projective curve. Fix a point 
    $q\in f(D)$ and an effective divisor $A$ on $C$ containing the scheme theoretic support of     
    $\mc B_q$. If there is a line bundle $L$ on $C$ such that $H^0(L)\to H^0(L|_A)$ is surjective
    then $[B_{L,\mc Q}]\cdot [D]
    \geq 0$. 
\end{lemma}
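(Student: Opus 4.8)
The plan is to reduce the intersection number to the degree of a line bundle on $D$ and then exhibit an explicit nonzero global section of that line bundle, forcing the degree to be nonnegative. Write $\mc B_D := (id\times f)^*\mc B_{\mc Q}$ and let $p_D\colon C\times D\to D$, $p_{1,D}\colon C\times D\to C$ be the projections. By the projection formula, $[B_{L,\mc Q}]\cdot [D] = \deg_D(f^*B_{L,\mc Q})$ (where $[D]$ denotes $f_*[D]$), and by Lemma \ref{lb-lemma}(ii) we have $f^*B_{L,\mc Q} = B_{L,D} = \det\big(p_{D*}(\mc B_D\otimes p_{1,D}^*L)\big)$. Set $W := p_{D*}(\mc B_D\otimes p_{1,D}^*L)$, which by Lemma \ref{pullback-lemma} is locally free of rank $d$ with fibre $W|_t \cong H^0(C,\mc B_{f(t)}\otimes L)$. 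Since a line bundle on a smooth projective curve admitting a nonzero global section has nonnegative degree, it suffices to produce a nonzero section of $\det W$.

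To this end I would push forward the universal surjection. Tensoring $p_{1,D}^*E\twoheadrightarrow \mc B_D$ with $p_{1,D}^*L$ and applying $p_{D*}$ yields a map of sheaves on $D$
$$\alpha\colon H^0(C,E\otimes L)\otimes\mc O_D \longrightarrow W,$$
where I have used $p_{D*}p_{1,D}^*(E\otimes L)=H^0(C,E\otimes L)\otimes\mc O_D$ by flat base change. Let $t_0\in D$ satisfy $f(t_0)=q$. By Lemma \ref{pullback-lemma}(i) the fibre of $\alpha$ at $t_0$ is the natural map $H^0(C,E\otimes L)\to H^0(C,\mc B_q\otimes L)$ induced by the quotient $E\otimes L\twoheadrightarrow \mc B_q\otimes L$.

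The heart of the argument, and the only place the hypothesis enters, is to verify that this fibre map is surjective; here I would use that $E=\mc O_C^n$. Because $A$ contains the scheme-theoretic support of $\mc B_q$, the quotient $\mc O_C^n\twoheadrightarrow \mc B_q$ factors through $\mc O_A^{\oplus n}$, and tensoring with $L$ gives a surjection $(L|_A)^{\oplus n}\twoheadrightarrow \mc B_q\otimes L$ of sheaves supported on the finite scheme $A$. Since $A$ is zero-dimensional, $H^0(A,-)$ is exact, so $H^0(L|_A)^{\oplus n}\to H^0(\mc B_q\otimes L)$ is surjective; composing with the surjection $H^0(L)^{\oplus n}\to H^0(L|_A)^{\oplus n}$ provided by the hypothesis shows that $H^0(C,E\otimes L)\to H^0(C,\mc B_q\otimes L)$ is surjective, as required.

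Finally, surjectivity of $\alpha$ at $t_0$ lets me pick $d$ elements $s_1,\dots,s_d\in H^0(C,E\otimes L)$ whose images form a basis of $W|_{t_0}$. These define a map $\mc O_D^{\oplus d}\to W$ that is an isomorphism at $t_0$, so its determinant is a global section of $\det W=B_{L,D}$ nonzero at $t_0$, hence a nonzero section. Therefore $\deg_D B_{L,D}\geq 0$, i.e. $[B_{L,\mc Q}]\cdot[D]\geq 0$. I expect the main (and essentially the only nontrivial) obstacle to be the surjectivity of the fibre map of $\alpha$ at $t_0$; the base-change identifications and the passage from a nonzero section to nonnegative degree are routine given Lemmas \ref{pullback-lemma} and \ref{lb-lemma}.
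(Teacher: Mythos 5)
Your proposal is correct and takes essentially the same approach as the paper: the decisive step in both is the surjectivity of $H^0(V\otimes L)\to H^0(\mc B_q\otimes L)$, proved by factoring the quotient through $A$ and using exactness of global sections on a zero-dimensional scheme, followed by a determinant/exterior-power argument producing a section of the determinant line bundle nonvanishing over $q$. The only cosmetic difference is that the paper constructs the section of $\bigwedge^d\bigl(V\otimes H^0(L)\bigr)\to B_{L,\mc Q}$ on $\mc Q$ itself and then restricts to $D$, whereas you pull everything back to $D$ first via Lemma \ref{lb-lemma}; this changes nothing of substance.
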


\begin{proof}
    Consider the map 
	$$p_{\mc Q*}(p_C^*(V\otimes \mc O_C)\otimes p_C^*L)\to p_{\mc Q*}(\mc B_{\mc Q}\otimes p_C^*L)$$
	on $\mc Q$. We claim that this map is surjective at the point $q$. 
	In view of Lemma \ref{pullback-lemma} when 
	we restrict this map to $q$, it becomes equal to the map 
	$$H^0(V\otimes L)\to H^0(\mc B_q\otimes L)\,.$$
	The map $V\otimes L\to \mc B_q\otimes L$ on $C$ factors as 
	$$V\otimes L\to V\otimes L\vert_A \to \mc B_q\otimes L\,.$$
	Taking global sections we see that the map $H^0(V\otimes L)\to H^0(\mc B_q\otimes L)$
	factors as 
	$$H^0(V \otimes L)\to H^0(V\otimes L\vert_A) \to H^0(\mc B_q\otimes L)\,.$$
	The second arrow is surjective since these are coherent sheaves on a zero 
	dimensional scheme. The first arrow is simply  
	$$V\otimes H^0(L)\to V\otimes H^0(L\vert_A)\,.$$
	Since $H^0(L)\to H^0(L\vert_A)$ is surjective by our choice 
	of $L$, it follows that  $H^0(V\otimes L)\to H^0(\mc B_q\otimes L)$ is surjective,
	and so it follows that 
	$p_{\mc Q*}(V\otimes p_C^*L)\to p_{\mc Q*}(\mc B_{\mc Q}\otimes p_C^*L)$
	is surjective at the point $q$.
	
	The rank of the vector bundle $p_{\mc Q*}(\mc B_{\mc Q}\otimes p_{C}^*L)$ on $\mc Q$ is $d$. Taking the $d$th 
	exterior of $p_{\mc Q*}(V\otimes p_C^*L)\to p_{\mc Q*}(\mc B_{\mc Q}\otimes p_C^*L)$ 
	we get a map 
	$$\bigwedge^d(V\otimes H^0(L))\to B_{L,\mc Q}\,.$$
	This map is nonzero and that can be seen by looking at the restriction to the 
	point $q$. 
	This shows that there is a global section of $B_{L,\mc Q}$ whose restriction to $q$ does 
	not vanish. It follows that $[B_{L,\mc Q}]\cdot [D]\geq 0$.	This completes
	the proof of the lemma.
\end{proof}

\begin{lemma}\label{line bundles restricted to a divisor}
	Let $A$ be an effective divisor on $C$ of degree $d$.
	Then  there is a line bundle
	$L$ of degree $d+g-1$ such that the natural map 
	$$H^0(L)\to H^0(L\vert_A)$$
	is surjective.
\end{lemma}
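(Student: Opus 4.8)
The plan is to recognize that the surjectivity of $H^0(L)\to H^0(L|_A)$ is controlled by a single cohomology group, and then to choose $L$ so that this group vanishes. First I would tensor the structure sequence of the finite subscheme $A\subset C$ by $L$ to obtain
$$0\to L(-A)\to L\to L|_A\to 0.$$
Since $A$ is zero-dimensional, $L|_A$ is a skyscraper sheaf and $H^1(L|_A)=0$. The associated long exact sequence
$$H^0(L)\to H^0(L|_A)\to H^1(L(-A))\to H^1(L)\to 0$$
then shows that the restriction map $H^0(L)\to H^0(L|_A)$ is surjective as soon as $H^1(L(-A))=0$. So the whole problem reduces to producing an $L$ of degree $d+g-1$ with $H^1(L(-A))=0$.

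Next I would do the degree bookkeeping. Writing $M:=L(-A)$, the constraint $\deg L=d+g-1$ together with $\deg A=d$ gives $\deg M=(d+g-1)-d=g-1$. For a line bundle $M$ of degree exactly $g-1$, Riemann--Roch yields $h^0(M)-h^1(M)=\deg M+1-g=0$, so $h^1(M)=h^0(M)$. Hence $H^1(M)=0$ holds precisely when $M$ has no nonzero global sections. The key step is therefore to exhibit a line bundle $M$ of degree $g-1$ with $H^0(M)=0$, and then simply set $L:=M(A)$, which has the prescribed degree $d+g-1$ and satisfies $L(-A)=M$, so that $H^1(L(-A))=0$ as needed.

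Existence of such an $M$ is where the only genuine content lies, but it is standard: the locus of line bundles in ${\rm Pic}^{g-1}(C)$ admitting a nonzero section is the theta divisor $\Theta\subset {\rm Pic}^{g-1}(C)$, a proper closed subvariety, so a general $M\in{\rm Pic}^{g-1}(C)$ has $h^0(M)=0$ (for $g=1$ one may take any nontrivial degree-$0$ line bundle). Assembling these observations proves the lemma. I expect the argument to be essentially formal once this non-emptiness of the complement of $\Theta$ is invoked; the main point to be careful about is keeping the degree count exact so that Riemann--Roch forces $h^0=h^1$, letting the vanishing of $H^0$ (a generic, hence achievable, condition) deliver the vanishing of $H^1$ that drives the surjectivity.
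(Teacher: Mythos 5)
Your proof is correct and follows essentially the same route as the paper: both reduce surjectivity to the vanishing $H^1(L(-A))=0$ for a degree-$(g-1)$ twist and conclude by choosing $L$ generically so that the relevant degree-$(g-1)$ line bundle has no sections. The only cosmetic difference is that the paper passes through Serre duality (requiring $H^0(L^\vee\otimes K_C\otimes \mathcal{O}_C(A))=0$) where you use Riemann--Roch to get $h^0(L(-A))=h^1(L(-A))$ and avoid the theta divisor directly; these are interchangeable.
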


\begin{proof}
	It suffices to find 
	a line bundle of degree $d+g-1$ such that 
	$H^1(L\otimes \mc O_C(-A))=0$. By Serre duality this is 
	same as saying that $H^0(L^\vee\otimes K_C\otimes \mc O_C(A))=0$. The 
	degree of $L^\vee\otimes K_C\otimes \mc O_C(A)$ is $g-1$. Thus, fixing $A$
	we may choose a general $L$ such that $L^\vee\otimes K_C\otimes \mc O_C(A)$ 
	line bundle has no global sections. 
\end{proof}

\begin{definition}\label{def-U}
Define $U\subset \mc Q$ to be the set of 
quotients of the form 
$$	\mc O^n_{C}\to \dfrac{\mc O_{C}}{\prod_{i=1}^r\mf m^{d_i}_{C,c_i}} \cong 
		\bigoplus \dfrac{\mc O_{C,c_i}}{\mf m^{d_i}_{C,c_i}}\qquad\qquad c_i \neq c_j\,. $$
\end{definition}

We now prove a lemma, 
which is implicitly contained \cite[Section 5]{GS}.
Let $\Sigma\subset C\times C^{(d)}$ denote the closed sub-scheme which 
is the universal divisor. 
In the following Lemma we work more generally with $\mc Q(E,d)$.
\begin{lemma}
	Let $E$ be a locally free sheaf of rank $r$ on $C$. 
	Let $\mc Q=\mc Q(E,d)$ denote the Quot scheme of torsion 
	quotients of length $d$.
	The universal quotient 
	$\mc B_{\mc Q}$ is supported on $\Phi^*\Sigma\subset C\times \mc Q$. 
	The set $U$ is open in $\mc Q$. 
	On $C\times U$ the sheaf $\mc B_{\mc Q}$ is a line bundle supported 
	on the scheme $\Phi^*\Sigma\cap (C\times U)$.
\end{lemma}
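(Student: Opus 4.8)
The statement bundles three assertions: that $\mc B_{\mc Q}$ is annihilated by the ideal of $\Phi^*\Sigma$, that $U$ is open, and that $\mc B_{\mc Q}|_{C\times U}$ is invertible on $\Phi^*\Sigma\cap(C\times U)$. The plan is to treat all three through the zeroth and first Fitting ideals of the universal quotient together with the fibrewise criterion for local freeness.

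First I would recall that $\Phi$ is the morphism classifying the relative effective Cartier divisor cut out by the zeroth Fitting ideal of $\mc B_{\mc Q}$. Since $\mc B_{\mc Q}$ is $\mc Q$-flat with fibres of length $d$, and since $\mathrm{Fitt}_0$ commutes with base change while $V(\mathrm{Fitt}_0(B_q))$ equals the divisor $\sum_c l(B_{q,c})\,c$ of a torsion sheaf on a smooth curve, the subscheme $V(\mathrm{Fitt}_0(\mc B_{\mc Q}))\subset C\times\mc Q$ is finite flat of degree $d$ over $\mc Q$ and has the same fibres as $\Phi^*\Sigma$; as $\mc Q$ is reduced and both are $\mc Q$-flat, they coincide, so $\Phi^*\Sigma=V(\mathrm{Fitt}_0(\mc B_{\mc Q}))$ (this is the defining property of $\Phi$; see the construction in \cite{GS}). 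The support statement is then immediate from the containment $\mathrm{Fitt}_0(\mc B_{\mc Q})\subseteq\mathrm{Ann}(\mc B_{\mc Q})$, which gives $\mc I_{\Phi^*\Sigma}\cdot\mc B_{\mc Q}=0$, i.e.\ $\mc B_{\mc Q}$ is an $\mc O_{\Phi^*\Sigma}$-module.

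For openness of $U$, note that $q\in U$ precisely when the fibre $B_q$ is locally cyclic, i.e.\ when $\dim_{k(c,q)}(\mc B_{\mc Q}\otimes k(c,q))\le 1$ at every point $(c,q)$ lying over $q$. By upper semicontinuity of fibre dimension the locus $W=\{(c,q):\dim_{k(c,q)}(\mc B_{\mc Q}\otimes k(c,q))\ge 2\}$ is closed in $C\times\mc Q$; it is contained in $\mathrm{Supp}(\mc B_{\mc Q})\subseteq\Phi^*\Sigma$, which is finite, hence proper, over $\mc Q$. Therefore $\mc Q\setminus U=p_{\mc Q}(W)$ is closed and $U$ is open. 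Equivalently, $\mc Q\setminus U$ is the image of the closed set $V(\mathrm{Fitt}_1(\mc B_{\mc Q}))$, closed for the same reason.

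Finally, set $Z_U=\Phi^*\Sigma\cap(C\times U)$, which is finite flat over $U$, and $\mc B:=\mc B_{\mc Q}|_{C\times U}$, a coherent $\mc O_{Z_U}$-module by the first step. For $q\in U$ the fibre $B_q\cong\bigoplus_i\mc O_{C,c_i}/\mf m^{d_i}_{C,c_i}\cong\mc O_{Z_q}$ is free of rank one over $\mc O_{Z_q}=\mc O_{Z_U}\otimes_{\mc O_U}k(q)$. At a point $z=(c_i,q)\in Z_U$ I would lift a generator of $\mc B_z\otimes k(q)$ to $s\in\mc B_z$; since the structure map $\mc O_{U,q}\to\mc O_{Z_U,z}$ is local, Nakayama yields a surjection $\mc O_{Z_U,z}\to\mc B_z$ sending $1\mapsto s$. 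Its kernel $K$ satisfies $K\otimes_{\mc O_{U,q}}k(q)=0$, using $\mathrm{Tor}_1^{\mc O_{U,q}}(\mc B_z,k(q))=0$ from $\mc Q$-flatness of $\mc B_{\mc Q}$ and the fact that the fibre map is already an isomorphism of free rank-one modules; Nakayama once more forces $K=0$. Thus $\mc B_z\cong\mc O_{Z_U,z}$, so $\mc B$ is invertible on $Z_U$, and its support is exactly $Z_U$ since the stalks are nonzero. I expect this last paragraph to be the main obstacle: it is precisely the fibrewise criterion for local freeness, and one must keep flatness over $U$ separate from the module structure over the non-reduced scheme $\mc O_{Z_U}$, while also using that set-theoretic support and $\Phi^*\Sigma$ coincide only after restricting to the locally cyclic locus $U$.
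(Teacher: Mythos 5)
Your proof is correct and takes essentially the same route as the paper's: the paper also realizes $\Phi^*\Sigma$ as the vanishing of $\det(h)$ for $0\to A\xrightarrow{h} p_C^*E\to\mc B_{\mc Q}\to 0$ (your $\mathrm{Fitt}_0$) and gets the support statement from $\det(h)\cdot p_C^*E\subset h(A)$, i.e.\ exactly your containment $\mathrm{Fitt}_0\subseteq\mathrm{Ann}$, and it proves openness of $U$ via the same closed locus $\{z:\dim_{k(z)}\mc B_{\mc Q}\otimes k(z)\geq 2\}$ together with finiteness of $\Phi^*\Sigma\to\mc Q$. Your final Nakayama/Tor argument is precisely the proof of the local-algebra remark the paper states without proof ($R\to S$ finite, $M$ a finite $S$-module, $R$-flat, $M/\mf m M\cong S/\mf m S$ implies $M\cong S$), so the two proofs agree in all essentials.
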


\begin{proof}
	Let $A$ denote the kernel of the universal quotient on 
	$C\times \mc Q$
	$$0\to A\xrightarrow{h} p_C^*E\to \mc B_{\mc Q}\to 0\,.$$
	The map $\Phi$ is defined taking the determinant of 
	$h$, that is, using the quotient 
	$$0\to {\rm det}(A)\xrightarrow{{\rm det}(h)} p_C^*{\rm det}(E) \to \mc F\to 0\,.$$
	If $\mc I_\Sigma$ denotes the ideal sheaf of $\Sigma$
	then this shows that 
	$$\Phi^*\mc I_{\Sigma}={\rm det}(A)\otimes p_C^*{\rm det}(E)^{-1}\,.$$
	Let $0\to E'\xrightarrow{h} E$ be locally free sheaves of the same rank on 
	a scheme $Y$. Let $\mc I$ denote the ideal sheaf determined by 
	${\rm det}(h)$. Then it is easy to see that $\mc IE\subset h(E')\subset E$.
	Applying this we get that $(\Phi^*\mc I_\Sigma) p_C^*E \subset A$.
	This proves that $\mc B$ is supported on $\Phi^*\Sigma$.
	Let us denote by $Z:=\Phi^*\Sigma\subset C\times \mc Q$.
	Consider the closed subset $Z_2\subset Z$ defined as follows
	$$Z_2:=\{z=(c,q)\in Z\,\,\vert\,\, {\rm rank}_k(\mc B_{\mc Q}\otimes k(z))\geq 2 \}\,.$$
	Then the image of $Z_2$ in $\mc Q$ is closed and $U$ is precisely
	the complement of $Z_2$. This proves that $U$ is open in $\mc Q$.

	Let $R$ be a local ring with maximal ideal $\mf m$ and let $R\to S$ be a finite map.
	Let $M$ be a finite $S$ module, which is flat over $R$ and
	such that $M/\mf mM\cong S/\mf mS$. Then it follows easily that $M\cong S$. 
	
	Let $q\in U\subset \mc Q$ be a point. The sheaf 
	$\mc B_{\mc Q}$ is a coherent sheaf supported on $Z$, the map $Z\to \mc Q$
	is finite, the fiber 
	$$\mc B_q=\bigoplus \dfrac{\mc O_{C,c_i}}{\mf m^{d_i}_{C,c_i}} \cong \mc O_{\Sigma}\vert_q\cong \mc O_Z\vert_q\,. $$ 
	From the preceding remark it follows that $\mc B_{\mc Q}$ is a line bundle 
	over $Z\cap (C\times U)$.
\end{proof}

\begin{lemma} \label{the open set U}
	Consider the Quot scheme $\mc Q=\mc Q(n,d)$.
	Let $D$ be a smooth projective curve and let 
	$D\to \mc Q$ be a morphism such that its image intersects $U$. Then 
	$([\mc O_{\mc Q}(1)]+[\Delta_d/2])\cdot [D]\geq 0$.
\end{lemma}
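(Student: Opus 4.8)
The plan is to exhibit an honest line bundle $\mc L$ on $\mc Q$ representing the numerical class $[\mc O_{\mc Q}(1)]+[\Delta_d/2]$ together with a global section whose restriction to $D$ is not identically zero. Since a nonzero section of $f^*\mc L$ forces $\deg_D f^*\mc L\geq 0$, this immediately yields $([\mc O_{\mc Q}(1)]+[\Delta_d/2])\cdot[D]\geq 0$, and in fact handles curves $D$ meeting $U$ without any need to analyze the locus $D\setminus f^{-1}(U)$. I would take $\mc L:=\mc O_{\mc Q}(1)\otimes\Phi^*\mc O(\Delta_d/2)$, which manifestly has the desired class, and produce the section using the cyclic structure of the quotients parametrized by $U$.

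First I would rewrite $\mc L$ as the determinant of a map of rank-$d$ bundles pushed forward from $Z=\Phi^*\Sigma$. By G\"ottsche's theorem (as already invoked in the proof of Proposition \ref{difference-B_L}), $\mc G_{d,\mc O_C}=\det p_{C^{(d)}*}\mc O_\Sigma=\mc O(-\Delta_d/2)$, so $\mc O(\Delta_d/2)=(\det p_{C^{(d)}*}\mc O_\Sigma)^{-1}$. Since $\mc O_\Sigma$ is $C^{(d)}$-flat with torsion fibres of degree $d$, Lemma \ref{pullback-lemma}(ii) applied with $\phi=\Phi$ gives $\Phi^*p_{C^{(d)}*}\mc O_\Sigma\cong p_{\mc Q*}\mc O_Z$, and taking determinants yields $\Phi^*\mc O(\Delta_d/2)=(\det p_{\mc Q*}\mc O_Z)^{-1}$. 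Hence $\mc L=\det p_{\mc Q*}\mc B_{\mc Q}\otimes(\det p_{\mc Q*}\mc O_Z)^{-1}$, and a section of $\mc L$ is exactly $\det\phi$ for a map $\phi\colon p_{\mc Q*}\mc O_Z\to p_{\mc Q*}\mc B_{\mc Q}$ of rank-$d$ bundles.

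Next I would fix a point $q\in f(D)\cap U$ and build $\phi$ from a single vector. Because $q\in U$, the fibre $\mc B_q=\bigoplus_i \mc O_{C,c_i}/\mf m_{c_i}^{d_i}$ is isomorphic to $\mc O_A$ for the support divisor $A=\sum_i d_i c_i$, hence is a cyclic $\mc O_C$-module, and likewise $\mc O_{Z_q}\cong\mc O_A$. Restricting the universal surjection $V\otimes\mc O_{C\times\mc Q}\to\mc B_{\mc Q}$ to $Z$ gives $V\otimes\mc O_Z\to\mc B_{\mc Q}$, and for generic $v\in V$ the image generates $\mc B_q$. The induced map $\mc O_Z\xrightarrow{\,v\,}\mc B_{\mc Q}$ pushes forward to $\phi$ as above, and I set $s_v:=\det\phi\in H^0(\mc Q,\mc L)$. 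By Lemma \ref{pullback-lemma} the fibre of $\phi$ at $q$ is $H^0(\mc O_A)\xrightarrow{H^0(v)}H^0(\mc O_A)$, namely multiplication by the unit $v|_A$, hence an isomorphism; therefore $s_v(q)\neq 0$.

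Finally, pulling back along $f$, the section $f^*s_v$ of $f^*\mc L$ is nonzero at any preimage of $q$ in $D$, so $\deg_D f^*\mc L\geq 0$, which is precisely $([\mc O_{\mc Q}(1)]+[\Delta_d/2])\cdot[D]\geq 0$. The hard part will be the third step: verifying that on $U$ the quotient is genuinely cyclic, so that one vector $v\in V$ makes $\phi$ an isomorphism on the fibre over $q$, and that this fibre is computed by Lemma \ref{pullback-lemma}; this is exactly where the hypothesis that the image of $D$ meets $U$ enters. A less clean alternative would be to compute $\deg_D f^*\mc L=\deg_{Z_D}\mc B_D$ by Grothendieck--Riemann--Roch and observe that over $f^{-1}(U)$ the line bundle $\mc B_D$ is a quotient of a trivial bundle, hence globally generated of nonnegative degree; but controlling the contribution of $D\setminus f^{-1}(U)$ makes that route more delicate, which is why I prefer the section argument.
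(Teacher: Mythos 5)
Your argument is correct, and every ingredient you use is available in the paper: that $\mc B_{\mc Q}$ is an $\mc O_Z$-module is the content of the structural lemma preceding Lemma \ref{the open set U}; that $p_{\mc Q*}\mc O_Z$ and $p_{\mc Q*}\mc B_{\mc Q}$ are rank-$d$ bundles whose fibres compute $H^0$ is Lemma \ref{pullback-lemma}; the identity $\Phi^*\mc O(-\Delta_d/2)\cong \det p_{\mc Q*}\mc O_Z$ follows from G\"ottsche's theorem (as used in Proposition \ref{difference-B_L}) together with Lemma \ref{pullback-lemma}(ii); and the cyclicity $\mc B_q\cong \mc O_A$ for $q\in U$ is exactly the Chinese-remainder description in Definition \ref{def-U}, which is where the hypothesis that $f(D)$ meets $U$ enters, just as it should. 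Your route is, however, genuinely different from the paper's proof of this lemma, being instead modeled on the mechanism of Lemma \ref{B_L.D geq 0}: you stay on $\mc Q$, realize the class by the line bundle $\mathcal{H}om(\det p_{\mc Q*}\mc O_Z,\det p_{\mc Q*}\mc B_{\mc Q})$, and exhibit the global section $\det\phi_v$ nonvanishing at a single point $q\in f(D)\cap U$, so that $f^*\mc L$ has a nonzero section and nonnegative degree. The paper instead pulls everything back to $C\times D$, where the same choice of a general vector $v\in V$ produces a short exact sequence $0\to \mc O_\Gamma\to \mc B_D\to F\to 0$ with $F$ of finite support (this step needs the component analysis of $\Gamma$, the open set $U_1$, the points $x_i$, and injectivity of $\mc O_\Gamma\to\mc B_D$, all of which your one-point evaluation avoids), and concludes from ${\rm deg}(f_*\mc B_D)\geq {\rm deg}(f_*\mc O_\Gamma)=[\mc O(-\Delta_d/2)]\cdot[D]$, the last equality quoted from \cite[\S 3]{Pa}. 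Your version buys a cleaner argument and slightly more information --- the base locus of $\mc O_{\mc Q}(1)\otimes\Phi^*\mc O(\Delta_d/2)$ is contained in $\mc Q\setminus U$ --- at the cost of needing the sheaf-level determinant identity rather than just the numerical degree of $f_*\mc O_\Gamma$; the paper's exact-sequence formulation has the advantage that it is the template which iterates to the inductive Lemma \ref{bound on O(1) in terms of diagonals}, where the image of $D$ need not meet $U$ and a pointwise section argument has no direct analogue.
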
 
\begin{proof}
	Denote by $\mc B_D$ the pullback of the universal 
	quotient over $C\times \mc Q$ to $C\times D$. 
	Denote by $i_D:\Gamma\hookrightarrow C\times D$ the pullback of the universal 
	subscheme $\Sigma\hookrightarrow C\times C^{(d)}$ to $C\times D$. Then 
	$\mc B_D$ is supported on $\Gamma$.
	
	Let $\Gamma_i$ be the irreducible components of $\Gamma$. 
	Since $\Gamma \to D$ is flat each $\Gamma_i$ dominates $D$. 
	Let $f:\Gamma \to D$ denote the projection. 
	There is an open subset $U_1\subset D$ such that 
	$$f^{-1}(U_1)=\bigsqcup_i \Gamma_i\cap f^{-1}(U_1)$$
	and $\mc B_D$ restricted to $f^{-1}(U_1)$ is a line bundle. 
	Note that by $\Gamma_i\cap f^{-1}(U_1)$ we mean this open sub-scheme
	of $\Gamma$.
	Fix a closed point $x_i\in \Gamma_i\cap f^{-1}(U_1)$. Consider the quotient 
	$$V\otimes \mc O_{C\times D}\to \mc B_D$$
	and restrict it to the point $x_i$. We get a quotient 
	$$V\to \mc B_D\otimes k(x_i)\to 0\,.$$
	If we pick a general line in $V$, then it surjects onto $\mc B_D\otimes k(x_i)$.
	Thus, for the general element $s\in V$, $s\otimes \mc O_{C\times D}$ 
	surjects onto $\mc B_D\otimes k(x_i)$.
	This map factors through $\mc O_\Gamma$, and we get an 
	exact sequence
	$$0 \to \mc O_{\Gamma} \to \mc B_D \to F \to 0$$
	where $F$ is supported on a 0 dimensional scheme. 
	Then we have
	$$0 \to f_*\mc O_{\Gamma} \to f_*\mc B_D \to f_*F \to 0\,.$$
	Since $f_*F$ is again supported on finitely many points, hence we have
	$${\rm deg}(f_*\mc B_D)-{\rm deg}(f_*\mc O_{\Gamma})\geq 0$$
	By Lemma \ref{pullback-lemma}, ${\rm deg}(f_*\mc B_D)=[\mc O_{\mc Q}(1)]\cdot [D]$ and by \cite[\S 3]{Pa} we have 
	$${\rm deg}(f_*\mc O_{\Gamma})=[\mc O(-\Delta_d/2)]\cdot [D]\,.$$ 
	Hence the result follows.
\end{proof}

\begin{corollary}\label{cor-image-meets-U}
	If the image of $f:D\to \mc Q$ interects $U$, then
	$([\mc O_{\mc Q}(1)]+\mu_0[L_0])\cdot [D]\geq 0$.
\end{corollary}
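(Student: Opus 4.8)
The plan is to derive the Corollary directly from Lemma \ref{the open set U} by reinterpreting the class $[\mc O_{\mc Q}(1)]+\mu_0[L_0]$ in terms of the class $[\mc O_{\mc Q}(1)]+[\Delta_d/2]$, which is already known to pair non-negatively with any curve meeting $U$. The crucial observation is that these two classes differ only by a pullback from $C^{(d)}$, and that pullback turns out to be a nef class which pairs non-negatively with every curve.

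First I would recall from Lemma \ref{E in terms of various bases} the identity $[\theta_d]=(d+g-1)[x]-[\Delta_d/2]$, equivalently $[\Delta_d/2]=(d+g-1)[x]-[\theta_d]$, together with $[L_0]=dg[x]-[\theta_d]$. The aim is to express $[\mc O_{\mc Q}(1)]+\mu_0[L_0]$ as $[\mc O_{\mc Q}(1)]+[\Delta_d/2]$ plus a non-negative combination of $[\theta_d]$ and $[L_0]$. Using $\mu_0=\frac{d+g-1}{dg}$, a short computation in $N^1(C^{(d)})$ (working in the basis $[x],[\theta_d]$ and pulling back along $\Phi$) should yield
\begin{equation}\label{cor-identity}
	\mu_0[L_0]-[\Delta_d/2]=\mu_0[L_0]-\bigl((d+g-1)[x]-[\theta_d]\bigr).
\end{equation}
Substituting $[L_0]=dg[x]-[\theta_d]$ gives $\mu_0 dg=d+g-1$, so the $[x]$-terms cancel, leaving $\mu_0[L_0]-[\Delta_d/2]=(1-\mu_0)[\theta_d]$. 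Thus
\begin{equation}\label{cor-final-identity}
	[\mc O_{\mc Q}(1)]+\mu_0[L_0]=\bigl([\mc O_{\mc Q}(1)]+[\Delta_d/2]\bigr)+(1-\mu_0)[\theta_d].
\end{equation}

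Then I would conclude as follows. By Lemma \ref{the open set U}, since the image of $f$ meets $U$, we have $\bigl([\mc O_{\mc Q}(1)]+[\Delta_d/2]\bigr)\cdot[D]\geq 0$. The coefficient $1-\mu_0=1-\frac{d+g-1}{dg}=\frac{dg-d-g+1}{dg}=\frac{(d-1)(g-1)}{dg}\geq 0$ for $g,d\geq 1$. Finally $[\theta_d]=u_d^*\Theta$ is nef on $C^{(d)}$, so its pullback $\Phi^*[\theta_d]$ is nef on $\mc Q$ and hence $[\theta_d]\cdot[D]\geq 0$ for the effective curve class $[D]$. Pairing \eqref{cor-final-identity} with $[D]$ and adding the two non-negative contributions gives $\bigl([\mc O_{\mc Q}(1)]+\mu_0[L_0]\bigr)\cdot[D]\geq 0$, as required.

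I do not anticipate a genuine obstacle here, since the substance is carried entirely by Lemma \ref{the open set U}; the main thing to verify carefully is the numerical identity \eqref{cor-final-identity}, in particular that the coefficient $1-\mu_0$ is non-negative in the stated range of $g$ and $d$ and that $[\theta_d]$ genuinely pairs non-negatively with $[D]$ via nefness of $\Theta$ on the Jacobian. One should also confirm that the pairings in $N^1(\mc Q)$ are computed correctly through $\Phi_*$ and the projection formula, so that the identity among classes on $C^{(d)}$ legitimately transfers to the corresponding pullback identity on $\mc Q$.
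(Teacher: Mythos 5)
Your proposal is correct and takes essentially the same route as the paper: the paper's proof also applies Lemma \ref{the open set U} and then uses the identity $[\Delta_d/2]=\mu_0[L_0]-(1-\mu_0)[\theta_d]$ from Lemma \ref{E in terms of various bases} together with the nefness of $[\theta_d]$. The only cosmetic difference is that you re-derive that identity by hand in the $[x],[\theta_d]$ basis (verifying $1-\mu_0=\frac{(d-1)(g-1)}{dg}\geq 0$ explicitly), whereas the paper quotes it directly.
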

\begin{proof}
If its image interects $U$, then by Lemma \ref{the open set U}, 
$$([\mc O_{\mc Q}(1)]+[\Delta_d/2])\cdot [D]\geq 0\,.$$ 
By Lemma \ref{E in terms of various bases}, 
$$[\Delta_d/2]=\mu_0[L_0]-(1-\mu_0)[\theta_d]\,.$$
Since $\theta_d$ is nef, we have that
$$([\mc O_{\mc Q}(1)]+\mu_0[L_0])\cdot [D]\geq 0\,.$$
\end{proof}

\begin{lemma} \label{the complement of open set U}
	Consider the Quot scheme $\mc Q=\mc Q(n,d)$.
	Let $D$ be a smooth projective curve and let 
	$f:D\to (\mc Q\setminus U) \subset \mc Q$ be a morphism. 
	Then $([\mc O_{\mc Q}(1)]+(d+g-2)[x])\cdot [D]\geq 0$.
\end{lemma}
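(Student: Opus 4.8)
The plan is to recognize the divisor class in question as $[B_{L,\mc Q}]$ for a suitable line bundle $L$ on $C$ of degree $d+g-2$, and then apply Lemma \ref{B_L.D geq 0}. Indeed, by Corollary \ref{compare-B_L-O(1)} we have $[B_{L,\mc Q}]=[\mc O_{\mc Q}(1)]+{\rm deg}(L)[x]$, so taking ${\rm deg}(L)=d+g-2$ turns the assertion $([\mc O_{\mc Q}(1)]+(d+g-2)[x])\cdot[D]\geq 0$ into $[B_{L,\mc Q}]\cdot[D]\geq 0$. The whole content is that a curve $D$ landing in $\mc Q\setminus U$ allows the degree of $L$ to be lowered from the $d+g-1$ of Lemma \ref{line bundles restricted to a divisor} (and Proposition \ref{nef cone of quot schemes }) to $d+g-2$, because the scheme-theoretic support of the relevant quotient has degree at most $d-1$ rather than $d$. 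First I would fix any point $q\in f(D)$, which by hypothesis lies in $\mc Q\setminus U$, and take $A$ to be the scheme-theoretic support of $\mc B_q$, i.e. the effective divisor ${\rm Spec}(\mc O_C/{\rm Ann}(\mc B_q))$.

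The crucial step is the bound ${\rm deg}(A)\leq d-1$. Writing $\mc B_q=\bigoplus_c(\mc B_q)_c$ and decomposing each local piece over the discrete valuation ring $\mc O_{C,c}$ as $\bigoplus_j \mc O_{C,c}/\mf m^{a_j(c)}_{C,c}$ with $a_1(c)\geq a_2(c)\geq\cdots$, the annihilator at $c$ is $\mf m^{a_1(c)}_{C,c}$, so $A$ contributes $a_1(c)$ to its degree at $c$ while $\mc B_q$ contributes its length $\sum_j a_j(c)$ there. By Definition \ref{def-U}, a point lies in $U$ exactly when $\mc B_q$ is a cyclic $\mc O_C$-module; since $q\notin U$, the sheaf $\mc B_q$ is non-cyclic, so there is at least one point $c_0$ with $\dim_k(\mc B_q\otimes k(c_0))\geq 2$, hence at least two summands at $c_0$. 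At $c_0$ the length therefore exceeds the support-degree by at least $1$, and summing over all points gives ${\rm deg}(A)=\sum_c a_1(c)\leq \big(\sum_c\sum_j a_j(c)\big)-1=d-1$.

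Next I would produce $L$. By Serre duality, surjectivity of $H^0(L)\to H^0(L|_A)$ is equivalent to $H^1(L\otimes\mc O_C(-A))=0$, i.e. to $H^0(L^\vee\otimes K_C\otimes\mc O_C(A))=0$. Choosing $L$ of degree $d+g-2$, the bundle $L^\vee\otimes K_C\otimes\mc O_C(A)$ has degree ${\rm deg}(A)-(d+g-2)+(2g-2)={\rm deg}(A)-d+g\leq g-1$; and as $L$ ranges over ${\rm Pic}^{d+g-2}(C)$, this bundle ranges over the whole of ${\rm Pic}^{{\rm deg}(A)-d+g}(C)$. Since a general line bundle of degree at most $g-1$ has no global sections (exactly as in the proof of Lemma \ref{line bundles restricted to a divisor}), I may fix such an $L$ of degree $d+g-2$ with $H^0(L)\to H^0(L|_A)$ surjective. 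Then Lemma \ref{B_L.D geq 0} applies with this $q$, $A$ and $L$, yielding $[B_{L,\mc Q}]\cdot[D]\geq 0$, which by Corollary \ref{compare-B_L-O(1)} is precisely $([\mc O_{\mc Q}(1)]+(d+g-2)[x])\cdot[D]\geq 0$.

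I expect the main obstacle to be the degree bound ${\rm deg}(A)\leq d-1$: this is the only place the hypothesis $f(D)\subset\mc Q\setminus U$ is used, and it is the geometric heart of the statement, since non-cyclicity of the universal quotient at $q$ is exactly what forces the scheme-theoretic support to have strictly smaller degree than the length. Everything else is routine bookkeeping combining Lemmas \ref{B_L.D geq 0} and \ref{line bundles restricted to a divisor} with Corollary \ref{compare-B_L-O(1)}; the one small check to be careful about is that the Serre-duality computation still produces a line bundle of the exact target degree $d+g-2$, which works precisely because the improved bound ${\rm deg}(A)\leq d-1$ keeps ${\rm deg}(L^\vee\otimes K_C\otimes\mc O_C(A))\leq g-1$.
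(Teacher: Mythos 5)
Your proof is correct and takes essentially the same route as the paper: both use $q\notin U$ to bound the degree $d'$ of the scheme-theoretic support $A$ of $\mc B_q$ by $d-1$, produce via Serre duality a line bundle $L$ with $H^0(L)\to H^0(L|_A)$ surjective, and conclude by combining Lemma \ref{B_L.D geq 0} with Corollary \ref{compare-B_L-O(1)}. The only cosmetic differences are that the paper applies Lemma \ref{line bundles restricted to a divisor} to get $L$ of degree $d'+g-1$ and then pads up to $d+g-2$ using nefness of $[x]$, whereas you build $L$ of degree exactly $d+g-2$ directly (rechecking the Serre-duality vanishing), and you spell out the bound $d'\leq d-1$ --- non-cyclicity of $\mc B_q$ forcing an extra summand at some point --- which the paper asserts without proof.
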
 

\begin{proof}
	Fix a point $q\in f(D)$. 
	Let $A$ be the scheme theoretic support of the quotient $\mc B_{q}$ on 
	$C$. Let ${\rm deg}(A)=d'$. Since $q\notin \mc U$, we have $d'<d$. By
	Lemma \ref{line bundles restricted to a divisor} we have a line bundle 
	$L$ of degree $d'+g-1$ such that $H^0(L)\to H^0(L|_A)$ is surjective.
	By Lemma \ref{B_L.D geq 0} and Corollary \ref{compare-B_L-O(1)} we  
	get that $[B_{L,\mc Q}]\cdot [D]=([\mc O_{\mc Q}(1)]+(d'+g-1)[x])\cdot [D]\geq 0$. Since $ 
	[x]$ is nef on $\mc Q$	and $d'\leq d-1$ we get that $([\mc O_{\mc Q}(1)]+(d+g-2)
	[x])\cdot [D]\geq  
	0$.	
\end{proof}

\begin{proposition}\label{some line bundle is nef }
	Consider the Quot scheme $\mc Q=\mc Q(n,d)$.
	Let $\mu_0=\dfrac{d+g-1}{dg}$. 
	Then the class 
	$\kappa_1:=[\mc O_{\mc Q}(1)]+\mu_0[L_0]+\dfrac{d+g-2}{dg}[\theta_d]$ is nef. 
\end{proposition}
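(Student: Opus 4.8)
The plan is to show that $\kappa_1 \cdot [D] \geq 0$ for every irreducible curve $D \subset \mathcal{Q}$, splitting into two cases according to whether the image of $D$ meets the open set $U$ of Definition \ref{def-U}. This dichotomy is exactly the one set up by the two preceding lemmas, so the proof amounts to reconciling their conclusions with the explicit expression for $\kappa_1$.

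First I would record, via Lemma \ref{E in terms of various bases}, the identity $[\Delta_d/2] = \mu_0[L_0] - (1-\mu_0)[\theta_d]$, which converts the $\Delta_d/2$ appearing in Lemma \ref{the open set U} into the $L_0$–$\theta_d$ basis. In the case where $f(D)$ meets $U$, Corollary \ref{cor-image-meets-U} already gives $([\mathcal{O}_{\mathcal{Q}}(1)] + \mu_0[L_0]) \cdot [D] \geq 0$; since the remaining summand $\tfrac{d+g-2}{dg}[\theta_d]$ of $\kappa_1$ pairs non-negatively with $[D]$ because $\theta_d$ is nef, we conclude $\kappa_1 \cdot [D] \geq 0$ in this case.

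The second case, where $f(D) \subset \mathcal{Q} \setminus U$, is where the real content lies. Here Lemma \ref{the complement of open set U} supplies $([\mathcal{O}_{\mathcal{Q}}(1)] + (d+g-2)[x]) \cdot [D] \geq 0$. The task is then to verify that $\kappa_1$ can be written as a non-negative combination of this nef class $[\mathcal{O}_{\mathcal{Q}}(1)] + (d+g-2)[x]$ together with $[\theta_d]$ and $[L_0]$ (both of which are nef by Section \ref{section-nef-C^{(d)}}). Using $[L_0] = dg[x] - [\theta_d]$ from Lemma \ref{E in terms of various bases}, I would substitute to express $\kappa_1$ purely in terms of $[\mathcal{O}_{\mathcal{Q}}(1)]$, $[x]$, and $[\theta_d]$, and then check that the coefficient of $[x]$ matches $(d+g-2)$ while the leftover $[\theta_d]$-coefficient is non-negative. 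Concretely, $\mu_0[L_0] + \tfrac{d+g-2}{dg}[\theta_d] = \mu_0 dg [x] - \mu_0[\theta_d] + \tfrac{d+g-2}{dg}[\theta_d]$, and since $\mu_0 dg = d+g-1 > d+g-2$, one can peel off $(d+g-2)[x]$ plus a non-negative multiple of $[x]$ (absorbable back into $[L_0]$ and $[\theta_d]$) to realize $\kappa_1$ as a non-negative combination of nef classes.

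The main obstacle I anticipate is bookkeeping rather than conceptual: one must be careful that after rewriting, every coefficient in front of a nef generator is genuinely $\geq 0$, and that the algebraic identities from Lemma \ref{E in terms of various bases} are applied consistently in whichever two-element basis is most convenient. The geometry is entirely delegated to Lemmas \ref{the open set U} and \ref{the complement of open set U} together with the nefness of $[\theta_d]$, $[L_0]$, and $[x]$; so the proof should reduce, in each of the two cases, to exhibiting $\kappa_1$ as a manifestly non-negative combination and invoking that an arbitrary irreducible curve $D$ falls into one of the two cases.
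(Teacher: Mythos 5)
Your proposal is correct and follows essentially the same route as the paper: the identical dichotomy on whether $f(D)$ meets $U$, with Lemma \ref{the open set U} (packaged as Corollary \ref{cor-image-meets-U}) plus nefness of $[\theta_d]$ handling the first case, and Lemma \ref{the complement of open set U} together with the identities of Lemma \ref{E in terms of various bases} handling the second, where the paper's bookkeeping amounts to exactly your peel-off, namely $\kappa_1=\bigl([\mathcal{O}_{\mathcal{Q}}(1)]+(d+g-2)[x]\bigr)+\tfrac{1}{dg}[L_0]$. One small caution: after extracting $(d+g-2)[x]$ the raw $[\theta_d]$-coefficient is $-\tfrac{1}{dg}$, not non-negative as you state mid-argument --- it is precisely your parenthetical absorption of the spare copy of $[x]=\tfrac{1}{dg}([L_0]+[\theta_d])$ that cancels it and yields a manifestly non-negative combination, which is how the paper concludes as well.
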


\begin{proof}
	Let $D\to \mc Q$ is a morphism, where $D$ is a smooth projective curve. 
	If the image of this morphism intersects $U$ then by Lemma \ref{the open set U}
	we have $([\mc O_{\mc Q}(1)]+ [\Delta_d/2])\cdot [D]\geq 0$. 
	By Lemma \ref{E in terms of various bases} we have
	$[\Delta_d/2]=\mu_0[L_0]-(1-\mu_0)[\theta_d]$. 
	Hence we get 
	$$([\mc O_{\mc Q}(1)]+\mu_0[L_0])\cdot [D]\geq (1-\mu_0)[\theta_d]\cdot [D]\geq 0\,.$$
	Since $[\theta_d]$ is nef, we get 
	$$([\mc O_{\mc Q}(1)]+\mu_0[L_0]) \cdot [D] +\dfrac{d+g-2}{dg}[\theta_d] \cdot [D] 
		\geq 0\,. $$
	Now assume $D\to \mc Q$ does not intersect $U$. Then by Lemma 
	\ref{the complement of open set U} we get 
	$$([\mc O_{\mc Q}(1)] +(d+g-2)[x])\cdot [D]\geq 0\,.$$
	By Lemma \ref{E in terms of various bases} we have 
	$[x]=\dfrac{1}{dg}[L_0]+\dfrac{1}{dg}[\theta_d]$.
	Therefore 
	\begin{align*}
		(d+g-2)[x] = & \dfrac{d+g-2}{dg}[L_0]+\dfrac{d+g-2}{dg}[\theta_d]     \\
		= & \mu_0[L_0]-\dfrac{1}{dg}[L_0]+\dfrac{d+g-2}{dg}[\theta_d]\,.
	\end{align*}
	Since $L_0$ is nef we get that 
	$$([\mc O_{\mc Q}(1)]  +\mu_0[L_0]  +\dfrac{d+g-2}{dg}[\theta_d]) \cdot [D] \geq 0 \,. $$
\end{proof}

\begin{lemma}\label{B_L-not-ample}
	Let $L$ be a line bundle on $C$ of degree $d+g-1$.
	If $d\geq {\rm gon}(C)$ then the line bundle $B_{L,\mc Q}$
	is not ample. Moreover, for any $t\in [0,1]$ the 
	class $t[B_{L,\mc Q}]+(1-t)[\theta_d]$ is nef but not ample.
\end{lemma}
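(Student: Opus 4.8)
The plan is to treat nefness and non-ampleness separately, exhibiting a single explicit curve to witness the failure of ampleness; the ``moreover'' assertion will then fall out because that same curve also meets $\theta_d$ in degree zero, so the entire segment joining the two classes lies on the boundary of $\mathrm{Nef}(\mc Q)$.

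For nefness I would begin from Corollary \ref{compare-B_L-O(1)}, which gives $[B_{L,\mc Q}]=[\mc O_{\mc Q}(1)]+(d+g-1)[x]$ because $\deg L=d+g-1$. Substituting the identity $[x]=\frac{1}{dg}[L_0]+\frac{1}{dg}[\theta_d]$ that is contained in Lemma \ref{E in terms of various bases}, I obtain
\[
[B_{L,\mc Q}]=[\mc O_{\mc Q}(1)]+\mu_0[L_0]+\mu_0[\theta_d]=\kappa_1+\tfrac{1}{dg}[\theta_d],
\]
where $\mu_0=\frac{d+g-1}{dg}$ and $\kappa_1$ is the class of Proposition \ref{some line bundle is nef }. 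Since $\kappa_1$ is nef by that proposition and $[\theta_d]$ is nef, their nonnegative combination $B_{L,\mc Q}$ is nef.

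For non-ampleness I would exhibit a curve on which $B_{L,\mc Q}$ has degree zero, the natural candidate being $\eta_*[l']$: the image under the section $\eta$ of \eqref{section-HC} of the rational curve $[l']\subset C^{(d)}$ from Definition \ref{p1-in-C^(d)}. This is where the hypothesis is used, since $[l']$ exists precisely because $d\geq\mathrm{gon}(C)$, and it is a genuine irreducible curve because $\eta$ is a closed immersion. Taking $\eta$ to be built from the trivial quotient $M=\mc O_C$ and applying Lemma \ref{restriction of O(1) to symmetric product} together with G\"ottsche's formula $\mc G_{d,L}=L^{\boxtimes d}\otimes\mc O(-\Delta_d/2)$, I get in $N^1(C^{(d)})$ that $\eta^*[B_{L,\mc Q}]=(d+g-1)[x]-[\Delta_d/2]$, which equals $[\theta_d]$ by the relation $[\theta_d]=(d+g-1)[x]-[\Delta_d/2]$ recorded in Lemma \ref{E in terms of various bases}. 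Hence by the projection formula
\[
[B_{L,\mc Q}]\cdot\eta_*[l']=\eta^*[B_{L,\mc Q}]\cdot[l']=[\theta_d]\cdot[l']=0,
\]
the last vanishing being the one for $\theta_d$ and $l'$ in Proposition \ref{nef-cone-C^(d)}. Since an ample class is strictly positive on every irreducible curve, $B_{L,\mc Q}$ cannot be ample.

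Finally, for the segment $t[B_{L,\mc Q}]+(1-t)[\theta_d]$ with $t\in[0,1]$, nefness is immediate from convexity of $\mathrm{Nef}(\mc Q)$ since both endpoints are nef, and non-ampleness follows by testing against the same curve: as $[\theta_d]=\Phi^*[\theta_d]$ pulls back under the section $\eta$ to $[\theta_d]\in N^1(C^{(d)})$, we have $[\theta_d]\cdot\eta_*[l']=[\theta_d]\cdot[l']=0$, whence $\big(t[B_{L,\mc Q}]+(1-t)[\theta_d]\big)\cdot\eta_*[l']=0$ for every $t$. I expect the only step requiring genuine care to be the pullback identification $\eta^*[B_{L,\mc Q}]=[\theta_d]$, together with the verification that $[l']$ does represent an effective curve (which is exactly the role of $d\geq\mathrm{gon}(C)$); once these are in place, everything else is formal from the earlier lemmas and the characterization of ampleness.
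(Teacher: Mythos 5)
Your proposal is correct and follows essentially the same route as the paper: the paper likewise proves nefness of $B_{L,\mc Q}$ by writing it as a positive combination of $\kappa_1$ from Proposition \ref{some line bundle is nef } and $[\theta_d]$ (this is exactly Proposition \ref{nef-B_L}), and proves non-ampleness by computing $\eta^*[B_{L,\mc Q}]=(d+g-1)[x]-[\Delta_d/2]=[\theta_d]$ via Lemma \ref{restriction of O(1) to symmetric product}, G\"ottsche's formula, and Lemma \ref{E in terms of various bases}, then invoking that $\theta_d$ is not ample on $C^{(d)}$ when $d\geq{\rm gon}(C)$. Your only variation is cosmetic: you unwind that last citation by testing directly against the curve $\eta_*[l']$, which is precisely the mechanism behind the non-ampleness of $\theta_d$ in Proposition \ref{nef-cone-C^(d)}.
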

\begin{proof}
	We saw in the last para of the proof of Proposition \ref{difference-B_L}
	that $\eta^*B_{L,\mc Q}=L^{\boxtimes d}\otimes \mc O(-\Delta_d/2)$.
	Its class in the nef cone is $(d+g-1)[x]-[\Delta_d/2]$. It follows from
	Lemma \ref{E in terms of various bases} that this is equal to $[\theta_d]$. 
	Since $d\geq {\rm gon}(C)$ we have $\theta_d$ is not ample on $C^{(d)}$. 
	That $t[B_{L,\mc Q}]+(1-t)[\theta_d]$ is nef is clear since 
	both $[B_{L,\mc Q}]$ and $[\theta_d]$ are nef. This is not ample
	since $\eta^*$ of this class is $[\theta_d]$ on 
	$C^{(d)}$, which is not ample. 
\end{proof}

\begin{proposition}\label{nef-B_L}
	Consider the Quot scheme $\mc Q=\mc Q(n,d)$.
	Then the class $[\mc O_{\mc Q}(1)]+(d+g-1)[x]\in N^1(\mc Q)$
	is nef.
\end{proposition}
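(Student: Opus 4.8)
The plan is to recognize the target class as $[B_{L,\mc Q}]$ for a suitable line bundle $L$ and then test nefness curve by curve using Lemma \ref{B_L.D geq 0}, taking advantage of the fact that the numerical class of $B_{L,\mc Q}$ depends only on $\deg L$.

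First I would record, via Corollary \ref{compare-B_L-O(1)}, that for every line bundle $L$ on $C$ of degree $d+g-1$ one has $[B_{L,\mc Q}]=[\mc O_{\mc Q}(1)]+(d+g-1)[x]$ in $N^1(\mc Q)$. Thus the class to be shown nef is represented by $B_{L,\mc Q}$ for any such $L$, and for a fixed curve its intersection number is the same no matter which degree-$(d+g-1)$ bundle $L$ we use. This is the observation that makes everything work: although Lemma \ref{B_L.D geq 0} only produces non-negativity for a line bundle $L$ chosen to suit a given curve, the resulting number is always the intersection with one fixed numerical class.

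Since $\mc Q$ is a smooth projective variety, it suffices to verify $([\mc O_{\mc Q}(1)]+(d+g-1)[x])\cdot [D]\geq 0$ for every morphism $f:D\to \mc Q$ with $D$ a smooth projective curve (passing to normalizations of irreducible curves). Given such an $f$, I would fix a closed point $q\in f(D)$ and set $A:=\Phi(q)$, viewed as an effective divisor of degree $d$ on $C$. Because the Hilbert--Chow map $\Phi$ records the full length of $\mc B_q$ at each point of $C$, this $A$ contains the scheme-theoretic support of $\mc B_q$. Lemma \ref{line bundles restricted to a divisor}, applied to $A$, then supplies a line bundle $L$ of degree $d+g-1$ for which $H^0(L)\to H^0(L|_A)$ is surjective.

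Finally I would invoke Lemma \ref{B_L.D geq 0} with this $q$, $A$ and $L$ to conclude $[B_{L,\mc Q}]\cdot [D]\geq 0$, which by the identification above is exactly $([\mc O_{\mc Q}(1)]+(d+g-1)[x])\cdot [D]\geq 0$. As $D$ is arbitrary, the class is nef. I do not anticipate a serious obstacle: the only subtlety is that the bundle $L$ must be allowed to depend on the curve $D$, and the point is that this dependence is invisible at the level of numerical classes by Corollary \ref{compare-B_L-O(1)}.
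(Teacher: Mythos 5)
Your proof is correct, but it takes a genuinely different route from the paper's. The paper proves Proposition \ref{nef-B_L} in one line, by writing $[\mc O_{\mc Q}(1)]+(d+g-1)[x]=\kappa_1+\tfrac{1}{dg}[\theta_d]$ (using $[x]=\tfrac{1}{dg}([L_0]+[\theta_d])$ from Lemma \ref{E in terms of various bases}), where $\kappa_1=[\mc O_{\mc Q}(1)]+\mu_0[L_0]+\tfrac{d+g-2}{dg}[\theta_d]$ is the class already proved nef in Proposition \ref{some line bundle is nef }; that earlier proposition in turn rests on the dichotomy of whether a curve $D\to\mc Q$ meets the open set $U$ (Lemma \ref{the open set U}) or lies in its complement (Lemma \ref{the complement of open set U}). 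You bypass this dichotomy entirely and argue uniformly: for each curve you pick $q\in f(D)$, take $A=\Phi(q)$ of degree exactly $d$ (which contains the scheme-theoretic support of $\mc B_q$, as the length at each point bounds the multiplicity in the annihilator), produce an adapted $L$ of degree $d+g-1$ via Lemma \ref{line bundles restricted to a divisor}, and conclude via Lemma \ref{B_L.D geq 0} together with the key numerical observation from Corollary \ref{compare-B_L-O(1)} that $[B_{L,\mc Q}]$ depends only on $\deg L$, so the $D$-dependence of $L$ is harmless. This is sound, and in fact it is exactly the pattern the paper itself deploys elsewhere --- in the genus-$0$ Proposition (nefness of $B_{\mc O(d-1),\mc Q}$) and in Proposition \ref{nef-B_L-Pa}, whose proof fixes $A$ as the divisor of $\Phi(q)$ and chooses $L$ adapted to $A$ in just this way. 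What each approach buys: yours is more direct and self-contained, needing none of the $U$/$U^{c}$ analysis; the paper's route factors through the strictly stronger statement that $\kappa_1$ is nef (your target class equals $\kappa_1$ \emph{plus} $\tfrac{1}{dg}[\theta_d]$, so $\kappa_1$ is a better boundary class), and that stronger statement --- which your single-section argument cannot recover, since Lemma \ref{B_L.D geq 0} can only ever yield classes of the form $[\mc O_{\mc Q}(1)]+(\deg L)[x]$ --- is what the paper actually needs for its lower bound on ${\rm Nef}(\mc Q)$ and the boundary analysis in Lemma \ref{B_L-not-ample}.
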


\begin{proof}
	It is easily checked that the class $[\mc O_{\mc Q}(1)]+(d+g-1)[x]$
	can be written as a positive linear combination of $[\theta_d]$
	and the class in Proposition \ref{some line bundle is nef }.
\end{proof}

We may slightly improve Proposition \ref{nef-B_L} in a special case using the 
results in \cite{Pa}. For this we first recall the main
results in \cite[\S4]{Pa}. Let $C$ be a very general curve of genus $g(C)=2k$.
Since the gonality is given by $\lfloor\frac{g+3}{2}\rfloor$, 
in this case it is $k+1$. Let $L_i'$ denote the finitely
many $g^1_{k+1}$'s on $C$ and define $L_i=K_C-L_i'$.  
Then ${\rm deg}(L_i)=3(k-1)$.
It is proved in \cite[Proposition 3.6, Theorem 4.1]{Pa} that $\mc G_{k,L_i}$ 
is nef but not ample. 

\begin{proposition}\label{nef-B_L-Pa}
	Let $C$ be a very general curve of genus $g(C)=2k$. 
	Consider the Quot scheme $\mc Q=\mc Q(n,k)$.
	The line bundle $B_{L,\mc Q}$ is nef when 
	${\rm deg}(L)\geq 3(k-1)$. When ${\rm deg}(L)=3(k-1)$ 
	the class $t[B_{L,\mc Q}]+(1-t)[\mc G_{k,L}]$ is nef but not ample
	for any $t\in [0,1]$.
\end{proposition}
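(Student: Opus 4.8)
The plan is to prove nefness first and deduce the ``nef but not ample'' assertion afterwards. Throughout set $d=k$ and $g=2k$, so $3(k-1)=d+g-3$, exactly two less than the bound $d+g-1$ of Proposition \ref{nef-B_L}. By Corollary \ref{compare-B_L-O(1)} the class $[B_{L,\mc Q}]=[\mc O_{\mc Q}(1)]+\deg(L)[x]$ depends only on $\deg(L)$, and by Lemma \ref{E in terms of various bases} the class $[x]=\tfrac{1}{dg}([L_0]+[\theta_d])$ is nef. Hence it suffices to prove that $[\mc O_{\mc Q}(1)]+3(k-1)[x]$ is nef: for $\deg(L)>3(k-1)$ one adds a nonnegative multiple of the nef class $[x]$, and in the equality case we take $L=L_i=K_C-L_i'$, one of the finitely many line bundles of degree $3(k-1)$ introduced before the statement. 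The structural input is the decomposition in $N^1(\mc Q)$
$$[B_{L_i,\mc Q}]=\big([\mc O_{\mc Q}(1)]+[\Delta_k/2]\big)+\Phi^*[\mc G_{k,L_i}],$$
which follows from $[\mc G_{k,L_i}]=3(k-1)[x]-[\Delta_k/2]$ together with $[\Delta_k/2]=(3k-1)[x]-[\theta_k]$ (Lemma \ref{E in terms of various bases}). By \cite[Proposition 3.6, Theorem 4.1]{Pa} the class $\mc G_{k,L_i}$ is nef on $C^{(k)}$, so its pullback $\Phi^*[\mc G_{k,L_i}]$ is nef on $\mc Q$; thus nefness of $[B_{L_i,\mc Q}]$ reduces to controlling the first summand against curves.

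Given a morphism $f\colon D\to\mc Q$ from a smooth projective curve, I would split into cases according to the image. If $f(D)$ meets the open set $U$ of Definition \ref{def-U}, then Lemma \ref{the open set U} gives $([\mc O_{\mc Q}(1)]+[\Delta_k/2])\cdot[D]\geq 0$, and adding the nonnegative number $\Phi^*[\mc G_{k,L_i}]\cdot[D]$ yields $[B_{L_i,\mc Q}]\cdot[D]\geq 0$. If instead $f(D)\subset\mc Q\setminus U$, I would proceed as in Lemma \ref{the complement of open set U}: at a point $q\in f(D)$ the scheme-theoretic support $A$ of $\mc B_q$ is reduced of some degree $d'<k$, and whenever $d'\leq k-2$ the line bundle of degree $d'+g-1\leq 3(k-1)$ furnished by Lemma \ref{line bundles restricted to a divisor} feeds into Lemma \ref{B_L.D geq 0} to give $([\mc O_{\mc Q}(1)]+3(k-1)[x])\cdot[D]\geq 0$ directly, using once more that $[x]$ is nef.

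The hard part is the remaining case: curves $f\colon D\to\mc Q\setminus U$ whose support has degree exactly $k-1$ at every point, so that $\Phi\circ f$ maps $D$ into the big diagonal $\Delta_k\subset C^{(k)}$. Here Lemma \ref{B_L.D geq 0} is genuinely useless, since surjectivity of $H^0(L)\to H^0(L|_A)$ would force $h^0(K_C-L+A)=0$, whereas for $\deg(L)=3(k-1)$ and $\deg(A)\in\{k-1,k\}$ the bundle $K_C-L+A$ has degree $g$ or $g+1$ and therefore always has sections. I expect this case to be handled by restricting the whole problem to the diagonal: using the parametrization $C\times C^{(k-2)}\to C^{(k)}$, $(x,A')\mapsto 2x+A'$, one pulls the displayed identity back along $D\to\Delta_k$ and invokes the same gonality input from \cite{Pa} that underlies the nefness of $\mc G_{k,L_i}$, namely the behaviour of $L_i=K_C-L_i'$ relative to the $g^1_{k+1}$ pencils, to bound the fibrewise contribution of $\mc O_{\mc Q}(1)$ along $D$. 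Transferring this diagonal analysis from $C^{(k)}$ to $\mc Q$ is where the essential difficulty lies.

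Finally, the ``nef but not ample'' statement is immediate once nefness is established. Choosing the section $\eta$ of $\Phi$ associated to the trivial quotient $E\to\mc O_C$ (so that $M=\mc O_C$), Lemma \ref{restriction of O(1) to symmetric product} gives $\eta^*B_{L,\mc Q}=\mc G_{k,L}$, while $\eta^*\Phi^*\mc G_{k,L}=\mc G_{k,L}$ because $\Phi\circ\eta=\mathrm{id}$. Hence $\eta^*\big(t[B_{L,\mc Q}]+(1-t)[\mc G_{k,L}]\big)=[\mc G_{k,L}]$ for every $t\in[0,1]$, which is nef but not ample on $C^{(k)}$ by \cite[Proposition 3.6, Theorem 4.1]{Pa}. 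Since the restriction of an ample class is ample, the class $t[B_{L,\mc Q}]+(1-t)[\mc G_{k,L}]$ cannot be ample on $\mc Q$, while it is a convex combination of the two nef classes $[B_{L,\mc Q}]$ and $\Phi^*[\mc G_{k,L}]$ and is therefore nef.
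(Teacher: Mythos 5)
Your proof has a genuine gap, and it sits exactly where you say the ``essential difficulty lies'': the case of curves $f\colon D\to\mc Q$ all of whose points have quotient supported on a divisor of degree $k-1$ (or more generally where your degree count fails) is never actually proved --- you only sketch a hope that a diagonal analysis on $C\times C^{(k-2)}$ might work. Worse, the reason you abandon Lemma \ref{B_L.D geq 0} in that case rests on a false cohomological claim. Surjectivity of $H^0(L)\to H^0(L|_A)$ is \emph{not} equivalent to $h^0(K_C\otimes L^{-1}(A))=0$: from $0\to L(-A)\to L\to L|_A\to 0$ it is equivalent to injectivity of $H^1(L(-A))\to H^1(L)$, and this can hold even when $h^1(L(-A))>0$. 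The vanishing $h^1(L(-A))=0$ is merely the sufficient condition used in Lemma \ref{line bundles restricted to a divisor} for a \emph{general} $L$; it is indeed unavailable here, but that does not make the lemma ``genuinely useless.'' The entire point of the input from \cite{Pa} that the paper quotes --- statement (A) of \cite[Theorem 4.1]{Pa} --- is precisely that for \emph{every} $A\in C^{(k)}$ there exists one of the finitely many special bundles $L_i=K_C-L_i'$ of degree $3(k-1)$ with $H^0(C,L_i)\to H^0(C,L_i|_A)$ surjective, despite $h^1(L_i(-A))\geq 2$. You cite \cite[Proposition 3.6, Theorem 4.1]{Pa} only through the nefness of $\mc G_{k,L_i}$ on $C^{(k)}$, which is too weak to close your remaining case.

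With Pacienza's statement (A) in hand, the paper's proof needs no case analysis at all, and your three-way split (meeting $U$, support of degree $\leq k-2$, support of degree $k-1$) is superfluous: given any $f\colon D\to\mc Q$, fix $q\in f(D)$, let $A\in C^{(k)}$ be the degree-$k$ divisor $\Phi(q)$ --- which contains the schematic support of $\mc B_q$ whether or not $q\in U$ --- choose $L_i$ with surjective restriction to $A$, and apply Lemma \ref{B_L.D geq 0} to get $[B_{L_i,\mc Q}]\cdot[D]\geq 0$. Since by Proposition \ref{difference-B_L} the class $[B_{L,\mc Q}]=[\mc O_{\mc Q}(1)]+\deg(L)[x]$ depends only on $\deg(L)$, this gives nefness of $[\mc O_{\mc Q}(1)]+3(k-1)[x]$ against every curve, and your reduction to $\deg(L)=3(k-1)$ via nefness of $[x]$, as well as your final ``nef but not ample'' argument via $\eta^*B_{L,\mc Q}=[\mc O(-\Delta_k/2)]+3(k-1)[x]=[\mc G_{k,L}]$, are both correct and agree with the paper. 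In short: your frame and endgame are right, but the one case you could not close is closed in the paper by the stronger form of Pacienza's theorem, which your cohomological dismissal of Lemma \ref{B_L.D geq 0} incorrectly ruled out.
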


\noindent 
We remark that this is an improvement
since Proposition \ref{nef-B_L} only shows that $B_{L,\mc Q}$
is nef when ${\rm deg}(L)\geq 3k-1$. 

\begin{proof}
	It follows from Proposition \ref{difference-B_L} that the class
	of $B_{L,\mc Q}$ in $N^1(\mc Q)$ is $[\mc O_{\mc Q}(1)]+{\rm deg}(L)[x]$,
	since $B_{\mc O_C,\mc Q}=\mc O_{\mc Q}(1)$. Notice that this class only 
	depends on the degree of $L$. Since the sum of nef line
	bundles is nef, it suffices to show that $[B_{L,\mc Q}]=[\mc O_{\mc Q}(1)]+{\rm deg}(L)[x]$ 
	is nef when ${\rm deg}(L)=3(k-1)$.
	
	The set $V(\sigma_{L_i})$ is defined in equation
	\cite[equation (18)]{Pa}. Then (A) in \cite[Theorem 4.1]{Pa}
	says that for every $A\in C^{(k)}$ there is an $L_i$ 
	such that $H^0(C,L_i)\to H^0(C,L_i\vert_A)$ is surjective. 
	
	Let $f:D\to \mc Q$ be morphism, where $D$ is a smooth 
	projective curve. Fix a point $q\in f(D)$.
	Let $A$ be the divisor corresponding to
	$\Phi(q)$, then $A$ is an effective divisor of degree $k$. 
	For this $A$, choose a line bundle $L_i$ such that 
	$$H^0(C,L_i)\to H^0(C,L_i\vert_A)$$ 
	is surjective. The scheme theoretic support 
	of $\mc B_q$ is contained in $A$. It follows from Lemma \ref{B_L.D geq 0}
	that 
	$$f^*B_{L_i,\mc Q}=f^*([\mc O_{\mc Q}(1)]+3(k-1)[x])\geq 0\,.$$	
	It follows that $B_{L,\mc Q}$ is nef.
	
	Note that 
	\begin{align*}
	\eta^*B_{L,\mc Q}&=\eta^*[\mc O_{\mc Q}(1)]+{\rm deg}(L)\eta^*[x]\\
	&=[\mc O(-\Delta_k/2)]+3(k-1)[x]\\
	&=[\mc G_{k,L}]\,.
	\end{align*}
	Thus, when $t\in [0,1]$ the pullback along $\eta$ 
	of $t[B_{L,\mc Q}]+(1-t)[\mc G_{k,L}]$ is $[\mc G_{k,L}]$, which 
	is not ample.
\end{proof}

\section{The genus 0 case}\label{genus-0}

Throughout this section we will work with $C=\mb P^1$.
Let us first compute the nef cone of $\mc Q(n,d)$.

Note that we have $C^{(d)}\cong \mb P^d$. 
Hence $N^1(C^{(d)})=\mb R[\mc O_{\mb P^d}(1)]$. 
By Corollary \ref{neron-severi of quot scheme} it follows
that $N^1(\mc Q)$ is two dimensional. Hence, it suffices
to find a line bundle on $\mc Q$ which is different from 
the pullback of $\mc O_{\mb P^d}(1)$ and which is nef but 
not ample. 
The following result is proved in 
\cite[Theorem 6.2]{Str}, but we include it for the 
benefit of the reader.

\begin{proposition}
\begin{align*}    	
	{\rm Nef}(\mc Q(n,d)) = & \mb R_{\geq 0}[B_{\mc O(d-1),\mc Q}]+\mb R_{\geq 0}[\mc O_{\mb P^d}(1)]                                                \\
	                      = & \mb R_{\geq 0}([\mc O_{\mc Q}(1)]+(d-1)[\mc O_{\mb P^d}(1)])+\mb R_{\geq 0}[\mc O_{\mb P^d}(1)]\,.
\end{align*}	
\end{proposition}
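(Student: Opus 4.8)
The plan is to exploit the fact that $N^1(\mc Q)$ is two-dimensional (by Corollary \ref{neron-severi of quot scheme}, since $C^{(d)}\cong \mb P^d$ has Picard rank one), so that computing the nef cone amounts to pinning down its two boundary rays. One ray is easy: the pullback $[\mc O_{\mb P^d}(1)]=\Phi^*[\mc O_{\mb P^d}(1)]$ is nef (pullback of an ample class under a morphism) but not ample, since $\Phi$ contracts the fibers $\Phi^{-1}(\underline c)\cong \prod_i \mb P(E_{c_i})$; concretely it meets the fiber line $[l]$ in zero. So $\mb R_{\geq 0}[\mc O_{\mb P^d}(1)]$ is one edge, and the whole problem reduces to identifying the \emph{other} extremal ray.

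My candidate for the second edge is $[B_{\mc O(d-1),\mc Q}]$, whose class by Corollary \ref{compare-B_L-O(1)} is $[\mc O_{\mc Q}(1)]+(d-1)[\mc O_{\mb P^d}(1)]$ (here $[x]=[\mc O_{\mb P^d}(1)]$ on $C^{(d)}=\mb P^d$). First I would show this class is \emph{nef}. On $C=\mb P^1$ every effective divisor $A$ of degree $d'\leq d$ is contained in some $A'$ of degree exactly $d-1$ when $d'<d$, and for $L=\mc O(d-1)$ one has $H^1(L(-A))=0$ for any $A$ of degree $\leq d-1$ since $\deg(L(-A))\geq 0$ on $\mb P^1$; hence $H^0(L)\to H^0(L|_A)$ is surjective for every length-$d$ subscheme. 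Invoking Lemma \ref{B_L.D geq 0} with this fixed $L$ at a chosen point $q\in f(D)$ (whose support has degree $\leq d$) shows $[B_{\mc O(d-1),\mc Q}]\cdot[D]\geq 0$ for every curve $D\to\mc Q$, so the class is nef.

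Next I would show this nef class is \emph{not ample}, which forces it to lie on the boundary and hence to be the second extremal ray. The clean way is to pull back along the section $\eta:C^{(d)}\to\mc Q$ of Proposition \ref{difference-B_L}: by Lemma \ref{restriction of O(1) to symmetric product} and G\"ottsche's theorem, $\eta^*B_{\mc O(d-1),\mc Q}=\mc G_{d,\mc O(d-1)\otimes M}=(\cdots)^{\boxtimes d}\otimes\mc O(-\Delta_d/2)$, and on $C^{(d)}=\mb P^d$ this is a genuine line bundle whose degree I can compute on a line in $\mb P^d$; I expect it to pair to zero against a suitable curve (the class restricts to something on the boundary of $\mrm{Nef}(\mb P^d)$ only if the coefficient works out, so I must check the degree is exactly $0$, making $\eta^*$ of the class non-ample and therefore the class itself non-ample). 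Alternatively, and more robustly, I can produce an explicit curve on which the class has intersection zero, e.g. the curve $\tilde\delta:C\to\mc Q$ (or $\eta_*[l']$-type curve) whose image avoids being moved by the linear system; since the second generator of $N^1$ must be dual to some effective curve, exhibiting one curve annihilated by $[B_{\mc O(d-1),\mc Q}]$ suffices.

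The main obstacle is the non-ampleness/extremality step: nefness is routine on $\mb P^1$ because higher cohomology vanishes freely, but to conclude $[B_{\mc O(d-1),\mc Q}]$ is exactly the extremal ray I must rule out that the true boundary lies strictly between $[\mc O_{\mb P^d}(1)]$ and $[B_{\mc O(d-1),\mc Q}]$, i.e. I must exhibit an effective curve class pairing to zero with $[B_{\mc O(d-1),\mc Q}]$. Concretely I would compute $[B_{\mc O(d-1),\mc Q}]\cdot[C']=0$ for an explicit $C'$ (most likely the image of $\tilde\delta$ or a rational curve in a fiber-type locus), using Lemma \ref{pullback-lemma} to evaluate $[\mc O_{\mc Q}(1)]\cdot[C']$ as a pushforward degree and $[\mc O_{\mb P^d}(1)]\cdot[C']$ via $\Phi_*$. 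Once both extremal rays $\mb R_{\geq 0}[B_{\mc O(d-1),\mc Q}]$ and $\mb R_{\geq 0}[\mc O_{\mb P^d}(1)]$ are shown to be nef-but-not-ample, the two-dimensionality of $N^1(\mc Q)$ closes the argument: the nef cone, being a closed two-dimensional cone with these as its boundary rays, is exactly their positive span, giving both displayed equalities after substituting the class computation from Corollary \ref{compare-B_L-O(1)}.
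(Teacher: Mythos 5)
Your main line is correct and is essentially the paper's own proof: reduce to the two\-/dimensionality of $N^1(\mc Q)$, prove $B_{\mc O(d-1),\mc Q}$ is nef via Lemma \ref{B_L.D geq 0}, and prove it is not ample by pulling back along the section $\eta$ of \eqref{section-HC}. Two small notes on your nefness step: the relevant support $A=\Phi(q)$ has degree $d$, not $\leq d-1$, so $\deg(\mc O(d-1)(-A))=-1$; the surjectivity of $H^0(\mc O(d-1))\to H^0(\mc O(d-1)|_A)$ still holds because $H^1(\mb P^1,\mc O(-1))=0$ (this is exactly Lemma \ref{line bundles restricted to a divisor} with $g=0$). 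For non-ampleness, the paper shows $\eta^*B_{\mc O(d-1),\mc Q}$ is actually trivial, by tensoring the Koszul sequence $0\to p_1^*\mc O_{\mb P^1}(-d)\otimes p_2^*\mc O_{\mb PW^*}(-1)\to \mc O\to \mc O_\Sigma\to 0$ with $p_1^*\mc O_{\mb P^1}(d-1)$ and pushing forward; your route through G\"ottsche's formula gives the same numerically, and the degree check you deferred does come out to $0$: $[\eta^*B_{\mc O(d-1),\mc Q}]=(d-1)[x]-[\Delta_d/2]$, with $[x]=[\mc O_{\mb P^d}(1)]$ and $\Delta_d\subset\mb P^d$ the discriminant hypersurface of degree $2(d-1)$.

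The genuine warning concerns your ``more robust'' fallback, which fails as stated. Neither $\t\delta$ nor a curve in a fiber of $\Phi$ is annihilated by $[B_{\mc O(d-1),\mc Q}]=[\mc O_{\mc Q}(1)]+(d-1)[\mc O_{\mb P^d}(1)]$: the fiber line $[l]$ pairs to $1$, and for $\t\delta$ (which moreover requires $n\geq d$ to exist) one has $[\mc O_{\mc Q}(1)]\cdot[\t\delta]=0$ but $\Phi_*\t\delta=\delta$ is the rational normal curve, so $\delta^*[x]$ has degree $d$ and $[B_{\mc O(d-1),\mc Q}]\cdot[\t\delta]=d(d-1)>0$ for $d\geq 2$. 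The effective curve dual to the second extremal ray is $\eta_*$ of a line in $C^{(d)}\cong\mb P^d$ (e.g.\ the class $\eta_*[l']$ of Definition \ref{p1-in-C^(d)}, noting ${\rm gon}(\mb P^1)=1$), and it is annihilated precisely \emph{because} $\eta^*B_{\mc O(d-1),\mc Q}$ is trivial --- so the $\eta$-pullback computation is not merely the ``clean way'' but the necessary one. With that route completed, your concluding argument is sound: two independent nef-but-not-ample classes in a two-dimensional $N^1(\mc Q)$ must span the boundary rays, and Corollary \ref{compare-B_L-O(1)} together with $[x]=[\mc O_{\mb P^d}(1)]$ gives both displayed equalities.
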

\begin{proof}
	Let $W:=H^0(\mb P^1,\mc O_{\mb P^1}(d))$.
	There is a natural isomorphism $\mb PW^*\xrightarrow{\sim} C^{(d)}$.
	The universal sub-scheme $\Sigma\subset \mb P^1\times \mb PW^*$
	is given by the tautological section 
	$$p_2^*\mc O_{\mb PW^*}(-1)\to p_2^*W=p_1^*W\to p_1^*\mc O_{\mb P^1}(d)\,.$$
	
	By Lemma \ref{B_L.D geq 0} and Lemma \ref{line bundles restricted to a divisor} 
	we get that $B_{\mc O(d-1),\mc Q}$ is nef. To show $B_{\mc O(d-1),\mc Q}$ is not ample, 
	consider a section $\eta:C^{(d)}\to \mc Q$ constructed as in 
	(\ref{section-HC}) with $L$ the trivial bundle. 
	Let $p_i$ denote the two projections from $\mb P^1\times \mb PW^*$.
	By definition and Lemma \ref{lb-lemma} it follows that 
	$\eta^*B_{\mc O(d-1),\mc Q}={\rm det}(p_{2*}(\mc O_\Sigma\otimes p_1^*\mc O_{\mb P^1}(d-1)))$.
	Tensoring the exact sequence 
	$$0\to p_1^*\mc O_{\mb P^1}(-d)\otimes p_2^*\mc O_{\mb PW^*}(-1)\to \mc O_{\mb P^1\times \mb PW^*} \to \mc O_{\Sigma}\to 0$$
	with $p_1^*\mc O_{\mb P^1}(d-1)$ and applying $p_{2*}$ 
	it easily follows that $p_{2*}(\mc O_\Sigma\otimes p_1^*\mc O_{\mb P^1}(d-1))$
	is the trivial bundle and so $\eta^*B_{\mc O(d-1),\mc Q}$ is trivial. 
	This proves that $B_{\mc O(d-1),\mc Q}$ is nef but not ample. 
	
	By restricting to a fiber of $\Phi$ and using Corollary \ref{compare-B_L-O(1)}
	we see that $[B_{\mc O(d-1),\mc Q}]$ is linearly independent from
	$[\mc O_{\mb P^d}(1)]$. 
	This completes the proof of the first equality. The second equality will follow 
	from the first equality once we show that 
	$$[B_{\mc O(d-1),\mc Q}]=[\mc O_{\mc Q}(1)]+(d-1)[\mc O_{\mc P^d}(1)]\,.$$ 
	By Corollary \ref{compare-B_L-O(1)}, we have that 
	$[B_{\mc O(d-1),\mc Q}]=[\mc O_{\mc Q}(1)]+(d-1)[x]$. 
	Now recall that given $x\in \mb P^1$, 
	$[x]$ is the class of the divisor in $C^{(d)}$ whose underlying set consists of 
	effective divisors of degree $d$ containing $x$ (see (\ref{three divisors of 
	symmetric product of curves})). Hence,  $[x]$ is the class of the  
	hyperplane section 
	$$\mb P(H^0(\mb P^1,\mc O(d)\otimes \mc O(-x)))^*) \subset \mb 
	P(H^0(\mb P^1,\mc O(d))^*)=C^{(d)}\,.$$
	 Therefore $[x]=[\mc O_{\mb P^1}(1)]$ and this completes the proof of the second 
	 equality.
\end{proof}

\begin{theorem}\label{nef cone of quot schemes over projective line}
	Let $C=\mb P^1$. Let $E=\bigoplus\limits^k_{i=1} \mc O(a_i)$ with $a_i\leq a_j$ for $i<j$. 
	Let $d\geq 1$. Let $L=\mc O(-a_1+d-1)$.
	Then 
\begin{align*}	
	{\rm Nef}(\mc Q(E,d)) = &  \mb R_{\geq 0}[B_{L,\mc Q(E,d)}]+\mb R_{\geq 0}[\mc O_{\mb P^d}(1)]                                                 \\
                       	  = &  \mb R_{\geq 0}([\mc O_{\mc Q(E,d)}(1)]+(-a_1+d-1)[\mc O_{\mb P^d}(1)])+\mb R_{\geq 0}[\mc O_{\mb P^d}(1)]\,.
\end{align*}
\end{theorem}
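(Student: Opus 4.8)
The plan is to exploit that $N^1(\mc Q)$ is two-dimensional: since $C=\mb P^1$ gives $C^{(d)}\cong \mb P^d$, Corollary \ref{neron-severi of quot scheme} yields $N^1(\mc Q)=\mb R\,\Phi^*[\mc O_{\mb P^d}(1)]\oplus \mb R[\mc O_{\mc Q}(1)]$. In Picard number two the nef cone is a full-dimensional closed convex cone bounded by exactly two rays, its interior being the ample cone; hence its two extremal rays are precisely the nef classes that fail to be ample. So it suffices to produce two linearly independent classes that are each nef but not ample, as these are forced to span the two boundary rays. My two candidates are $\Phi^*[\mc O_{\mb P^d}(1)]$ and $[B_{L,\mc Q}]$ with $L=\mc O(-a_1+d-1)$. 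The first is immediate: being the pullback of an ample class it is nef, and since $\Phi$ contracts the fibre-line $[l]$ of Definition \ref{line-fiber-hc}, so that $\Phi^*[\mc O_{\mb P^d}(1)]\cdot [l]=0$, it is not ample.

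Next I would establish nefness of $B_{L,\mc Q}$. Lemma \ref{B_L.D geq 0} is stated for the trivial bundle, but its proof goes through verbatim for $E=\bigoplus_i\mc O(a_i)$ once $V\otimes\mc O_C$ is replaced by $E$: for a morphism $f:D\to\mc Q$ from a smooth projective curve and $q\in f(D)$ with scheme-theoretic support $A$ of $\mc B_q$ (so $\deg A\leq d$), the fibre map factors as $H^0(E\otimes L)\to H^0((E\otimes L)|_A)\to H^0(\mc B_q\otimes L)$ with surjective second arrow, and surjectivity of the first arrow produces, via a $d$-th exterior power, a global section of $B_{L,\mc Q}$ non-vanishing at $q$, whence $[B_{L,\mc Q}]\cdot [D]\geq 0$. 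The crux is therefore the surjectivity of $H^0(E\otimes L)\to H^0((E\otimes L)|_A)$ for every effective $A$ with $\deg A\leq d$. Here $E\otimes L=\bigoplus_i\mc O(a_i-a_1+d-1)$, and because $a_1$ is the minimal twist every summand has degree $\geq d-1\geq \deg A-1$; on $\mb P^1$ this is exactly the vanishing $H^1(\mc O(a_i-a_1+d-1)\otimes\mc O(-A))=0$ needed for each summand. This is the step where the precise value $\deg L=-a_1+d-1$ is indispensable, and it is the main point requiring care.

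Finally I would show $B_{L,\mc Q}$ is not ample and that the two classes are independent. For non-ampleness, take the section $\eta:C^{(d)}\to\mc Q$ of equation \eqref{section-HC} defined by the projection $E\to M:=\mc O(a_1)$ onto a lowest-degree summand. Since $\eta$ is a closed immersion, it suffices to see that $\eta^*B_{L,\mc Q}$ is not ample on $\mb P^d$. By Lemma \ref{restriction of O(1) to symmetric product} and G\"ottsche's theorem this pullback is $\mc G_{d,L\otimes M}=(L\otimes M)^{\boxtimes d}\otimes \mc O(-\Delta_d/2)$, of class $\deg(L\otimes M)[x]-[\Delta_d/2]$; as $\deg(L\otimes M)=(-a_1+d-1)+a_1=d-1$ and $[\Delta_d/2]=(d-1)[x]$ on $C^{(d)}$ (which follows from the relation $[\theta_d]=(d-1)[x]-[\Delta_d/2]$ of Lemma \ref{E in terms of various bases} together with $\theta_d=0$ for $g=0$), this class is trivial, hence not ample, so $B_{L,\mc Q}$ is not ample. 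Independence of $[B_{L,\mc Q}]$ and $\Phi^*[\mc O_{\mb P^d}(1)]$ follows by intersecting with $[l]$: Corollary \ref{compare-B_L-O(1)} gives $[B_{L,\mc Q}]\cdot [l]=[\mc O_{\mc Q}(1)]\cdot[l]=1$ while $\Phi^*[\mc O_{\mb P^d}(1)]\cdot[l]=0$. By the two-dimensional reduction this proves the first displayed equality, and the second follows from Corollary \ref{compare-B_L-O(1)}, which identifies $[B_{L,\mc Q}]=[\mc O_{\mc Q}(1)]+(-a_1+d-1)[\mc O_{\mb P^d}(1)]$ using the identification $[x]=[\mc O_{\mb P^d}(1)]$ of the preceding proposition.
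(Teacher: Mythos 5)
Your proposal is correct, and its skeleton coincides with the paper's (reduce to the two-dimensional $N^1(\mc Q(E,d))$ via Corollary \ref{neron-severi of quot scheme}, exhibit two independent nef-but-not-ample classes, use the same two candidates and the same section $\eta_{\mc Q(E,d)}$ coming from the projection $E\to \mc O(a_1)$), but the two key steps are executed by a genuinely different mechanism. For nefness of $B_{L,\mc Q(E,d)}$ you re-run the proof of Lemma \ref{B_L.D geq 0} with $E$ in place of $V\otimes\mc O_C$, the crux being the vanishing $H^1\big(\mb P^1,\mc O(a_i-a_1+d-1)\otimes\mc O(-A)\big)=0$, valid since each twist has degree $a_i-a_1+d-1-\deg A\geq -1$ by minimality of $a_1$; this is a correct verbatim generalization. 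The paper never reopens that lemma: since $E(-a_1)$ is globally generated, a surjection $V\otimes\mc O_C\to E(-a_1)$ induces a morphism $f:\mc Q(E,d)\to\mc Q(n,d)$ with $f^*B_{\mc O(d-1),\mc Q(n,d)}=B_{L,\mc Q(E,d)}$ by Lemma \ref{lb-lemma}, so nefness is simply pulled back from the trivial-bundle proposition proved immediately before. Likewise for non-ampleness: the paper observes $f\circ\eta_{\mc Q(E,d)}$ is the trivial-bundle section and reuses the Koszul-resolution computation showing $\eta^*B_{\mc O(d-1),\mc Q(n,d)}$ is the \emph{trivial bundle}, whereas you compute the numerical class of $\mc G_{d,L\otimes M}$ and show it vanishes using $[\theta_d]=(d+g-1)[x]-[\Delta_d/2]$ specialized to $g=0$. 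That identity is true at $g=0$ (on $\mb P^d$ the discriminant $\Delta_d$ has degree $2(d-1)$, so $[\Delta_d/2]=(d-1)[x]$ and $\theta_d=0$ as $J(C)$ is a point), but be aware that Lemma \ref{E in terms of various bases} is stated only for $g\geq 1$, so you should either invoke the classical discriminant-degree fact directly or fall back on the paper's explicit pushforward computation. In sum: the paper's functorial reduction buys economy — all dependence on $E$ is concentrated in the single twist by $-a_1$ and the already-settled trivial-bundle case — while your route is more hands-on and has the virtue of making explicit exactly where the hypotheses $a_i\geq a_1$ and $\deg L=-a_1+d-1$ enter.
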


\begin{proof}
	By Corollary \ref{neron-severi of quot scheme} we get that  
	$N^1(\mc Q(E,d))$ is $2$-dimensional. 
	Hence it is enough to give two line bundles which are nef but not ample. 
	Clearly $\Phi^*_{\mc Q(E,d)}\mc O_{\mb P^d}(1)$ is nef but not ample.
	So it is enough to show 
	that $B_{L,\mc Q(E,d)}$ is nef but not ample.
	
	Since $a_j-a_1\geq 0~\forall~j\geq 1$, we get that $E(-a_1)$ is globally generated. Let 
	$V:=H^0(C,E(-a_1))$ and let ${\rm dim}~V=n$. Then we have a surjection 
	$V\otimes \mc O_C \to E(-a_1)$. Then gives us a surjection
	$$V\otimes \mc O_C\to p^*_CE(-a_1)\to \mc B_{\mc Q(E,d)}\otimes p^*_C\mc O_C(-a_1) \to 0\,.$$
	This defines a map $f:\mc Q(E,d)\to \mc Q(n,d)$. By Lemma \ref{lb-lemma} we get that 
	$$f^*B_{\mc O(d-1),\mc Q(n,d)}=B_{L,\mc Q(E,d)}={\rm det}(p_{\mc Q(E,d)*}(\mc B_{\mc Q(E,d)}\otimes p_C^*L))\,.$$ 
	Since $B_{\mc O(d-1),\mc Q(n,d)}$ is nef we get that $B_{L,\mc Q(E,d)}$ is nef.
	We next show that the $B_{L,\mc Q(E,d)}$ is not ample. 
	Consider the section $\eta_{\mc Q(E,d)}$ of 
	$\Phi_{\mc Q(E,d)}:\mc Q(E,d)\to C^{(d)}$ defined by the quotient 
	$p_C^*E\to p_C^*\mc O(a_1)\otimes \mc O_{\Sigma}$ on $C\times C^{(d)}$
	(see (\ref{section-HC})).
	Then $f\circ \eta_{\mc Q(E,d)}$ is a section of $\Phi:\mc Q(n,d)\to C^{(d)}$ 
	defined by a quotient $\mc O^n_C\to \mc O_\Sigma\to 0$ on $C\times C^{(d)}$. Therefore 
	$\eta_{\mc Q(E,d)}^*B_{L,\mc Q(E,d)}=\eta^*B_{\mc O(d-1),\mc Q(n,d)}$.
	As $\eta^*B_{\mc O(d-1),\mc Q(n,d)}$ is not ample,  
	we get that $B_{L,\mc Q(E,d)}$ is not ample.
	The second equality follows again from the fact that $[x]=[\mc O_{\mb P^d}(1)]$.
\end{proof}

\section{Some cases of equality}
Now we are back to the assumption that 
the genus of the curve satisfies $g(C)\geq 1$
and if $g(C)\geq 2$ then we also assume that $C$ is very general. 
\begin{definition}\label{def-U'}
Let $U'\subset \mc Q$ be the open set consisting of 
quotients $\mc O^n_C\to B\to 0$ such that the induced 
map $H^0(C,\mc O^n_C)\to H^0(C,B)$ is surjective.
\end{definition}
\begin{lemma}\label{the open set V}
	Consider the Quot scheme $\mc Q=\mc Q(n,d)$.
	Let $D$ be a smooth projective curve and let 
	$D\to \mc Q$ be a morphism such that its image intersects $U'$. Then 
	$[\mc O_{\mc Q}(1)]\cdot [D]\geq 0$.
\end{lemma}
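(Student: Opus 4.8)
The plan is to reduce the intersection number to a degree computation on $D$ and then exploit the surjectivity built into $U'$. Writing $f\colon D\to\mc Q$ for the morphism and $\mc B_D=(\mathrm{id}\times f)^*\mc B_{\mc Q}$ for the pulled-back universal quotient on $C\times D$, I would first observe, exactly as in the proofs of Lemmas \ref{the open set U} and \ref{B_L.D geq 0}, that Lemma \ref{pullback-lemma} gives $f^*p_{\mc Q*}(\mc B_{\mc Q})\cong p_{D*}(\mc B_D)$, so that by the definition of $\mc O_{\mc Q}(1)$ and the projection formula $[\mc O_{\mc Q}(1)]\cdot[D]=\deg_D\bigl(\det p_{D*}(\mc B_D)\bigr)=\deg_D\bigl(p_{D*}(\mc B_D)\bigr)$. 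Thus it suffices to show that the rank-$d$ bundle $W:=p_{D*}(\mc B_D)$ has nonnegative degree.

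Next I would push the universal quotient $V\otimes\mc O_{C\times D}\to\mc B_D$ forward along $p_D\colon C\times D\to D$. Since $C$ is projective and connected, $p_{D*}(V\otimes\mc O_{C\times D})=V\otimes\mc O_D$, so this yields a map of sheaves $\varphi\colon V\otimes\mc O_D\to W$ on $D$. The key point is to identify its fibre map: by Lemma \ref{pullback-lemma} the fibre of $W$ at $t\in D$ is $H^0(C,B_t)$, where $\mc O^n_C\to B_t$ is the quotient parametrised by $f(t)$, and the fibre of $\varphi$ at $t$ is precisely the natural map $H^0(C,\mc O^n_C)\to H^0(C,B_t)$. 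By hypothesis $f(D)$ meets $U'$, so choosing $t$ with $f(t)\in U'$ makes this fibre map surjective (Definition \ref{def-U'}). Hence $\mathrm{coker}(\varphi)\otimes k(t)=0$, so $\mathrm{coker}(\varphi)$ is supported on a proper closed, hence finite, subset of the curve $D$; that is, it is a torsion sheaf.

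Finally I would conclude by a degree estimate. Because $\mathrm{coker}(\varphi)$ is torsion, the image $\mathrm{im}(\varphi)\subset W$ is a full-rank subsheaf, and $\deg W=\deg\mathrm{im}(\varphi)+\mathrm{length}(\mathrm{coker}\,\varphi)\geq\deg\mathrm{im}(\varphi)$. On the other hand $\mathrm{im}(\varphi)$ is a quotient of the trivial bundle $V\otimes\mc O_D$ and is torsion-free, being a subsheaf of the locally free $W$; hence it is locally free and globally generated on the smooth curve $D$, so its determinant is a globally generated line bundle of nonnegative degree and $\deg\mathrm{im}(\varphi)\geq 0$. Combining these gives $\deg W\geq 0$, as required. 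I expect the only real subtlety to be the base-change identification of the fibre of $\varphi$ with the map $H^0(\mc O^n_C)\to H^0(B_t)$ and the ensuing passage from surjectivity at one point to torsionness of the cokernel; the degree estimate itself is routine.
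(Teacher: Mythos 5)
Your proof is correct and is essentially the paper's own argument: the paper likewise pushes the universal quotient forward along $p_D$ to obtain $\mc O^n_D\to (p_D)_*\mc B_D$, notes that it is generically surjective by the definition of $U'$ together with Lemma \ref{pullback-lemma}, and concludes $[\mc O_{\mc Q}(1)]\cdot[D]={\rm deg}((p_D)_*\mc B_D)\geq 0$. You simply make explicit the routine final step (torsion cokernel, globally generated full-rank image) that the paper leaves implicit.
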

\begin{proof}
	We continue with the notations of Lemma \ref{the open set U}. 
	Let $p_D:C\times D\to D$ be the projection. Then applying 
	$(p_D)_*$ to the quotient $\mc O^n_{C\times \mc Q}\to \mc B_D$ we get that the morphism
	$$(p_D)_*\mc O^n_{C\times D}=\mc O^n_D\to (p_D)_*\mc B_D$$
	is generically surjective by our assumption and Lemma \ref{pullback-lemma}. Hence we get that 
	$$[\mc O_{\mc Q}(1)]\cdot [D]={\rm deg}((p_D)_*\mc B_D)\geq 0\,.$$ 
\end{proof}

One extremal ray in ${\rm Nef}(C^{(2)})$ is given by $L_0$.
Let other extremal ray of ${\rm Nef}(C^{(2)})$ be given by 
\begin{equation}\label{def-alpha_t}
	\alpha_t=(t+1)x-\Delta_2/2\,,
\end{equation} 
(see \cite[page 75]{Laz}). Then using Lemma \ref{E in terms of various bases}, we get that
\begin{equation}\label{relation between L_0, Delta, alpha}
	\Delta_2/2=\dfrac{t+1}{g+t}L_0-\dfrac{g-1}{g+t}\alpha_t\,.
\end{equation}

\begin{theorem}\label{cone-d=2}
	Let $d=2$. Consider the Quot scheme $\mc Q=\mc Q(n,2)$. 
	Then $${\rm Nef}(\mc Q)=\mb R_{\geq 0}([\mc O_{\mc Q}(1)]+\dfrac{t+1}{g+t}[L_0])+
		\mb R_{\geq 0}[L_0]+\mb R_{\geq 0}[\alpha_t]\,.$$ 
\end{theorem}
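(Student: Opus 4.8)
We want to compute the full nef cone of $\mc Q = \mc Q(n,2)$ when $g \geq 2$, $C$ very general (the $g=1$ case being separate). Since $N^1(\mc Q)$ is three-dimensional by Proposition \ref{dim-N^1}, the strategy is to identify three extremal rays and show the claimed cone is exactly the nef cone, sandwiching it between a lower bound (the three classes are nef) and an upper bound (there are three effective curves whose dual cone matches). The three generators proposed are $\kappa := [\mc O_{\mc Q}(1)]+\tfrac{t+1}{g+t}[L_0]$, the class $[L_0]$, and $[\alpha_t]$.

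\textbf{Proving nefness (lower bound).} First I would verify that each generator is nef. The classes $[L_0]$ and $[\alpha_t]$ pull back from the two extremal rays of $\mathrm{Nef}(C^{(2)})$ (by definition of $\alpha_t$ in equation \eqref{def-alpha_t}), so their pullbacks under $\Phi$ are nef on $\mc Q$. For $\kappa$ I would specialize the general criterion: by Lemma \ref{the open set U}, any curve $D \to \mc Q$ whose image meets the open set $U$ satisfies $([\mc O_{\mc Q}(1)]+[\Delta_2/2])\cdot[D]\geq 0$, and one checks using relation \eqref{relation between L_0, Delta, alpha} that $\kappa$ is a nonnegative combination of $[\mc O_{\mc Q}(1)]+[\Delta_2/2]$, $[L_0]$, and $[\alpha_t]$. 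The remaining curves are those landing in $\mc Q\setminus U$; since $d=2$, the complement of $U$ consists of quotients supported at a single point (length-$2$ quotients with nonreduced support), and Lemma \ref{the complement of open set U} gives $([\mc O_{\mc Q}(1)]+g[x])\cdot[D]\geq 0$ there. Rewriting $[x]$ via Lemma \ref{E in terms of various bases} and comparing with $\kappa$ should show $\kappa\cdot[D]\geq 0$ in this case too, perhaps after adding nonnegative multiples of the nef classes $[L_0]$ and $[\theta_2]$.

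\textbf{The upper bound.} This is where the real content lies. I would produce three effective curves in $\mc Q$ whose classes span the cone dual to the claimed nef cone. The natural candidates are the fiber line $[l]$ (Definition \ref{line-fiber-hc}), and two curves pushed forward from $C^{(2)}$ via the section $\eta$: namely $\eta_*$ of the two extremal curve classes of $\overline{NE}(C^{(2)})$, which are dual to $L_0$ and $\alpha_t$. The pairing computations are routine via the projection formula and Lemma \ref{restriction of O(1) to symmetric product}: $[\mc O_{\mc Q}(1)]\cdot[l]=1$ pins down the $\mc O_{\mc Q}(1)$-direction, while the $\eta$-pushforwards pair to zero against $\mc O_{\mc Q}(1)$ and detect the $C^{(2)}$-directions. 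One then checks that the dual of the cone spanned by these three curves is exactly $\mb R_{\geq 0}\kappa + \mb R_{\geq 0}[L_0] + \mb R_{\geq 0}[\alpha_t]$.

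\textbf{Main obstacle.} The delicate point is confirming that the generator $\kappa$ lies on an extremal ray and is not merely nef but genuinely a boundary class — equivalently, exhibiting a curve $C_0$ with $\kappa\cdot[C_0]=0$ so that the upper and lower bounds actually meet along that ray. I expect this to require a careful choice of curve in $\mc Q\setminus U$ (the locus of length-two quotients at a single point, which fibers over the small diagonal $\delta\hookrightarrow C^{(2)}$), computing $[\mc O_{\mc Q}(1)]\cdot[C_0]$ explicitly and matching it against the coefficient $\tfrac{t+1}{g+t}$ coming from the nef cone of $C^{(2)}$. The specific value $\tfrac{t+1}{g+t}$ is forced by the geometry of $\mathrm{Nef}(C^{(2)})$ through relation \eqref{relation between L_0, Delta, alpha}, so the crux is showing that the bound from Lemma \ref{the open set U} is sharp precisely along this ray rather than leaving a gap between the lower and upper bounds.
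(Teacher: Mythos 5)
Your overall skeleton (three-dimensional $N^1(\mc Q)$, nefness of the three generators plus a matching upper bound) is the right one, and your treatment of curves meeting $U$ agrees with the paper. But both halves have a genuine gap. For the lower bound, your fallback for curves in $\mc Q\setminus U$ does not work: Lemma \ref{the complement of open set U} only gives $([\mc O_{\mc Q}(1)]+g[x])\cdot [D]\geq 0$, and by Lemma \ref{E in terms of various bases} one has $g[x]=\tfrac12[L_0]+\tfrac12[\theta_2]$, so $\kappa-([\mc O_{\mc Q}(1)]+g[x])=\bigl(\tfrac{t+1}{g+t}-\tfrac12\bigr)[L_0]-\tfrac12[\theta_2]$ has a strictly \emph{negative} $\theta_2$-coefficient. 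Since any curve $D$ avoiding $U$ maps into the small diagonal, where $\delta^*[L_0]=0$ but $\deg \delta^*[\theta_2]=4g>0$, adding nonnegative multiples of nef classes can never absorb this defect; your method would only recover the weaker class $\kappa_1$ of Proposition \ref{some line bundle is nef } (this is precisely why the general-$d$ statement carries the extra $\tfrac{d+g-2}{dg}[\theta_d]$ term). The missing idea, special to $d=2$, is the second open set $U'$ of Definition \ref{def-U'}: the only quotients outside $U$ are $\mc O_C^n\to k_c\oplus k_c$, which lie in $U'$, so $U\cup U'=\mc Q$, and Lemma \ref{the open set V} gives $[\mc O_{\mc Q}(1)]\cdot [D]\geq 0$ directly for curves inside $U'$.

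For the upper bound, your key pairing claim is false: the $\eta$-pushforwards do \emph{not} pair to zero against $\mc O_{\mc Q}(1)$, since $\eta^*\mc O_{\mc Q}(1)=[\mc O(-\Delta_2/2)]$ by Lemma \ref{restriction of O(1) to symmetric product}; for instance $[\mc O_{\mc Q}(1)]\cdot \eta_*[\delta]=2g-2\neq 0$. Worse, the Mori extremal ray of $\overline{NE}(C^{(2)})$ pairing to zero with $\alpha_t$ need not contain any effective curve at all (it cannot when $t$ is irrational, as conjecturally happens for $t=\sqrt g$), so the strategy of exhibiting three effective curves whose dual cone is the claimed cone is not available. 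The paper circumvents both problems: the face $\langle \kappa,[L_0]\rangle$ is killed by the curve $\t\delta$ of equation \eqref{t delta} (the constant quotient $\mc O_C^n\to k_c^2$, which uses $n\geq 2$ and satisfies $[\mc O_{\mc Q}(1)]\cdot[\t\delta]=[L_0]\cdot[\t\delta]=0$); the face $\langle \kappa,[\alpha_t]\rangle$ is shown non-ample not via a dual curve but by pulling back along $\eta$, since $\eta^*\kappa=\tfrac{g-1}{g+t}[\alpha_t]$ by \eqref{relation between L_0, Delta, alpha} is a boundary class of ${\rm Nef}(C^{(2)})$; and $[l]$ kills $\langle [L_0],[\alpha_t]\rangle$. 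Non-ampleness of every two-dimensional face then forces equality with the nef cone. Your ``main obstacle'' paragraph correctly senses that sharpness must come from a curve in the locus over the small diagonal --- $\t\delta$ is exactly that curve --- but without its construction, and with the erroneous vanishing $[\mc O_{\mc Q}(1)]\cdot \eta_*(\cdot)=0$, the proposed argument does not close.
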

\begin{proof}
	We first prove that $[\mc O_{\mc Q}(1)]+\dfrac{t+1}{g+t}[L_0]$ is nef.
	Since $d=2$, then there are only three types of quotients:
	\begin{enumerate}
		\item $\mc O^n_C\to \dfrac{\mc O_{C,c_1}}{\mf m_{C,c_1}}\oplus 
		\dfrac{\mc O_{C,c_2}}{\mf m_{C,c_2}}$ with $c_1\neq c_2$\,,
		\item $\mc O^n_C\to \dfrac{\mc O_{C,c_1}}{\mf m^2_{C,c_1}}$\,,
		\item $\mc O^n_C\to \dfrac{\mc O_{C,c}}{\mf m_{C,c}}\oplus \dfrac{\mc O_{C,c}}{\mf m_{C,c}}$\,.
	\end{enumerate}
	The first two quotients are in $U$ while the third one is in $U'$, that is,
	we get $U \cup U' =\mc Q$. Now let $D$ be a smooth projective curve and $D\to \mc Q$ 
	be a morphism. If its image interects $U$, then by Corollary \ref{cor-image-meets-U}, 
	$([\mc O_{\mc Q}(1)]+\Delta_2/2)\cdot [D]\geq 0\,.$
	Using (\ref{relation between L_0, Delta, alpha}) and the fact that $\alpha_t$ is  
	nef, we get that $([\mc O_{\mc Q}(1)]+\dfrac{t+1}{g+t}[L_0])\cdot [D]\geq 0$.
	If $D$ does not intersect $U$ then $D\subset U'$. Hence by Lemma \ref{the open set 
		V}, 
	we have 
	$$[\mc O_{\mc Q}(1)]\cdot [D]\geq 0\,.$$
	Since $[L_0]$ is nef we have that
	$$([\mc O_{\mc Q}(1)]+\dfrac{t+1}{g+t}[L_0])\cdot [D]\geq 0\,.$$
	Also $([\mc O_{\mc Q}(1)]+\dfrac{t+1}{g+t}[L_0])\cdot [\t \delta]=0$. Hence any 
	convex linear combination of $[\mc O_{\mc Q}(1)]+\dfrac{t+1}{g+t}[L_0]$ and 
	$[L_0]$ is nef but not ample. By (\ref{relation between L_0, Delta, 
		alpha}) $
	\eta^*([\mc O_{\mc Q}(1)]+\dfrac{t+1}{g+t}[L_0])=\dfrac{g-1}{g+t}\alpha_t$. 
	Hence any convex linear combination of $[\mc O_{\mc Q}(1)]+\dfrac{t+1}{g+t}[L_0]$ 
	and $[\alpha_t]$ is not ample. 
	Hence the result follows.
\end{proof}
Precise values for $t$ depending on $g$ are known when 
\begin{enumerate}
	\item When $g=1$, $t=1$.
	\item When $g=2$, $t=2$.
	\item When $g=3$, $t=9/5$.
	\item When $g$ is a perfect square $t=\sqrt{g}$, see \cite[Theorem 2]{Kouv-1993}.
	\item In \cite[Propn. 3.2]{Ciro-Kouv}, when 
	$g\geq 9$, assuming the Nagata conjecture, they prove that $t=\sqrt{g}$. 
\end{enumerate}
Thus, in all these cases using Theorem \ref{cone-d=2} we get 
the Nef cone of $\mc Q(n,2)$.\\

\subsection{\bf Criterion for nefness.}\label{subsection-criterion-for-nefness}
In the remainder of this section, we will need to work with
$C^{(d)}$ for different values of $d$. The line bundles $L_0$ 
on $C^{(d)}$ will therefore be denoted by $L_0^{(d)}$ when 
we want to emphasize the $d$. Similarly, we will denote 
$\mu_0^{(d)}=\dfrac{d+g-1}{dg}$.
Let $\mc P^{\leq n}_{(d)}$ be the set of all partitions 
$(d_1,d_2,\ldots,d_k)$ of $d$ of length at most $n$. 
Given an element $\mathbf{d}\in \mc P^{\leq n}_{(d)}$ define 
$$C^{(\mathbf{d})}:=C^{(d_1)}\times C^{(d_2)}\times \ldots \times C^{(d_k)}$$ 
and if $p_{i}:C^{(\mathbf{d})}\to C^{(d_i)}$ is the $i$-th projection
we define a class
$$[\mc O(-\Delta_{\mathbf{d}}/2)]:=[\sum p^*_{i}\mc O(-\Delta_{d_i}/2)] \in
		N^1(C^{(\mathbf{d})})\,.$$  
Note that we have a natural addition 
$$\pi_{\mathbf{d}}:C^{(\mathbf{d})}\to C^{(d)}\,.$$
For a partition $\mathbf{d}\in \mc P^{\leq n}_d$ define a morphism 
$$\eta_{\mathbf{d}}:C^{(\mathbf{d})}\to \mc Q$$
as follows.
For any $l\geq 1$, we define the universal 
subscheme of $C^{(l)}$ over $C\times C^{(l)}$ by $\Sigma_l$. 
Then over $C\times C^{(\mathbf{d})}$ we have the subschemes 
$(id\times p_{i})^*\Sigma_{d_i}$. We have a quotient 
$$q_{\mathbf{d}}:\mc O^n_{C\times C^{(d)}}\to \bigoplus\limits_i \mc O_{(id\times p_{i,\mathbf{d}})^*\Sigma_{d_i}}$$
defined by taking direct sum of morphisms 
$\mc O_{C\times C^{(d)}}\to \mc O_{(id\times p_{i,\mathbf{d}})^*\Sigma_{d_i}}$. 
Then $q_{\mathbf{d}}$ defines a map $C^{(\mathbf{d})}\to \mc Q$. 
By Lemma \ref{lb-lemma}, we have
\begin{equation}\label{pullback of O(1) via various sections}
	[\eta^*_{\mathbf{d}}\mc O_{\mc Q}(1)]=[\mc O(-\Delta_{\mathbf{d}}/2)]\,.
\end{equation}

\begin{lemma}\label{bound on O(1) in terms of diagonals}
	Let $D$ be a smooth projective curve.
	Let $D\to \mc Q$ be a morphism. Then there exists a 
	partition $\mathbf{d}\in \mc P^{\leq n}_{(d)}$ such that the composition $D\to \mc Q\to C^{(d)}$ factors as 
	$D\to C^{(\mathbf{d})}\to C^{(d)}$ and $[\mc O_{\mc Q}(1)]\cdot [D]\geq [\mc O(-\Delta_{\mathbf{d}}/2)]\cdot [D]$. 
\end{lemma}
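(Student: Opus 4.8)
The plan is to pull back the universal quotient along $f:D\to\mc Q$ and read the partition off the generic fibre. Set $\mc B_D=(id\times f)^*\mc B_{\mc Q}$ on $C\times D$; it is $D$-flat, supported in relative dimension zero with fibres of length $d$, and by Lemma \ref{lb-lemma} (taking $F=\mc O_C$, so that $B_{\mc O_C,\mc Q}=\mc O_{\mc Q}(1)$) one has $[\mc O_{\mc Q}(1)]\cdot[D]=\deg\det p_{D*}\mc B_D$. Restricting to the generic point of $D$ gives a torsion sheaf $\mc B_K$ on the smooth curve $C_K$, where $K=k(D)$, and $\mc B_K$ is a quotient of $\mc O_{C_K}^n$. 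I would invoke the elementary divisor decomposition $\mc B_K\cong\bigoplus_{i=1}^r\mc O_{B_i}$ with nested effective divisors $B_1\geq B_2\geq\cdots\geq B_r>0$. Since $\mc B_K$ is a quotient of $\mc O^n$, at every closed point of $C_K$ it needs at most $n$ generators, whence $r\leq n$; setting $\mathbf d=(\deg B_1,\ldots,\deg B_r)$ produces the required element of $\mc P^{\leq n}_{(d)}$.

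For the factoring, the tuple $(B_1,\ldots,B_r)$ is a $K$-point of $C^{(\mathbf d)}$ lying over the $K$-point of $C^{(d)}$ determined by $\Phi\circ f$. Because $C^{(\mathbf d)}$ is projective and $D$ is a smooth curve, this extends uniquely to a morphism $g:D\to C^{(\mathbf d)}$, and $\pi_{\mathbf d}\circ g=\Phi\circ f$ since the two agree at the generic point; this is the desired factorisation. Writing $g_i=p_i\circ g$ and $\Gamma_i=(id\times g_i)^*\Sigma_{d_i}$, the morphism $\eta_{\mathbf d}\circ g$ pulls the universal quotient back to $\bigoplus_i\mc O_{\Gamma_i}$, so Lemma \ref{lb-lemma} together with \eqref{pullback of O(1) via various sections} yields $[\mc O(-\Delta_{\mathbf d}/2)]\cdot[D]=\deg g^*\mc O(-\Delta_{\mathbf d}/2)=\sum_i\deg\det p_{D*}\mc O_{\Gamma_i}$.

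The numerical inequality is the heart of the argument. Since $D$ is a smooth curve, $D$-flatness is the same as $\mc O_D$-torsion-freeness, and I would use this to saturate the flag $\bigoplus_{i\leq j}\mc O_{B_i}\subset\mc B_K$ to a filtration $0=\mc B^0\subset\mc B^1\subset\cdots\subset\mc B^r=\mc B_D$ in which each quotient $\mc B_D/\mc B^j$ is torsion-free over $\mc O_D$; then every graded piece $\mc C_j:=\mc B^j/\mc B^{j-1}$ is $D$-flat with $\mc C_j|_{C_K}\cong\mc O_{B_j}$. As $R^1p_{D*}$ vanishes on sheaves supported in relative dimension zero, the exact sequences give $\det p_{D*}\mc B_D=\bigotimes_j\det p_{D*}\mc C_j$. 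Now a generic generator of $\mc C_j$ clears denominators to a global section, giving $\mc O_{C\times D}\to\mc C_j$ that is annihilated by $\mc I_{\Gamma_j}$ generically; torsion-freeness of $\mc C_j$ forces the annihilation to hold globally, so the map descends to $\theta_j:\mc O_{\Gamma_j}\to\mc C_j$, an isomorphism over $C_K$. Torsion-freeness of $\mc O_{\Gamma_j}$ then makes $\theta_j$ injective with cokernel of finite length, so applying $p_{D*}$ gives $\deg\det p_{D*}\mc C_j\geq\deg\det p_{D*}\mc O_{\Gamma_j}$. Summing over $j$ yields $[\mc O_{\mc Q}(1)]\cdot[D]\geq[\mc O(-\Delta_{\mathbf d}/2)]\cdot[D]$.

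I expect the factoring through $C^{(\mathbf d)}$ (valuative criterion) and the pushforward bookkeeping to be routine. The \textbf{main obstacle} is the construction of the filtration and the injection $\theta_j$: one must verify that saturating over the Dedekind-type base $D$ genuinely produces $D$-flat graded pieces restricting to the prescribed invariant factors, and that the generic generator extends to a global section annihilated by $\mc I_{\Gamma_j}$. The essential structural input throughout is that $\mc B_D$ is a quotient of $\mc O^n$, which is precisely what bounds the number of invariant factors by $n$ and places $\mathbf d$ in $\mc P^{\leq n}_{(d)}$.
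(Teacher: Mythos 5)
Your first two paragraphs are fine: the elementary-divisor decomposition of $\mc B_K$ at the generic point, the bound $r\leq n$ from the number of generators, and the extension of the resulting $K$-point of $C^{(\mathbf d)}$ over all of $D$ by properness are all correct. The genuine gap is in the third paragraph, at exactly the spot you flagged: the map $\theta_j:\mc O_{\Gamma_j}\to\mc C_j$ need not exist, and the per-graded-piece inequality $\deg\det p_{D*}\mc C_j\geq\deg\det p_{D*}\mc O_{\Gamma_j}$ is false in general. A map $\mc O_{C\times D}\to\mc C_j$ is the same as a global section of the vector bundle $p_{D*}\mc C_j$ on $D$, and you cannot always clear the denominators of a generic section: a rational function on the projective curve $D$ that kills the poles necessarily acquires poles elsewhere, so the best you get is a map $\mc O_{\Gamma_j}\otimes p_D^*\mc O_D(-A)\to\mc C_j$ for some effective $A$ on $D$, which degrades the inequality by $d_j\deg A$. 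Concretely: suppose the generic fibre is $k_c^{\oplus 2}$, so $\mathbf d=(1,1)$ and $\mc B_D$ is a globally generated rank-$2$ bundle $E$ on $\Gamma\cong D$; since $B_1=B_2$, the invariant-factor flag at the generic point is an arbitrary line in $E_K$. Taking $E=\mc O_D\oplus\mc O_D(p+q)$ on an elliptic curve and the line spanned by $(1,h)$ with $h$ having a pole of order $m$ at $x$, the saturation is $\mc O_D(-mx)$, a saturated line subbundle with locally free quotient; then $\mc C_1\cong\mc O_D(-mx)$ satisfies $\deg\det p_{D*}\mc C_1=-m<0=\deg\det p_{D*}\mc O_{\Gamma_1}$, and indeed $H^0(p_{D*}\mc C_1)=0$, so no nonzero $\theta_1$ exists at all. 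Only the \emph{sum} over $j$ is controlled (here $-m+(2+m)\geq 0$), and your filtration by saturations of an arbitrary elementary-divisor flag provides no mechanism for that: global generation of $\mc B_D$ does not pass to the intermediate subsheaves $\mc B^j$ or their graded pieces.

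This is precisely why the paper argues by induction on $d$ rather than splitting $\mc B_D$ all at once. There, a general global section of $\mc B_D$ itself is used --- such a section exists because $\mc B_D$ is a quotient of $\mc O^n_{C\times D}$, hence globally generated --- and its image is a cyclic subsheaf $\mc O_{\Gamma'}$, flat over $D$, split off at the \emph{bottom} of the filtration; the degree comparison is then the trivial additivity ${\rm deg}\,f_*\mc B_D={\rm deg}\,f_*\mc O_{\Gamma'}+{\rm deg}\,f_*\mc F\geq{\rm deg}\,f_*\mc O_{\Gamma'}+{\rm deg}\,f_*\mc F'$, with no section needed on the quotient side. Crucially, by the diagram \eqref{diagram-inductive-step}, the cokernel mod its zero-dimensional torsion $\mc F'$ is a quotient of $\mc O^{n-1}_{C\times D}$, so global generation is preserved through the induction, and this is also what produces the length bound $\leq n$ on the partition. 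Note too that the paper's partition is built dynamically --- the divisor $\Gamma'$ is the support of the image of a general section, not necessarily the largest invariant factor of $\mc B_K$ --- whereas your $\mathbf d$ is pinned to the elementary divisors; to salvage your route you would have to choose the flag so that every graded piece receives a generically surjective map from $\mc O_{C\times D}$, which is essentially the paper's inductive construction in disguise.
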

\begin{proof}
	We will proceed by induction on $d$. When $d=1$ the statement is obvious.
	
	Let us denote the pullback of the universal quotient on $C\times \mc Q$ to $C\times D$ by $\mc B_D$ and let $f:C\times D\to D$ be the natural projection. 	
	Consider a section such that the composite $\mc O_{C\times D}\to \mc O_{C\times D}^n\to \mc B_D$
	is non-zero and let $\mc F$ denote the cokernel of the composite map.
	We have a commutative diagram
	\begin{equation}\label{diagram-inductive-step}
		\xymatrix{
			0\ar[r] & \mc O_{C\times D}\ar[r]\ar@{->>}[d] & \mc O_{C\times D}^n \ar[r]\ar@{->>}[d] & 
			\mc O_{C\times D}^{n-1}\ar[r]\ar@{->>}[d] & 0\\
			0\ar[r] & \mc O_{\Gamma'}\ar[r] & \mc B_D\ar[r] & \mc F\ar[r] & 0
		}
	\end{equation}
	Let $T_0(\mc F)\subset \mc F$ denote the maximal subsheaf of dimension $0$, see
	\cite[Definition 1.1.4]{HL}. Define $\mc F':=\mc F/T_0(\mc F)$. Now, either $\mc 
	F'=0$ or $\mc F'$
	is torsion free over $D$, and hence, flat over $D$. 
	In the first case, it follows that $D$ meets the open set $U$
	in Lemma \ref{the open set U}. Then we take $\mathbf{d}=(d)$ 
	and the statement follows from Lemma \ref{the open set U}. 
	So we assume $\mc F'$ is flat over $D$ and let $d'$ be 
	the degree of $\mc F'|_{C\times x}$, for $x\in D$. So 
	$0<d'<d$. By (\ref{diagram-inductive-step})
	we have
	$${\rm deg}~f_*\mc B_D= {\rm deg}~f_*\mc O_{\Gamma'}+{\rm deg}~f_*\mc F\,.$$
	Since $T_0(\mc F)$ is supported on finitely many points, we have 
	${\rm deg}~\mc F\geq {\rm deg}~\mc 
	F'$. In other words, we have 
	\begin{equation}\label{inequality required for the inductive step}
		{\rm deg}~f_*\mc B_D\geq {\rm deg}~f_*\mc O_{\Gamma'}+f_*\mc F'	 \,.	
	\end{equation}
	Now $\Gamma'$ defines a morphism $D\to C^{(d-d')}$ and note that
	$${\rm deg}~f_*\mc O_{\Gamma'}=[\mc O(-\Delta_{d-d'}/2)]\cdot [D]\,.$$
	The quotient $\mc O^{n-1}_{C
		\times D}\to \mc F' \to 0$ defines a map $D\to \mc Q(n-1,d')$. By induction 
	hypothesis, we get that there exists a partition 
	$\mathbf{d}'\in \mc P^{\leq n-1}_{d'}$ such  
	that the composition $D\to \mc Q(n-1,d')\to C^{(d')}$ factors as
	$D\to C^{(\mathbf{d'})}\to C^{(d')}$ and 
	$$[\mc O_{\mc Q(n-1,d')}(1)]\cdot [D]\geq [\mc O(-\Delta_{\mathbf{d'}}/2)]\cdot [D]\,.$$
	Since ${\rm deg}~f_*\mc F'=[\mc O_{\mc Q(n-1,d')}(1)]\cdot [D]$ we have that 
	${\rm deg}~f_*\mc F'\geq [\mc O(-\Delta_{\mathbf{d'}}/2)]\cdot [D]$.
	From (\ref{inequality required for the inductive step}) we get that 
	$$[\mc O_{\mc Q}(1)]\cdot [D]\geq [\mc O(-\Delta_{d-d'}/2)]\cdot D + [\mc O(-\Delta_{\mathbf{d'}}/2)]\cdot [D]\,.$$
	Now we define $\mathbf{d}:=(d-d',\mathbf{d'})$ and 
	the statement follows from the above inequality. 
\end{proof}

\begin{theorem}\label{criterion-for-nefness}
	Let $\beta\in N^1(C^{(d)})$.
	Then the class $[\mc O_{\mc Q}(1)]+\beta \in N^1(\mc Q)$ is nef iff the class 
	$[\mc O(-\Delta_{\mathbf{d}}/2)]+\pi^*_{\mathbf{d}}\beta\in N^1(C^{(\mathbf{d})})$ 
	is nef for all $\mathbf{d}\in \mc P^{\leq n}_d$.
\end{theorem}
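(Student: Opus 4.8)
The plan is to prove the two implications separately, treating the forward direction as a straightforward pullback and the reverse direction as the place where the real content, namely Lemma \ref{bound on O(1) in terms of diagonals}, enters. Throughout I use the convention from the introduction that for $\beta\in N^1(C^{(d)})$ the same symbol $\beta$ denotes $\Phi^*\beta\in N^1(\mc Q)$, and I record the identity $\Phi\circ \eta_{\mathbf{d}}=\pi_{\mathbf{d}}$, which is immediate from the construction of $\eta_{\mathbf{d}}$: a point $(A_1,\ldots,A_k)\in C^{(\mathbf{d})}$ is sent to a quotient supported on the divisor $A_1+\cdots+A_k=\pi_{\mathbf{d}}(A_1,\ldots,A_k)$.

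For the forward implication, suppose $[\mc O_{\mc Q}(1)]+\beta$ is nef on $\mc Q$. For each $\mathbf{d}\in \mc P^{\leq n}_d$ I would simply pull this class back along $\eta_{\mathbf{d}}:C^{(\mathbf{d})}\to \mc Q$. Using \eqref{pullback of O(1) via various sections} together with $\Phi\circ \eta_{\mathbf{d}}=\pi_{\mathbf{d}}$, one computes
$$\eta_{\mathbf{d}}^*\big([\mc O_{\mc Q}(1)]+\beta\big)=[\mc O(-\Delta_{\mathbf{d}}/2)]+\pi_{\mathbf{d}}^*\beta\,.$$
Since the pullback of a nef class along a morphism is nef, the right-hand side is nef on $C^{(\mathbf{d})}$, as required.

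For the reverse implication, assume $[\mc O(-\Delta_{\mathbf{d}}/2)]+\pi_{\mathbf{d}}^*\beta$ is nef on $C^{(\mathbf{d})}$ for every $\mathbf{d}$. To prove $[\mc O_{\mc Q}(1)]+\beta$ is nef it suffices, after passing to normalizations, to show $\big([\mc O_{\mc Q}(1)]+\beta\big)\cdot [D]\geq 0$ for every morphism $f:D\to \mc Q$ with $D$ a smooth projective curve. First I would apply Lemma \ref{bound on O(1) in terms of diagonals} to obtain a partition $\mathbf{d}$ and a factorization $D\xrightarrow{h}C^{(\mathbf{d})}\xrightarrow{\pi_{\mathbf{d}}}C^{(d)}$ of $\Phi\circ f$, together with the inequality $[\mc O_{\mc Q}(1)]\cdot[D]\geq [\mc O(-\Delta_{\mathbf{d}}/2)]\cdot h_*[D]$. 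Next, since $\Phi\circ f=\pi_{\mathbf{d}}\circ h$, the projection formula gives $\beta\cdot [D]=\Phi^*\beta\cdot[D]=\pi_{\mathbf{d}}^*\beta\cdot h_*[D]$. Adding the inequality and this equality yields
$$\big([\mc O_{\mc Q}(1)]+\beta\big)\cdot [D]\geq \big([\mc O(-\Delta_{\mathbf{d}}/2)]+\pi_{\mathbf{d}}^*\beta\big)\cdot h_*[D]\geq 0\,,$$
the final inequality being the hypothesis applied to the effective curve class $h_*[D]$ on $C^{(\mathbf{d})}$. This closes the argument.

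The forward direction is routine, and the reverse direction is also mechanical once Lemma \ref{bound on O(1) in terms of diagonals} is in hand; so the genuine difficulty is entirely concentrated in that lemma, whose proof runs by induction on $d$ by splitting off a section of the universal quotient and controlling the degree drop via the maximal zero-dimensional subsheaf of the cokernel. The one bookkeeping point I would double-check in the theorem itself is that the curve class $h_*[D]$ against which I test the diagonal class and the pulled-back $\beta$ is exactly the one produced by the factorization in the lemma, so that the inequality for $[\mc O_{\mc Q}(1)]\cdot[D]$ and the equality for $\beta\cdot[D]$ can be added term by term.
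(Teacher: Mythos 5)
Your proposal is correct and takes essentially the same approach as the paper: the forward direction is the pullback along $\eta_{\mathbf{d}}$ using \eqref{pullback of O(1) via various sections}, and the converse combines Lemma \ref{bound on O(1) in terms of diagonals} with the nefness hypothesis on $C^{(\mathbf{d})}$ via the projection formula. Your explicit bookkeeping with $h_*[D]$ and the identity $\Phi\circ\eta_{\mathbf{d}}=\pi_{\mathbf{d}}$ merely makes precise what the paper's proof leaves implicit.
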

\begin{proof}
	From (\ref{pullback of O(1) via various sections}) it is clear that if 
	$[\mc O_{\mc Q}(1)]+\beta$ is nef, then 
	$\eta^*_{\mathbf{d}}([\mc O_{\mc Q}(1)]+\beta)=[\mc O(-\Delta_{\mathbf{d}}/2)]+\pi^*_{\mathbf{d}}\beta$ is nef.
	
	For the converse, we assume 
	$[\mc O(-\Delta_{\mathbf{d}}/2)]+\pi^*_{\mathbf{d}}\beta$ is 
	nef for all $\mathbf{d}\in \mc P^{\leq n}_d$. Let $D$ be 
	a smooth projective curve and $D\to \mc Q$ be a morphism.
	By Lemma \ref{bound on O(1) in terms of diagonals} we have that there exists $\mathbf{d}\in \mc P^{\leq n}_d$ such that 
	$D\to C^{(d)}$ factors as $D\to C^{(\mathbf{d})}\to C^{(d)}$ and 
	$$[\mc O_{\mc Q}(1)]\cdot [D]\geq [\mc O(-\Delta_{\mathbf{d}}/2)]\cdot[D]\,.$$
	Now by assumption we have that 
	$$[\mc O(-\Delta_{\mathbf{d}}/2)]\cdot[D]\geq -\beta\cdot [D]\,.$$
	Therefore we get 
	$$[\mc O_{\mc Q}(1)]\cdot [D]\geq -\beta\cdot [D]\,.$$
	Hence we get that the class $[\mc O_{\mc Q}(1)]+\beta$ is nef.
\end{proof}

\begin{lemma} \label{compare various L_0}
	Suppose we are given a map $D\to C^{(\mathbf{d})} \xrightarrow{\pi_{\mathbf{d}}} C^{(d)}$.
	Then we have
	$$[L^{(d)}_0]\cdot [D]\geq \sum_{i}[L^{(d_i)}_0]\cdot [D]\,.$$ 
\end{lemma}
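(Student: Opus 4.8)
The plan is to recast the desired inequality as a statement about a single nef class on $C^{(\mathbf{d})}$. Writing $f:D\to C^{(\mathbf{d})}$ for the given map and $p_i:C^{(\mathbf{d})}\to C^{(d_i)}$ for the projections, the quantity $[L^{(d)}_0]\cdot[D]-\sum_i[L^{(d_i)}_0]\cdot[D]$ equals $\big(\pi_{\mathbf{d}}^*[L^{(d)}_0]-\sum_i p_i^*[L^{(d_i)}_0]\big)\cdot[D]$. So it suffices to show that the difference class $\pi_{\mathbf{d}}^*L^{(d)}_0-\sum_i p_i^*L^{(d_i)}_0$ is nef on $C^{(\mathbf{d})}$; then its degree on any curve mapping in, in particular on the image of $D$, is non-negative.

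To identify this class I would pull everything back to $C^d$. Fix a partition of $\{1,\dots,d\}$ into blocks $B_1,\dots,B_k$ with $|B_a|=d_a$, so that the product of the symmetric quotient maps gives a finite surjection $\rho:C^d\to C^{(\mathbf{d})}$ and the composite $\pi_{\mathbf{d}}\circ\rho$ is just the full quotient $\psi:C^d\to C^{(d)}$. For $i\neq j$ let $\theta_{ij}$ denote the pullback of $\Theta$ under the difference morphism $C^d\to J(C)$, $(x_1,\dots,x_d)\mapsto x_i-x_j$. By the definition of $L_0$ as the descent of $\phi^*(\otimes_{i<j}p^*_{ij}\Theta)$, the pullback $\psi^*L^{(d)}_0=\rho^*\pi_{\mathbf{d}}^*L^{(d)}_0$ is $\bigotimes_{i<j}\theta_{ij}$, the tensor product over all pairs, whereas $\rho^*\sum_i p_i^*L^{(d_i)}_0$ is $\bigotimes_{i<j\text{ in the same block}}\theta_{ij}$, the product over within-block pairs only (and $L^{(d_i)}_0$ is trivial whenever $d_i=1$, which is automatically consistent). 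Hence
\[
\rho^*\Big(\pi_{\mathbf{d}}^*L^{(d)}_0-\sum_i p_i^*L^{(d_i)}_0\Big)=\bigotimes_{\substack{i<j\\ i,j\text{ in distinct blocks}}}\theta_{ij}\,.
\]

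Since $\Theta$ is ample on $J(C)$, each $\theta_{ij}$ is nef, and so the cross-block tensor product on the right is nef. It remains to transfer this nefness from $C^d$ back to $C^{(\mathbf{d})}$ and to $D$. I would do this by lifting: form the fibre product $D\times_{C^{(\mathbf{d})}}C^d$ along $\rho$, choose an irreducible component dominating $D$, and normalize to obtain a finite map $\nu:\tilde D\to D$ of smooth projective curves together with a lift $\tilde f:\tilde D\to C^d$ satisfying $\rho\circ\tilde f=f\circ\nu$. Then $\deg\nu\cdot\big(\pi_{\mathbf{d}}^*L^{(d)}_0-\sum_i p_i^*L^{(d_i)}_0\big)\cdot[D]$ equals $\sum_{i<j\text{ cross-block}}\deg(\tilde f^*\theta_{ij})$, which is non-negative because each $\theta_{ij}$ is nef; dividing by $\deg\nu>0$ gives the claim. (Alternatively one may simply quote that nefness is detected after pullback along the finite surjective $\rho$.) The only real work is the bookkeeping in the pullback identity, namely keeping track of which difference factors $\theta_{ij}$ survive under each of the two quotient maps; this is the main, though routine, obstacle. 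Once the block decomposition of $C^d\to C^{(\mathbf{d})}$ is written down, the split of the index set into within-block and cross-block pairs is immediate, and the positivity input is just that a pullback of the ample class $\Theta$ is nef.
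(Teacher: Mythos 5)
Your proof is correct, and it is exactly the argument the paper intends: the paper's own ``proof'' merely says the lemma ``follows easily from the definition of $L_0^{(d)}$ and is left to the reader,'' and your computation---pulling back along $\rho:C^d\to C^{(\mathbf{d})}$, identifying the difference class with the tensor product of the cross-block classes $\theta_{ij}$, and transferring nefness to $D$ via a finite lift---is the standard way to fill in that omitted verification. Your handling of the $d_i=1$ case ($L_0^{(1)}$ trivial) also matches the paper's later use of $[L_0^{(1)}]=0$.
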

\begin{proof}
	By $[L_0^{d_i}]\cdot [D]$ we mean the degree of the pullback 
	of $[L_0^{(d_i)}]$ along $D\to C^{(\mathbf{d})}\xrightarrow{p_i} C^{(d_i)}$.
	The lemma follows easily from the definition of $L_0^{(d)}$ 
	and is left to the reader. 
	\end{proof}

\begin{proposition}\label{another lower bound for NEF cone}
	Let $n\geq 1$, $g\geq 1$ and $\mc Q=\mc Q(n,d)$.  
	Then the class $\kappa_2:=[\mc O_{\mc Q}(1)]+\dfrac{g+1}{2g}[L_0^{(d)}] \in N^1(\mc Q)$ is nef.
	As a consequence we get that 
	$${\rm Nef}(\mc Q)\supset \mb R_{\geq 0}\kappa_1+
		\mb R_{\geq 0}\kappa_2+\mb R_{\geq 0}[\theta_d]+\mb R_{\geq 0}[L_0^{(d)}]\,.$$

\end{proposition}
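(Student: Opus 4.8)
The statement to prove is that $\kappa_2 = [\mc O_{\mc Q}(1)]+\tfrac{g+1}{2g}[L_0^{(d)}]$ is nef, and then deduce the cone inclusion. The natural strategy is to invoke the nefness criterion of Theorem \ref{criterion-for-nefness} with $\beta = \tfrac{g+1}{2g}[L_0^{(d)}]$. By that theorem, it suffices to show that for every partition $\mathbf{d}=(d_1,\ldots,d_k)\in \mc P^{\leq n}_d$, the class
\[
	[\mc O(-\Delta_{\mathbf{d}}/2)]+\tfrac{g+1}{2g}\,\pi^*_{\mathbf{d}}[L_0^{(d)}] \in N^1(C^{(\mathbf{d})})
\]
is nef. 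First I would use Lemma \ref{compare various L_0}: testing against an arbitrary curve $D\to C^{(\mathbf{d})}$, one has $[L_0^{(d)}]\cdot[D] = \pi_{\mathbf{d}}^*[L_0^{(d)}]\cdot[D] \geq \sum_i [L_0^{(d_i)}]\cdot[D]$. Since $\tfrac{g+1}{2g}>0$, this reduces the problem to showing nefness of the class $[\mc O(-\Delta_{\mathbf{d}}/2)]+\tfrac{g+1}{2g}\sum_i p_i^*[L_0^{(d_i)}]$ on the product $C^{(\mathbf{d})}$.

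The key reduction is that $[\mc O(-\Delta_{\mathbf{d}}/2)] = \sum_i p_i^*[\mc O(-\Delta_{d_i}/2)]$ by definition, so the whole class splits as an exterior sum $\sum_i p_i^*\gamma_i$, where $\gamma_i := [\mc O(-\Delta_{d_i}/2)]+\tfrac{g+1}{2g}[L_0^{(d_i)}]\in N^1(C^{(d_i)})$. A sum of pullbacks of nef classes from the factors of a product is nef, so it suffices to prove each $\gamma_i$ is nef on $C^{(d_i)}$. This is the crux of the argument. I would rewrite $\gamma_i$ in terms of the basis $[x],[\theta_{d_i}]$ using the relation $[\Delta_{d_i}/2]=(d_i+g-1)[x]-[\theta_{d_i}]$ from Lemma \ref{E in terms of various bases}, and the expression $[L_0^{(d_i)}]=d_i g[x]-[\theta_{d_i}]$, so that
\[
	\gamma_i = [\theta_{d_i}] - (d_i+g-1)[x] + \tfrac{g+1}{2g}\big(d_i g[x]-[\theta_{d_i}]\big).
\]
Collecting coefficients, $\gamma_i = \big(1-\tfrac{g+1}{2g}\big)[\theta_{d_i}] + \big(\tfrac{(g+1)d_i}{2}-(d_i+g-1)\big)[x]$. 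The $[\theta_{d_i}]$-coefficient is $\tfrac{g-1}{2g}\geq 0$ and $[\theta_{d_i}]$ is nef, so it remains to check that the $[x]$-coefficient times $[x]$ is nef; since $[x]$ is nef on $C^{(d_i)}$, it suffices that this coefficient be nonnegative, i.e. $\tfrac{(g+1)d_i}{2}\geq d_i+g-1$, which for $d_i\geq 1$ and $g\geq 1$ one verifies directly.

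The main obstacle I anticipate is the boundary case analysis when some $d_i=1$, since then $C^{(1)}=C$, $[\theta_1]$ and $[x]$ may not be independent, and the diagonal term $[\Delta_1/2]$ vanishes; I would handle $d_i=1$ separately by noting $[L_0^{(1)}]=0$ and $\gamma_1$ reduces to a nonnegative multiple of a nef class, or degenerates to zero, which is still nef. Once nefness of $\kappa_2$ is established, the cone inclusion follows formally: $\kappa_1$ is nef by Proposition \ref{some line bundle is nef }, $\kappa_2$ is nef by the above, $[\theta_d]$ and $[L_0^{(d)}]$ are nef by Proposition \ref{nef-cone-C^(d)} and the discussion following Definition \ref{def-L_0}, and since ${\rm Nef}(\mc Q)$ is a convex cone it contains the cone generated by any collection of nef classes, giving the desired containment $\mb R_{\geq 0}\kappa_1+\mb R_{\geq 0}\kappa_2+\mb R_{\geq 0}[\theta_d]+\mb R_{\geq 0}[L_0^{(d)}]\subset {\rm Nef}(\mc Q)$.
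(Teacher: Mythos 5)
Your proposal is correct and follows essentially the paper's own route: the same reduction via Theorem \ref{criterion-for-nefness} with $\beta=\tfrac{g+1}{2g}[L_0^{(d)}]$, the same use of Lemma \ref{compare various L_0}, and the same splitting of the resulting class on $C^{(\mathbf{d})}$ into per-factor classes $\gamma_i$. The only real difference is cosmetic: the paper expands each $[\mc O(-\Delta_{d_i}/2)]$ in the basis $\{[\theta_{d_i}],[L_0^{(d_i)}]\}$ via Lemma \ref{E in terms of various bases}, obtaining $\gamma_i=(1-\mu_0^{(d_i)})[\theta_{d_i}]+(\mu_0^{(2)}-\mu_0^{(d_i)})[L_0^{(d_i)}]$ and using that $\mu_0^{(d)}$ is decreasing in $d$, whereas you expand in $\{[\theta_{d_i}],[x]\}$. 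One slip to correct in your write-up: the inequality $\tfrac{(g+1)d_i}{2}\geq d_i+g-1$ is equivalent to $(g-1)(d_i-2)\geq 0$, which fails precisely when $g\geq 2$ and $d_i=1$, so your blanket claim that it holds ``for $d_i\geq 1$ and $g\geq 1$'' is false. Your separate treatment of the boundary case rescues the argument: for $d_i=1$ one has $\Delta_1=0$ and $[L_0^{(1)}]=0$, so $\gamma_1=0$ (equivalently, on $C^{(1)}=C$ the relation $[\theta_1]=g[x]$ makes your two terms cancel exactly), and with that case carved out the coefficient check is only needed for $d_i\geq 2$, where it holds. With this caveat made explicit, the rest of your argument, including the formal deduction of the cone inclusion from nefness of $\kappa_1$, $\kappa_2$, $[\theta_d]$ and $[L_0^{(d)}]$, matches the paper and is correct.
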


\begin{proof}
	Recall $\mu_0^{(2)}=\dfrac{g+1}{2g}$.
	By Theorem \ref{criterion-for-nefness}
	it suffices to show that for all $\mathbf{d}\in \mc P^{\leq n}_{(d)}$
	we have $[\mc O(-\Delta_{\mathbf{d}}/2)]+\mu_0^{(2)}\pi^*_{\mathbf{d}}[L_0^{(d)}]$ is nef.
	Using Lemma \ref{E in terms of various bases}, 
	$[L^{(1)}_0]=0$ and Lemma \ref{compare various L_0} we get 
	\begin{align*}
		([\mc O(-\Delta_{\mathbf{d}}/2)]+\mu_0^{(2)}\pi^*_{\mathbf{d}}[L_0^{(d)}])\cdot [D]&=
			\Big(\sum_i(1-\mu_0^{(d_i)})[\theta_{d_i}]-\mu_0^{(d_i)}[L_0^{d_i}]\Big)\cdot [D] \\
			&\qquad \qquad +\mu_0^{(2)}[L_0^{(d)}]\cdot [D]\\
			&\geq \sum_i(\mu_0^{(2)} - \mu_0^{(d_i)})[L_0^{d_i}]\cdot [D]\,.
	\end{align*}
	This proves that $\kappa_2$ is nef.
	That $\kappa_1$ is nef is proved in Proposition \ref{some line bundle is nef }.
	This completes the proof of the theorem.	
\end{proof}

\begin{corollary}\label{O(1)+mu^2L_0-not-ample}
	Let $n\geq d$. Then the class $[\mc O_{\mc Q}(1)]+\mu^{(2)}_0[L^{(d)}_0]\in N^1(\mc Q)$ is nef but not ample.
\end{corollary}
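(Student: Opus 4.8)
The plan is to treat the two assertions separately. Nefness of $\kappa_2=[\mc O_{\mc Q}(1)]+\mu_0^{(2)}[L_0^{(d)}]$ (with $\mu_0^{(2)}=\frac{g+1}{2g}$) is already established in Proposition \ref{another lower bound for NEF cone}, so the only thing left to prove is non-ampleness. For this I would exhibit a single irreducible curve on $\mc Q$ against which $\kappa_2$ has intersection number zero; since $\mc Q$ is smooth projective, a nef class that pairs to zero with a genuine irreducible curve cannot be ample (ampleness forces strict positivity on every curve).

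The curve I would use is $\tilde\delta\colon C\to\mc Q$ from \eqref{t delta}. This is exactly the place where the hypothesis $n\geq d$ enters, since the construction of $\tilde\delta$ requires a surjection $k^n\to k^d$. Recall $\tilde\delta$ sends $c\mapsto[\mc O^n_C\to k^d_c]$, so it is injective on closed points (the support recovers $c$); hence $\tilde\delta(C)$ is a genuine irreducible curve and $\tilde\delta$ has degree one onto its image, and moreover $\Phi\circ\tilde\delta=\delta$, the small diagonal $C\hookrightarrow C^{(d)}$, $c\mapsto dc$. I would then compute the two relevant intersection numbers. First, $[\mc O_{\mc Q}(1)]\cdot[\tilde\delta]=\deg\tilde\delta^*\mc O_{\mc Q}(1)$; pulling the universal quotient back along $\tilde\delta$ yields $i_*\mc O_C^d$ supported on the diagonal of $C\times C$, so by Lemma \ref{pullback-lemma} the associated pushforward is the trivial rank-$d$ bundle and its determinant has degree $0$ (this is the computation already indicated in the proof of Proposition \ref{nef cone of quot schemes }). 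Second, by the projection formula together with $\Phi\circ\tilde\delta=\delta$, one has $[L_0^{(d)}]\cdot[\tilde\delta]=[L_0^{(d)}]\cdot[\delta]=\deg\delta^*[L_0^{(d)}]=0$, since $L_0$ contracts the small diagonal (Definition \ref{def-L_0}).

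Combining these gives $\kappa_2\cdot[\tilde\delta]=0$, which, with the nefness from Proposition \ref{another lower bound for NEF cone}, shows that $\kappa_2$ is nef but not ample. I would note as a byproduct that the same two vanishings give $\bigl(t\kappa_2+(1-t)[L_0^{(d)}]\bigr)\cdot[\tilde\delta]=0$ for every $t\in[0,1]$, so each such convex combination is likewise nef but not ample, matching the claim recorded after the diagram \eqref{picture-nef-cone}. I do not anticipate a genuine obstacle in this argument; the only point needing care is the degree-zero computation of $\tilde\delta^*\mc O_{\mc Q}(1)$ through Lemma \ref{pullback-lemma}, and everything else is a direct application of the projection formula and the defining property $\delta^*[L_0]=0$.
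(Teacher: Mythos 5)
Your proposal is correct and follows essentially the same route as the paper's own proof: nefness is quoted from Proposition \ref{another lower bound for NEF cone}, and non-ampleness comes from pairing with the curve $\tilde{\delta}$ of \eqref{t delta} (this is where $n\geq d$ enters), using that $[\mc O_{\mc Q}(1)]\cdot[\tilde{\delta}]=0$ since the pulled-back quotient is $i_*\mc O_C^d$ with trivial pushforward, and $[L_0^{(d)}]\cdot[\tilde{\delta}]=[L_0^{(d)}]\cdot[\delta]=0$ by the projection formula and $\delta^*[L_0]=0$. The only cosmetic difference is that the paper cites Lemma \ref{lb-lemma} where you invoke Lemma \ref{pullback-lemma} for the degree-zero computation; the content is identical, and your closing remark about convex combinations with $[L_0^{(d)}]$ is a harmless extra.
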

\begin{proof}
	By Proposition \ref{another lower bound for NEF cone} we have that 
	$[\mc O_{\mc Q}(1)]+\mu^{(2)}_0[L^{(d)}_0]$ is nef. Now recall that
	when $n\geq d$ we have the curve $\t \delta\hookrightarrow \mc Q$ (\ref{t delta}).
	From the definition of $\t \delta$ and  Lemma \ref{lb-lemma} we have 
	$[\mc O_{\mc Q}(1)]\cdot [\t \delta]=0$. Also $\Phi_*\t \delta=\delta$. Hence 
	$[L^{(d)}_0]\cdot [\t \delta]=[L^{(d)}_0]\cdot [\delta]=0$. From this we get 
	$[\mc O_{\mc Q}(1)]+\mu^{(2)}_0[L^{(d)}_0]\cdot [\t \delta]=0$ and hence 
	$[\mc O_{\mc Q}(1)]+\mu^{(2)}_0L^{(d)}_0$ is not ample.
\end{proof}

As a corollary we get the following result. When $g=1$ note that $\mu_0^{(2)}=1$.

\begin{theorem}\label{O(1)+Delta/2 is nef for elliptic curves}
	Let $g=1$, $n\geq 1$ and $\mc Q=\mc Q(n,d)$.  
	Then the class $[\mc O_{\mc Q}(1)]+[\Delta_d/2]\in N^1(\mc Q)$ is nef.
	Moreover, 
	$${\rm Nef}(\mc Q)= \mb R_{\geq 0}([\mc O_{\mc Q}(1)]+[\Delta_d/2])+
	\mb R_{\geq 0}[\theta_d]+\mb R_{\geq 0}[\Delta_d/2]\,.$$
\end{theorem}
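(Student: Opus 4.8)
The plan is to specialize the general upper and lower bounds from the earlier sections using two numerical coincidences that occur exactly at $g=1$. First I would note that $\mu_0^{(2)}=\frac{g+1}{2g}=1$ when $g=1$, and that in Lemma \ref{E in terms of various bases} the coefficient $dg-d-g+1$ of $[x]$ in the expansion $[L_0]=(dg-d-g+1)[x]+[\Delta_d/2]$ vanishes at $g=1$; hence $[L_0^{(d)}]=[\Delta_d/2]$ in $N^1(\mc Q)$. Consequently the class $[\mc O_{\mc Q}(1)]+[\Delta_d/2]$ coincides with $\kappa_2=[\mc O_{\mc Q}(1)]+\mu_0^{(2)}[L_0^{(d)}]$, which is nef by Proposition \ref{another lower bound for NEF cone}; this proves the first assertion. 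For the inclusion $\supset$ in the cone equality it then suffices to observe that the other two generators $[\theta_d]$ and $[\Delta_d/2]=[L_0^{(d)}]$ are pullbacks along $\Phi$ of nef classes on $C^{(d)}$, hence nef, and that (by Proposition \ref{dim-N^1}) $N^1(\mc Q)$ is three-dimensional with the three generators linearly independent, so the right-hand side is a full simplicial cone inside ${\rm Nef}(\mc Q)$.

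For the reverse inclusion I would use the dual-cone method, exhibiting three effective curves whose intersection pairing against the three generators is diagonal with positive entries. The candidates are the fibre line $[l]$ (Definition \ref{line-fiber-hc}) and the section-images $\eta_*[\delta]$ and $\eta_*[l']$ of the small diagonal and of the gonality $\mb P^1$ (the latter requiring $d\geq{\rm gon}(C)=2$), where $\eta$ is the section of $\Phi$ attached to the trivial quotient. The essential input is $\eta^*[\mc O_{\mc Q}(1)]=[\mc O(-\Delta_d/2)]$, which follows from Lemma \ref{restriction of O(1) to symmetric product} and G\"ottsche's formula; since $\eta^*[\Delta_d/2]=[\Delta_d/2]$ this gives $\eta^*([\mc O_{\mc Q}(1)]+[\Delta_d/2])=0$, so $[\mc O_{\mc Q}(1)]+[\Delta_d/2]$ annihilates both $\eta_*[\delta]$ and $\eta_*[l']$ automatically. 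Using in addition $[\mc O_{\mc Q}(1)]\cdot[l]=1$, the vanishing of $\Phi$-pullbacks against $[l]$, the degree computations $\delta^*[\theta_d]=d^2g$ and $\delta^*[L_0]=0$, and $[\theta_d]\cdot[l']=0$, $[L_0]\cdot[l']>0$ from Proposition \ref{nef-cone-C^(d)}, I would read off the matrix $\mathrm{diag}(1,d^2g,[L_0]\cdot[l'])$. Positivity of all entries shows the three curves are linearly independent and that the dual of the cone they span is precisely $\langle[\mc O_{\mc Q}(1)]+[\Delta_d/2],[\theta_d],[\Delta_d/2]\rangle$; since ${\rm Nef}(\mc Q)$ lies in the dual of any cone of effective curves, this yields the containment. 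Equivalently, this is exactly Proposition \ref{ub-g=1}, applied with $g=1$, $d\geq 2$.

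The step I expect to be decisive is the positivity of the pairing against $\eta_*[\delta]$. Using $[\Delta_d/2]=\mu_0[L_0]-(1-\mu_0)[\theta_d]$ one computes $[\Delta_d/2]\cdot\eta_*[\delta]=-(1-\mu_0)d^2g$, which is $0$ when $g=1$ but strictly negative when $g\geq 2$; in the latter range $[\Delta_d/2]$ is not even nef and the clean cone description collapses. Thus the whole theorem hinges on the single identity $[\Delta_d/2]=[L_0^{(d)}]$ valid only at $g=1$, and once this is in hand the argument reduces to bookkeeping over Propositions \ref{another lower bound for NEF cone} and \ref{ub-g=1}. The one genuine loose end is the degenerate case $d=1$, where $\Delta_d=0$ and $L_0$ is undefined (indeed $\mc Q(n,1)\cong C\times\mb P^{n-1}$); there the first assertion is trivial and the cone description should be read as excluded or treated by hand.
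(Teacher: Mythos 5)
Your proposal is correct and follows essentially the same route as the paper, which obtains this theorem as a direct corollary of Proposition \ref{another lower bound for NEF cone} (noting $\mu_0^{(2)}=1$ and $[L_0]=[\Delta_d/2]$ at $g=1$, so $\kappa_2=[\mc O_{\mc Q}(1)]+[\Delta_d/2]$) together with the upper bound of Proposition \ref{ub-g=1}, whose dual-cone computation against $[l]$, $\eta_*[\delta]$, $\eta_*[l']$ you reproduce. The only cosmetic difference is that you derive $[\mc O_{\mc Q}(1)]\cdot\eta_*[\delta]=0$ from $\eta^*\mc O_{\mc Q}(1)=\mc O(-\Delta_d/2)$ via G\"ottsche's formula, whereas the paper computes it directly from the triviality of the conormal bundle of the diagonal on an elliptic curve --- an equivalent calculation, and you correctly identify your argument as Proposition \ref{ub-g=1}.
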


\section{Curves over the small diagonal}\label{curves-over-small-diagonal}

Throughout this section the genus of the curve $C$ will 
be $g(C)\geq 2$ and $C$ is a very general curve. Recall that $\Phi:\mc Q\to C^{(d)}$ is the 
Hilbert-Chow map.
\begin{proposition}\label{prop-curve-over-small-diag}
Let $f:D\to \mc Q(n,d)$ be such that $\Phi\circ f$ factors 
through the small diagonal. Then $[\mc O_{\mc Q}(1)]\cdot [D]\geq 0$.
\end{proposition}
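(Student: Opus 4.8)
The plan is to compute $[\mc O_{\mc Q}(1)]\cdot [D]$ directly as the degree of a pushforward and to exploit that, under the hypothesis, the universal quotient is supported scheme-theoretically on a single smooth curve, namely the graph of a morphism $D\to C$. As in the proof of Lemma \ref{the open set V}, write $\mc B_D:=(id_C\times f)^*\mc B_{\mc Q}$ for the pulled-back universal quotient on $C\times D$ and let $p_D\colon C\times D\to D$ be the projection; then $[\mc O_{\mc Q}(1)]\cdot [D]={\rm deg}(p_{D*}\mc B_D)$. Since $\Phi\circ f$ factors through the small diagonal $\delta\colon C\hookrightarrow C^{(d)}$, it equals $\delta\circ\gamma$ for a unique morphism $\gamma\colon D\to C$. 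Recalling that $\mc B_{\mc Q}$ is annihilated by $\Phi^*\mc I_\Sigma$ and that $(id_C\times\delta)^*\Sigma=d\,\Delta_C$, pulling back along $id_C\times\gamma$ shows that $\mc B_D$ is annihilated by $I^d$, where $I=\mc O(-\Gamma)$ is the ideal sheaf of the graph $\Gamma\subset C\times D$ of $\gamma$, and $\Gamma\cong D$ via $p_D$.

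Next I would filter $\mc B_D$ by the subsheaves $I^j\mc B_D$, obtaining $\mc B_D=I^0\mc B_D\supseteq I\mc B_D\supseteq\cdots\supseteq I^d\mc B_D=0$ with graded pieces ${\rm gr}^j:=I^j\mc B_D/I^{j+1}\mc B_D$, each an $\mc O_\Gamma$-module. All of these are finite over $D$ with zero-dimensional fibres, so $R^1p_{D*}$ vanishes on each and $p_{D*}$ is exact along the filtration; hence ${\rm deg}(p_{D*}\mc B_D)=\sum_{j=0}^{d-1}{\rm deg}({\rm gr}^j)$, where I view ${\rm gr}^j$ as a coherent sheaf on $D$ through $\Gamma\cong D$. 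It then suffices to prove ${\rm deg}({\rm gr}^j)\ge 0$ for each $j$. Since $\Gamma$ is a smooth (hence Cartier) divisor in the surface $C\times D$, one has $I^j/I^{j+1}\cong (I/I^2)^{\otimes j}$, and a direct normal-bundle computation for the graph gives $I/I^2\cong\gamma^*K_C$, a line bundle on $D$ of degree $(2g-2)\,{\rm deg}(\gamma)\ge 0$ (this is the only place $g\ge 1$ enters). The surjection $\mc O^n_{C\times D}\to\mc B_D$ induces $\mc O^n_D\twoheadrightarrow{\rm gr}^0$, and multiplication by $I^j$ gives surjections $(I/I^2)^{\otimes j}\otimes_{\mc O_D}{\rm gr}^0\twoheadrightarrow{\rm gr}^j$; composing, each ${\rm gr}^j$ is a quotient of $\big((\gamma^*K_C)^{\otimes j}\big)^{\oplus n}$, a nef vector bundle on $D$.

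Finally I would invoke the standard fact that a coherent quotient of a nef vector bundle on a smooth projective curve has non-negative degree: split off the torsion part (whose degree is its length, hence $\ge 0$) and note that the locally free quotient is a quotient bundle of a nef bundle, so has non-negative degree. This yields ${\rm deg}({\rm gr}^j)\ge 0$ for all $j$, and summing gives $[\mc O_{\mc Q}(1)]\cdot[D]\ge 0$. The main technical point, and the only place the small-diagonal hypothesis is genuinely used, is the second step: identifying the conormal bundle $I/I^2$ of the graph as $\gamma^*K_C$ and producing the multiplication surjections onto the graded pieces, so that the relevant twist is a non-negative line bundle precisely when $g\ge 1$. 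The scheme-theoretic support statement $I^d\mc B_D=0$ (ensuring the filtration has exactly $d$ steps and correctly accounts for the fibrewise length $d$) is the other point requiring care, but it follows directly from the description of the support of $\mc B_{\mc Q}$ established earlier in the paper.
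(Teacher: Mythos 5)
Your proof is correct and takes essentially the same route as the paper's: both filter $\mc B_D$ by powers of the ideal sheaf $\mc I$ of the graph of $\gamma:D\to C$, identify the conormal bundle $\mc I/\mc I^2$ with $\gamma^*\omega_C$ via adjunction, observe that $\mc B_D/\mc I\mc B_D$ is globally generated, and conclude non-negativity of the degree of each graded piece, hence of $\deg(p_{D*}\mc B_D)=[\mc O_{\mc Q}(1)]\cdot[D]$. The only difference is that you work with the multiplication surjections $(\mc I/\mc I^2)^{\otimes j}\otimes \mc B_D/\mc I\mc B_D\twoheadrightarrow \mc I^j\mc B_D/\mc I^{j+1}\mc B_D$ where the paper asserts isomorphisms; your version is the more careful one (the isomorphism need not hold in general), and a surjection suffices since quotients of nef sheaves on a curve have non-negative degree.
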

\begin{proof}
Since $\Phi\circ f$ factors through the small diagonal, there is a map 
$g:D\to C$ such that if $\Gamma:=\Gamma_g$ denotes the graph of 
$g$ in $C\times D$, and $\mc O_{C\times D}^n\to \mc B_D$ is the quotient 
on $C\times D$, then $\mc B_D$ is supported on $\mc O_{C\times D}/\ms I(\Gamma)^d$.  
Denote $\mc I:=\ms I(\Gamma)$.
Then $\mc B_D/\mc I\mc B_D$ is a globally generated sheaf on $D$
and so its determinant has degree $\geq 0$. Now consider the sheaf
$$\mc I^i\mc B_D/\mc I^{i+1}\mc B_D\cong (\mc I/\mc I^2)^{\otimes i}\otimes \mc B_D/\mc I\mc B_D\,.$$
Using adjunction it is easily seen that $\mc I/\mc I^2\cong g^*\omega_C$. 
Since ${\rm det}(\mc B_D/\mc I\mc B_D)$ has degree $\geq0$, it follows that 
${\rm det}(\mc I^i\mc B_D/\mc I^{i+1}\mc B_D)$ has degree $\geq 0$.
From the filtration 
$$\mc B_D\supset \mc I\mc B_D\supset \mc I^2\mc B_D\supset \ldots \supset \mc I^d\mc B_D=0$$
we easily conclude that $[\mc O_{\mc Q}(1)]\cdot [D]\geq 0$.
\end{proof}

\begin{lemma}
	Let $D\to C^{(d)}$ be a morphism. Then we can find a cover 
	$\t D\to D$ such that the composite $\t D\to D  \to C^{(d)}$ 
	factors through $C^d$. 
\end{lemma}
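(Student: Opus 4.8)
The plan is to realise $\t D$ as a component of the fibre product of $D$ with $C^d$ over $C^{(d)}$. The key fact I would use is that the quotient map $\pi\colon C^d\to C^{(d)}$ by the symmetric group $S_d$ is finite and surjective, being the quotient of a projective variety by a finite group. Writing $h\colon D\to C^{(d)}$ for the given morphism, I would first form the fibre product
$$\t D_0:=D\times_{C^{(d)}}C^d,$$
with its two projections $\mathrm{pr}_1\colon \t D_0\to D$ and $\mathrm{pr}_2\colon \t D_0\to C^d$ satisfying $\pi\circ \mathrm{pr}_2=h\circ \mathrm{pr}_1$. Since finiteness and surjectivity are preserved under base change, $\mathrm{pr}_1$ is finite and surjective.

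Next I would cut down to an honest cover by a smooth projective curve. Because $D$ is irreducible and $\mathrm{pr}_1$ is finite surjective, the fibre of $\mathrm{pr}_1$ over the generic point of $D$ is finite and nonempty, so at least one irreducible component $Z\subset \t D_0$ dominates $D$. Let $\t D$ be the normalisation of such a $Z$. Then $\t D$ is a smooth projective curve and the induced morphism $\t D\to Z\to D$ is finite and surjective, i.e.\ a cover of $D$ in the required sense.

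Finally, the desired factorisation is immediate: composing $\t D\to Z\hookrightarrow \t D_0$ with $\mathrm{pr}_2$ gives a map $\t D\to C^d$, and by the identity $\pi\circ\mathrm{pr}_2=h\circ\mathrm{pr}_1$ the composite $\t D\to D\xrightarrow{h} C^{(d)}$ agrees with $\t D\to C^d\xrightarrow{\pi}C^{(d)}$. Hence the composite $\t D\to D\to C^{(d)}$ factors through $C^d$, as required.

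The argument is entirely formal once one observes that $\pi$ is finite surjective, so there is no serious obstacle; the only step needing a little care is the production of a smooth projective curve, which is handled by passing to a dominating irreducible component of $\t D_0$ and normalising. I would simply remark that the resulting cover will in general be ramified, namely over the locus where $h(D)$ meets the big diagonal, so it cannot be taken to be étale.
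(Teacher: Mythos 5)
Your proof is correct and follows essentially the same route as the paper, which likewise takes an irreducible component of $D\times_{C^{(d)}}C^d$ dominating $D$ and passes to a resolution (your normalisation plays the same role, since for curves normalisation is a resolution). Your write-up just spells out the details the paper leaves implicit, namely that $\pi\colon C^d\to C^{(d)}$ is finite surjective and hence a dominating component exists and yields a finite cover $\t D\to D$.
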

\begin{proof}
	Let $D_1$ be a component of $D\times_{C^{(d)}}C^d$ 
	which dominates $D$. Take $\t D$ to be a resolution of $D_1$. 
\end{proof}

\begin{corollary}\label{cover-D}
	Let $D\to \mc Q$ be a morphism. Replacing $D$ by a cover $\t D$
	we may assume that the map $\t D\to D \to \mc Q\to C^{(d)}$
	factors through $C^d$. 
\end{corollary}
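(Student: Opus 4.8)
The plan is to obtain this as an immediate consequence of the preceding Lemma, applied to the Hilbert--Chow map. Given the morphism $f\colon D\to \mc Q$, I would first compose with $\Phi\colon \mc Q\to C^{(d)}$ to form the morphism $\Phi\circ f\colon D\to C^{(d)}$. This is precisely the sort of map to which the Lemma applies, so the bulk of the work has already been done there.

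Applying the Lemma to $\Phi\circ f$ produces a cover $\t D\to D$ (concretely, a resolution of a component of $D\times_{C^{(d)}}C^d$ dominating $D$, as in the proof of the Lemma) such that the composite $\t D\to D\xrightarrow{\Phi\circ f} C^{(d)}$ factors through the quotient map $C^d\to C^{(d)}$. Since the map $\t D\to D\to \mc Q\to C^{(d)}$ appearing in the statement is, by the definition of $\Phi$, literally the map $\t D\to D\xrightarrow{\Phi\circ f} C^{(d)}$, it factors through $C^d$ as required.

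There is essentially no obstacle here: the Corollary is a restatement of the Lemma with the auxiliary map taken to be $\Phi\circ f$. The only points worth recording are that the notion of cover supplied by the Lemma---a generically finite surjection from a smooth projective curve---is exactly what is wanted, and that replacing $D$ by $\t D$ leaves the map to $\mc Q$ intact, since one simply precomposes $f$ with $\t D\to D$ to obtain $\t D\to \mc Q$. Thus the written proof will consist of little more than invoking the Lemma for the composite $\Phi\circ f$.
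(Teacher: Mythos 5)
Your proposal is correct and matches the paper exactly: the paper states this corollary without proof, as an immediate consequence of the preceding lemma applied to the composite $\Phi\circ f\colon D\to C^{(d)}$, which is precisely your argument. Your added remarks (that the cover is a generically finite surjection from a smooth projective curve, obtained as a resolution of a dominating component of $D\times_{C^{(d)}}C^d$, and that one precomposes $f$ with $\t D\to D$ to keep the map to $\mc Q$) are consistent with the paper's proof of the lemma.
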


In view of the above, given a map $D\to Q$ we may assume that the composite 
$D\to \mc Q\to C^{(d)}$ factors through $C^d$. Let each component be given
by a map $f_i:D\to C$. 
Denote by $i_D:\Gamma\hookrightarrow C\times D$ the pullback of the universal 
subscheme $\Sigma\hookrightarrow C\times C^{(d)}$ to $C\times D$. 
The ideal sheaf of $\Gamma$ is the product $\ms I(\Gamma_{f_i})$, the ideal sheaves of the 
graphs $\Gamma_{f_i}\subset C\times D$.
Moreover, $\mc B_D$ is supported on $\Gamma$.
Let $g_1,g_2,\ldots,g_r$ be the distinct maps in the set 
$\{f_1,f_2,\ldots,f_d \}$ and assume that $g_i$ occurs $d_i$ 
many times. Then we have $\ms I(\Gamma)=\prod_{i=1}^r\ms I(\Gamma_{g_i})^{d_i}$.
There is a natural map
$$\psi:\mc B_D\to \bigoplus \mc B_D/\ms I(\Gamma_{g_i})^{d_i}\mc B_D\,.$$
\begin{lemma}
	Let $f:D\to \mc Q$ be such that $\Phi\circ f$ factors through
	$C^d\to C^{(d)}$. If $\psi$ is an isomorphism then $[\mc O_{\mc Q}(1)]\cdot [D]\geq 0$.
\end{lemma}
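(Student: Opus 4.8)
The plan is to exploit the hypothesis that $\psi$ is an isomorphism in order to split $\mc B_D$ as a direct sum indexed by the distinct graphs $\Gamma_{g_i}$, and then to apply the argument of Proposition \ref{prop-curve-over-small-diag} to each summand separately, since each summand is supported on a thickening of a \emph{single} graph.

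First I would recall that, by Lemma \ref{pullback-lemma} (exactly as used in the proof of Lemma \ref{the open set V}), one has $[\mc O_{\mc Q}(1)]\cdot [D]={\rm deg}(p_{D*}\mc B_D)$, where $p_D:C\times D\to D$ is the projection and the degree is that of the determinant line bundle. Write $\mc B_i:=\mc B_D/\ms I(\Gamma_{g_i})^{d_i}\mc B_D$. Since $\psi$ is assumed to be an isomorphism, applying the additive functor $p_{D*}$ to $\mc B_D\xrightarrow{\sim}\bigoplus_i\mc B_i$ yields $p_{D*}\mc B_D\cong\bigoplus_i p_{D*}\mc B_i$, so that
$$[\mc O_{\mc Q}(1)]\cdot [D]=\sum_i {\rm deg}(p_{D*}\mc B_i)\,.$$
Thus it suffices to prove ${\rm deg}(p_{D*}\mc B_i)\geq 0$ for each $i$.

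Fixing $i$, the composite $\mc O^n_{C\times D}\to \mc B_D\to \mc B_i$ exhibits $\mc B_i$ as a quotient of $\mc O^n_{C\times D}$ supported on the $d_i$-th thickening of the single graph $\Gamma_{g_i}$; this is precisely the situation of Proposition \ref{prop-curve-over-small-diag} with the map $g_i:D\to C$ and length $d_i$ in place of $g$ and $d$. Concretely, since $\Gamma_{g_i}$ is a Cartier divisor in $C\times D$, the ideal $\ms I(\Gamma_{g_i})$ is invertible and the $\ms I(\Gamma_{g_i})$-adic filtration of $\mc B_i$ has graded pieces $\ms I(\Gamma_{g_i})^{\otimes j}\otimes(\mc B_i/\ms I(\Gamma_{g_i})\mc B_i)$ for $0\leq j\leq d_i-1$. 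Restricting to $\Gamma_{g_i}\cong D$, the conormal bundle $\ms I(\Gamma_{g_i})/\ms I(\Gamma_{g_i})^2$ equals $g_i^*\omega_C$, of degree $({\rm deg}\,g_i)(2g(C)-2)\geq 0$ because $g(C)\geq 2$, while $\mc B_i/\ms I(\Gamma_{g_i})\mc B_i$ is globally generated and hence pushes forward to a globally generated sheaf on $D$ of non-negative determinantal degree. As each graph is finite (indeed isomorphic) over $D$, $p_{D*}$ is exact on these pieces and the determinant is multiplicative along the filtration; hence every graded piece contributes a non-negative degree and ${\rm deg}(p_{D*}\mc B_i)\geq 0$, exactly as in Proposition \ref{prop-curve-over-small-diag}.

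The only genuinely new content beyond Proposition \ref{prop-curve-over-small-diag} is the decomposition of $\mc B_D$ itself: everything rests on $\psi$ being an isomorphism, which is what forces $\mc B_D$ to split as $\bigoplus_i\mc B_i$ and lets the degrees add. I expect no serious obstacle here; the remaining technical points, namely the exactness of $p_{D*}$ on sheaves supported on the finite scheme $\Gamma$ and the identification of the conormal bundle with $g_i^*\omega_C$, are routine and already appear in the proof of Proposition \ref{prop-curve-over-small-diag}.
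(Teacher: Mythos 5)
Your proof is correct and follows essentially the same route as the paper: use the isomorphism $\psi$ to split $\mc B_D\cong \bigoplus_i \mc B_D/\ms I(\Gamma_{g_i})^{d_i}\mc B_D$, note that degrees of determinants of pushforwards then add, and reduce to showing each summand contributes non-negatively via Proposition \ref{prop-curve-over-small-diag}. The only (cosmetic) difference is that the paper applies that proposition as a black box, observing that each summand is a quotient of $\mc O^n_{C\times D}$ supported on a thickened graph and hence defines a map $D\to \mc Q(n,d_i')$ whose Hilbert--Chow image lies in the small diagonal, whereas you replay its filtration argument (conormal bundle $g_i^*\omega_C$, globally generated bottom quotient) inline.
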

\begin{proof}
	Since $\mc B_D$ is a quotient of $\mc O^n_{C\times D}$ it follows
	that each $\mc B_D/\ms I(\Gamma_{g_i})^{d_i}\mc B_D$ is a quotient 
	of $\mc O^n_{C\times D}$. Thus, each $\mc B_D/\ms I(\Gamma_{g_i})^{d_i}\mc B_D$
	defines a map $D \to \mc Q(n,d'_i)$ such that the image under 
	the map $\Phi:\mc Q(n,d'_i)\to C^{(d'_i)}$ is the small diagonal.
	By Proposition \ref{prop-curve-over-small-diag}
	it follows that degree of ${\rm det}(p_{D*}(\mc B_D/\ms I(\Gamma_{g_i})^{d_i}\mc B_D))$
	is $\ge 0$. Since $\psi$ is an isomorphism it follows that 
	degree of ${\rm det}(p_{D*}(\mc B_D))$ is $\geq 0$.
\end{proof}

We can use the above method to prove a result similar to Theorem 
\ref{cone-d=2} when $d=3$.

\begin{corollary}\label{nefness of the line bundle-d=3}
	Let $d=3$. Consider the Quot scheme $\mc Q=\mc Q(n,3)$.
	Let $\mu_0^{(3)}=\dfrac{g+2}{3g}$. Then $[\mc O_{\mc Q}(1)]+\mu_0^{(3)}[L_0^{(3)}]$ is nef. 
\end{corollary}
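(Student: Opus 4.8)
The plan is to apply the nefness criterion of Theorem \ref{criterion-for-nefness} with $\beta=\mu_0^{(3)}[L_0^{(3)}]$, which reduces the claim to showing that $[\mc O(-\Delta_{\mathbf d}/2)]+\mu_0^{(3)}\pi_{\mathbf d}^*[L_0^{(3)}]$ is nef on $C^{(\mathbf d)}$ for each of the three partitions $\mathbf d\in\{(3),(2,1),(1,1,1)\}$ of $3$. For $\mathbf d=(3)$, Lemma \ref{E in terms of various bases} gives $[\mc O(-\Delta_3/2)]+\mu_0^{(3)}[L_0^{(3)}]=(1-\mu_0^{(3)})[\theta_3]$, which is nef since $\theta_3$ is nef and $\mu_0^{(3)}\le1$. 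For $\mathbf d=(1,1,1)$ the diagonal term vanishes and $\pi_{(1,1,1)}^*[L_0^{(3)}]$ is the pullback of the nef class $[L_0^{(3)}]$ along the finite map $C^3\to C^{(3)}$, hence nef. So everything comes down to the partition $(2,1)$.

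For $\mathbf d=(2,1)$ I would first observe that the crude estimate is genuinely insufficient: running Lemma \ref{compare various L_0} as in Proposition \ref{another lower bound for NEF cone} only bounds the relevant class from below by $\tfrac{g-1}{6g}\bigl(3[\theta_2]-[L_0^{(2)}]\bigr)$ on $C^{(2)}$, and because $\mu_0^{(3)}<\mu_0^{(2)}$ this is not nef for small $g$ (already for $g=2,3$). The remedy is to compute the pullback exactly. Writing $p,p'$ for the two projections of $C^{(2)}\times C$ and $I=\{(A,x):x\in\mathrm{supp}(A)\}$ for the incidence divisor, one finds, e.g. after pulling back along $q:C\times C\times C\to C^{(2)}\times C$ and using $[d_{ij}^*\Theta]=(g-1)(\eta_i+\eta_j)+\Delta_{ij}$, that
\[\pi_{(2,1)}^*[L_0^{(3)}]=p^*[L_0^{(2)}]+(g-1)\,p^*[x]+2(g-1)\,p'^*[\mathrm{pt}]+[I].\]
Substituting this together with $[-\Delta_2/2]=(1-\mu_0^{(2)})[\theta_2]-\mu_0^{(2)}[L_0^{(2)}]$, the $p^*$–part collapses neatly to $\tfrac{2(g-1)}{3g}\,p^*\bigl([\theta_2]+[x]\bigr)$, which is nef; so the class to be shown nef is
\[\Xi=\tfrac{2(g-1)}{3g}\,p^*\bigl([\theta_2]+[x]\bigr)+\tfrac{2(g-1)(g+2)}{3g}\,p'^*[\mathrm{pt}]+\mu_0^{(3)}[I],\]
a sum of two nef classes and $\mu_0^{(3)}[I]$.

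The remaining obstacle, and the genuinely hard point, is that $[I]$ is \emph{not} nef: on the curve $D'=\{([2a],a):a\in C\}\subset I$ one computes $[I]\cdot[D']=4-4g<0$ for $g\ge2$, so I cannot finish by merely adding nef classes. Here I would invoke the ``method'' of this section. Any irreducible curve $D$ with $[I]\cdot[D]<0$ must lie in $I$, so under $\eta_{(2,1)}$ its image in $\mc Q$ consists of quotients $\mc O_C^n\to\mc O_A\oplus\mc O_x$ with $x\in\mathrm{supp}(A)$; there the length-one point collides with a point of the length-two part, producing a fat ($k^2$) point. Splitting the universal quotient by the map $\psi$ of the preceding lemma and applying Proposition \ref{prop-curve-over-small-diag} to the colliding piece — which is supported over a small diagonal, and whose filtration argument uses only $\deg\omega_C\ge0$ — bounds $[\mc O_{\mc Q}(1)]\cdot[D]$ from below by the diagonal contribution, and this is exactly balanced by the nef term $p^*([\theta_2]+[x])$; on $D'$, for instance, both contributions combine to give $\Xi\cdot[D']=2(g-1)\ge0$. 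Curves not contained in $I$ satisfy $[I]\cdot[D]\ge0$ and are immediate. I expect the careful bookkeeping of this compensation along the incidence locus — equivalently, verifying $\Xi\cdot[D]\ge0$ for every curve $D\subset I$ — to be the main difficulty, precisely because the inequality is tight (it degenerates to an equality on the small diagonal) and therefore leaves no slack to exploit at small genus.
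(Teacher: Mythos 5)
Your reduction via Theorem \ref{criterion-for-nefness} is legitimate, and everything you actually compute is correct: the partitions $(3)$ and $(1,1,1)$ are handled properly, the formula $\pi_{(2,1)}^*[L_0^{(3)}]=p^*[L_0^{(2)}]+(g-1)p^*[x]+2(g-1)p'^*[\mathrm{pt}]+[I]$ checks out (pull back along $q:C^3\to C^{(2)}\times C$ and compare with $2(g-1)(\eta_1+\eta_2+\eta_3)+\sum_{i<j}\Delta_{ij}$), the collapse of the $p^*$-part is right, and so is $[I]\cdot[D']=4-4g$. But the case that carries all the content --- curves $D\subset I$ --- is exactly where your text stops being a proof: you flag the verification yourself as unresolved, and the sketched remedy would fail as stated. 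The splitting lemma of this section requires $\psi$ to be an \emph{isomorphism}, and for a curve in $I$ with components $(g_1,g_1,g_2)$ this fails at every point of $\Gamma_{g_1}\cap\Gamma_{g_2}$ (there the fibers of $\bigoplus_i\mc B_D/\ms I(\Gamma_{g_i})^{d_i}\mc B_D$ jump in length), and such collision points are unavoidable for most curves in the incidence locus; the kernel and cokernel of $\psi$ enter the degree count with no a priori sign, so Proposition \ref{prop-curve-over-small-diag} cannot simply be ``applied to the colliding piece''. Your closing diagnosis is also off: the inequality is not tight along $I$ (your own computation $\Xi\cdot[D']=2(g-1)>0$ shows slack, and the paper's version of the estimate has slack $\frac{g+5}{6g}[L_0^{(2)}]\cdot[D]$), and the restriction $2\le g\le 4$ in Theorem \ref{cone-d=3} comes from the gonality hypothesis needed for the \emph{upper} bound, not from this corollary, which holds for all $g\ge 2$.

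For comparison, the paper never invokes Theorem \ref{criterion-for-nefness} here: it stratifies $\mc Q(n,3)$ by the three quotient types, disposes of types (1) and (3) via $U$ and $U'$ (Corollary \ref{cor-image-meets-U} and Lemma \ref{the open set V}), and for a curve of type-(2) quotients factors $D\to C^3$ as $t\mapsto(g_1(t),g_1(t),g_2(t))$, splits off a cyclic subsheaf $\mc O_{\Gamma'}$ by a \emph{general section} as in Lemma \ref{the open set U} (diagram \eqref{diagram-inductive-step}), obtaining $[\mc O_{\mc Q}(1)]\cdot[D]\ge[\mc O(-\Delta_2/2)]\cdot[D]$, and then uses the exact identity $[L_0^{(3)}]\cdot[D]=2[L_0^{(2)}]\cdot[D]$ (the $d_{12}$-factor of $L_0^{(3)}$ contracts, the other two coincide), so the conclusion reads $(2\mu_0^{(3)}-\mu_0^{(2)})[L_0^{(2)}]\cdot[D]=\frac{g+5}{6g}[L_0^{(2)}]\cdot[D]\ge 0$; this sidesteps the isomorphism hypothesis on $\psi$ entirely. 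If you want to stay within your framework, the gap can in fact be closed without returning to $\mc Q$ at all: identify $I\cong C\times C$ via $(x,y)\mapsto(x+y,y)$, let $A,B$ be the fiber classes and $\Delta$ the diagonal; then $p^*[\theta_2]|_I=(g+1)(A+B)-\Delta$, $p^*[x]|_I=A+B$, $p'^*[\mathrm{pt}]|_I=B$, $[I]|_I=(2-2g)B+\Delta$, whence $\Xi|_I=\frac{2(g-1)(g+2)}{3g}(A+B)+\frac{4-g}{3g}\Delta=\frac{g+5}{6g}\phi_-^*[\Theta]+\frac{g-1}{2g}\phi_+^*[\Theta]$, where $\phi_\mp:C\times C\to J(C)$ are the difference and sum maps with $\phi_-^*[\Theta]=(g-1)(A+B)+\Delta$ and $\phi_+^*[\Theta]=(g+1)(A+B)-\Delta$. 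Both summands are pullbacks of $\Theta$ along morphisms to $J(C)$, hence nef, so $\Xi\cdot[D]\ge 0$ for every $D\subset I$ (consistently, $\Xi\cdot[D']=\frac{g-1}{2g}\cdot 4g=2(g-1)$), while curves not contained in $I$ are immediate as you say. With that one computation your route would be complete, for all $g\ge 2$, and genuinely different from the paper's.
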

\begin{proof}
	If $d=3$ there are only these types of quotients:
	\begin{enumerate}
		\item $\mc O^n_C\to \mc O_{C}/\mf m_{C,c_1}\mf m_{C,c_2}\mf m_{C,c_3}$\,,\\
		\item $\mc O^n_C\to \mc O_{C,c_1}/{\mf m}_{C,c_1}\oplus 
			\mc O_{C}/\mf m_{C,c_1}\mf m_{C,c_2}$ \,,\\
		\item $\mc O^n_C\to \dfrac{\mc O_{C,c}}{\mf m_{C,c}}\oplus \dfrac{\mc O_{C,c}}{\mf m_{C,c}}
				\oplus \dfrac{\mc O_{C,c}}{\mf m_{C,c}}$\,.
	\end{enumerate}	
	Let $f:D\to \mc Q$ be a map. If $D$ contains a quotient 
	of type (1) or (3) then $D$ meets $U$ or $U'$(see Definition \ref{def-U} and 
	Definition \ref{def-U'}). Thus, in these cases 
	$([\mc O_{\mc Q}(1)]+\mu_0^{(3)}[L_0^{(3)}])\cdot [D]\geq 0$
	by Corollary \ref{cor-image-meets-U} and Lemma \ref{the open set V}. 
	
	Now consider the case when all points in the image of $D$ are of type (2).
	After replacing $D$ by a cover, using Corollary \ref{cover-D}, 
	we may assume that the map $D\to \mc Q$ factors through $C^3$.
	Since the images of points of $D$ represent quotients of 
	type (2), we may assume that the map
	from $D\to C^3$ looks like $d\mapsto (g_1(d),g_1(d),g_2(d))$.
	Now consider a general section $\mc O_{C\times D}\to \mc B_D$.
	Arguing as in the proof of Lemma \ref{the open set U}
	we get a diagram as in equation \eqref{diagram-inductive-step},
	such that $\mc O_{\Gamma'}$ defines a map $D\to C^{(2)}$ 
	and $\mc F'=\mc F/T_0(\mc F)$ is a line bundle on $D$ which
	is globally generated.
	Hence
	\begin{align*}	
	[\mc O_{\mc Q}(1)]\cdot [D]& \geq  [\mc O(-\Delta_{2}/2)]\cdot [D] + [c_1(p_{D*}(\mc F))]\cdot [D]\\
	&\geq  -\mu^{(2)}_0[L^{(2)}_0]\cdot [D]  \,.
	\end{align*}
	One easily checks using the definition of $L_0$ that in this case 
	$[L_0^{(3)}]\cdot [D]=2[L_0^{(2)}]\cdot [D]$. Thus,
	$$([\mc O_{\mc Q}(1)]+\mu_0^{(3)}[L_0^{(3)}])\cdot [D]\geq (2\mu_0^{(3)}-\mu_0^{(2)})[L_0^{(2)}]\cdot [D]\geq 0\,.$$
	This completes the proof of the Corollary.
\end{proof}

Combining this with Proposition \ref{nef cone of quot schemes } 
we get the following result. 

\begin{theorem}\label{cone-d=3}
	Let $C$ be a very general curve of genus $2\leq g(C)\leq 4$. 
	Let $n\geq3$ and let $\mc Q=\mc Q(n,3)$. Let $\mu_0=\dfrac{g+2}{3g}$
	Then 
	$${\rm Nef}(\mc Q)= \mb R_{\geq 0}([\mc O_{\mc Q}(1)]+\mu_0[L_0^{(3)}])+
		\mb R_{\geq 0}[\theta_d]+\mb R_{\geq 0}[L_0^{(3)}]\,.$$
\end{theorem}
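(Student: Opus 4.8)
The plan is to prove the two inclusions separately; both reduce to results already established, so the present theorem is essentially an assembly step.

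For the inclusion $\subset$, I would first check that the hypotheses of Proposition \ref{nef cone of quot schemes } are met. For a very general curve the gonality equals $\lfloor (g+3)/2\rfloor$, which is $2$ when $g=2$ and $3$ when $g=3,4$; hence for $2\le g\le 4$ we have $3=d\ge {\rm gon}(C)$, and since $n\ge 3=d$ the full hypothesis $n\ge d\ge {\rm gon}(C)$ holds. Proposition \ref{nef cone of quot schemes } then gives exactly the upper bound ${\rm Nef}(\mc Q)\subset \mb R_{\ge 0}([\mc O_{\mc Q}(1)]+\mu_0[L_0^{(3)}])+\mb R_{\ge 0}[\theta_d]+\mb R_{\ge 0}[L_0^{(3)}]$. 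Note that this genus restriction is sharp for $d=3$: at $g=5$ the gonality jumps to $4$, so $d<{\rm gon}(C)$ and the proposition no longer applies.

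For the reverse inclusion $\supset$ it suffices to show that each of the three generating rays lies in ${\rm Nef}(\mc Q)$, since the nef cone is convex. The class $[\mc O_{\mc Q}(1)]+\mu_0[L_0^{(3)}]$ is nef by Corollary \ref{nefness of the line bundle-d=3}. The remaining classes $[\theta_d]$ and $[L_0^{(3)}]$ are pullbacks along $\Phi$ of the classes $\theta_d$ and $L_0$ on $C^{(3)}$, which are nef by Proposition \ref{nef-cone-C^(d)} (valid since $d\ge {\rm gon}(C)$ in this range); as the pullback of a nef class under any morphism is again nef, both are nef on $\mc Q$. Combining the two inclusions yields the claimed equality.

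I expect no genuine obstacle at this stage, since all the hard analysis has been absorbed into Corollary \ref{nefness of the line bundle-d=3}. The real difficulty was handling the \emph{type (2)} quotients $\mc O^n_C\to \mc O_{C,c_1}/\mf m_{C,c_1}\oplus \mc O_C/\mf m_{C,c_1}\mf m_{C,c_2}$ that lie in neither $U$ nor $U'$; this was dealt with in the proof of that corollary via the small-diagonal technique of this section, by factoring $D\to\mc Q$ through $C^3$, splitting off a globally generated line-bundle quotient $\mc F'$ from a general section of $\mc B_D$, and comparing $[L_0^{(3)}]\cdot[D]=2[L_0^{(2)}]\cdot[D]$ together with the numerical inequality $2\mu_0^{(3)}\ge\mu_0^{(2)}$. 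Once that corollary is in hand, the present theorem is the routine combination of the upper bound of Proposition \ref{nef cone of quot schemes } with the nefness of the three generators.
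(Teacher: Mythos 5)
Your proposal is correct and is essentially the paper's own argument: the paper obtains Theorem \ref{cone-d=3} precisely by combining Corollary \ref{nefness of the line bundle-d=3} (nefness of $[\mc O_{\mc Q}(1)]+\mu_0[L_0^{(3)}]$, with $[\theta_d]$ and $[L_0^{(3)}]$ nef as pullbacks along $\Phi$) with the upper bound of Proposition \ref{nef cone of quot schemes }. The only step the paper leaves implicit is the gonality verification ${\rm gon}(C)=\lfloor (g+3)/2\rfloor\le 3$ exactly for $2\le g\le 4$, which you carried out correctly, including the observation that the hypothesis fails at $g=5$.
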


\newcommand{\etalchar}[1]{$^{#1}$}



\end{document}